\newtheorem {lemma} {Lemma} [section]
\newtheorem{thm}{Theorem}
\newtheorem{theorem}{Theorem}
\newtheorem{prop}[lemma]{Proposition}
\theoremstyle{remark}
\newtheorem{remark}[lemma]{Remark}
\newcounter{nmdthmcnt}
\newcommand{\beqa}{\begin{eqnarray}}
\newcommand{\beq}{\begin{equation}}
\newcommand{\eeqa}{\end{eqnarray}}
\newcommand{\eeq}{\end{equation}}
\newcommand{\be}{\begin{equation}}
\newcommand{\ee}{\end{equation}}
\newcommand{\lb}[1]{\label{#1}}
\renewcommand{\Ref}[1]{(\ref{#1})}
\newcommand\kk{{\bf k}}
\newcommand\xx{{\bf x}}
\newcommand\yy{{\bf y}}
\newcommand\kf{\hat\kk}
\newcommand\tf{\hat\tT}
\newcommand\xf{\hat\xx}
\newcommand\yf{\hat\yy}
\newcommand\n{\nabla}
\newcommand{\HH}{\mathcal{H}}
\newcommand{\ta}{\tau}
\newcommand\om{\omega}
\newcommand\tT{{\bf t}}
\newcommand{\al}{\alpha}
\newcommand{\we}{\wedge}
\newcommand{\sig}{\sigma}
\newcommand{\ep}{\epsilon}
\newcommand{\lam}{\lambda}
\newcommand{\fr}{\frac}
\newcommand{\1}{\scriptscriptstyle 1}
\begin{document}
\title[]{Cohomogeneity one K\"ahler-Ricci solitons under a Heisenberg group action
and related metrics}
\author[]{Gideon Maschler and Robert Ream}
%\address{Dept. of Mathematics, Clark University, Worcester, MA 01610}
%\email{Aaazami@clarku.edu\,,\,Gmaschler@clarku.edu}
\address{Department of Mathematics and Computer Science\\ Clark University\\ Worcester, MA }
\email{Gmaschler@clarku.edu\,,\,Rream@clarku.edu}
%\email{gmaschler@clarku.edu}

\maketitle
\thispagestyle{empty}
\begin{abstract}
We show that integrability of an almost complex structure in complex dimension $m$
is equivalent, in the presence of an almost hermitian metric, to $m(m-1)$ equations
involving what we call shear operators. Inspired by this, we give an ansatz
for K\"ahler metrics in dimension $m>1$, for which at most $m-1$ of these shear equations
are non-trivial. The equations for gradient K\"ahler-Ricci solitons in this ansatz
are frame dependent PDEs, which specialize to ODEs under extra assumptions. Metrics
solving the latter system include a restricted class of cohomogeneity one metrics, and
we find among them complete expanding gradient K\"ahler-Ricci solitons under the action of
the ($2m-1$)-dimensional Heisenberg group, and some incomplete steady solitons.
We examine curvature properties and asymptotics for the former Ricci solitons.
In another special case of the ansatz we present, for $m=2$, a class of complete metrics
of a more general type which we call gradient K\"ahler-Ricci skew-solitons, which are
cohomogeneity one under the Euclidean plane group action. This paper continues research
started in \cite{mr,a-m2}.
\end{abstract}

%-----------------------
\section{Introduction}

Ricci solitons have been studied extensively in the last few decades. They were
introduced by Hamilton \cite{h} as solutions  to the Ricci flow that vary only by rescaling
of pull-backs of an initial metric under diffeomorphisms of the manifold. In the case of
gradient solitons, that initial metric $g$ satisfies an equation of the form
\[ \mathrm{Ric}+\n df=\lam g,  \]
where $\mathrm{Ric}$ is the Ricci curvature of $g$ and $\n df$ is the Hessian of a smooth function $f$ on the given manifold. The soliton constant $\lam$ is negative in the expanding case, and the corresponding solution to the flow exists for finite negative time and all positive time.
This constant is zero in the steady case.

Explicit examples of K\"ahler-Ricci solitons were given in \cite{ca}, and those are both expanding and $U(n)$-invariant. Other early and more recent examples appear in \cite{ko,ca0,chv,g,ptv,fik,yan,fw,cd}. Among metrics on homogeneous manifolds, there is a well-known correspondence between left-invariant Einstein metrics on solvable Lie groups and homogeneous Ricci solitons on their nilradicals \cite{la0}.

The main result of this paper is the construction of {\em explicit} complete expanding
gradient K\"ahler-Ricci solitons of cohomogeneity one under the action of the Heisenberg
group in any complex dimension $m>1$.
\begin{theorem}
There exists a complete expanding gradient K\"ahler-Ricci soliton with negative definite Ricci curvature on the manifold $M=\mathcal{H}_{2n+1}\times I$, where $\mathcal{H}_{2n+1}$ is the Heisenberg group in dimension $2n+1$, $n\ge 1$ and $I$ is an open interval. A quotient of $M$ by a discrete subgroup of
$\mathcal{H}_{2n+1}$ admits two ends, and the induced Ricci soliton is asymptotically conical at one end, and asymptotic to a metric of constant holomorphic sectional curvature at the other end. The Ricci flow associated with these Ricci solitons is of type III.
\end{theorem}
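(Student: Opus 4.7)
The plan is to realize the metric on $M=\mathcal{H}_{2n+1}\times I$ via the cohomogeneity one ansatz that the abstract advertises, reducing the gradient Kähler–Ricci soliton PDE to a system of ODEs in the interval variable $t\in I$. Concretely, I would write the metric using a $\mathcal{H}_{2n+1}$-invariant orthonormal coframe obtained from left-invariant $1$-forms $\{\theta^1,\ldots,\theta^{2n},\eta\}$ on the Heisenberg group (with $\eta$ dual to the center and $d\eta=\sum\theta^{2i-1}\wedge\theta^{2i}$), together with the interval $1$-form $dt$, so that the Kähler form reads
\[
\omega = a(t)^2\sum_{i=1}^{n}\theta^{2i-1}\wedge\theta^{2i} + b(t)\,dt\wedge\eta,
\]
with the complex structure pairing $dt$ with $\eta$ and pairing $\theta^{2i-1}$ with $\theta^{2i}$. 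Since the center of $\mathcal{H}_{2n+1}$ has different scaling from the horizontal distribution, this falls into the ansatz allowing exactly one non-trivial shear equation per horizontal plane, matching the $m-1$ bound; in particular, the integrability and Kähler conditions reduce to algebraic relations between $a$, $b$ and a primitive of $b$, which I would use to eliminate $b$ in favor of $a$ and recast everything in terms of a single height function.

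Next I would compute the Ricci tensor of this metric (which is diagonal in the chosen frame by symmetry) and impose $\mathrm{Ric}+\n df=\lambda g$ with $f=f(t)$ and $\lambda<0$. This produces a system of two or three second-order ODEs in $a(t)$, $b(t)$, $f(t)$, together with a first integral coming from the conservation law for gradient solitons (the trace-modified identity $S+|\nabla f|^2-2\lambda f=\mathrm{const}$). I expect to be able to integrate this system explicitly; after a convenient change of the variable $t$ (e.g.\ taking $a$ itself, or $s=a^2$, as the new parameter), the problem should collapse to a single first-order ODE that can be solved in closed form, giving explicit formulae for $a,b,f$ on a maximal interval $I$.

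The main technical step, and the one I expect to be the main obstacle, is verifying completeness and pinning down the two asymptotic geometries on the quotient by a cocompact lattice $\Gamma\subset\mathcal{H}_{2n+1}$. For this I would study the endpoints of the maximal interval $I$: at one end I expect $a(t)\to\infty$ and $b(t)$ tending to a constant multiple of $a$, so that after rescaling $t$ the metric approaches a cone $dr^2+r^2 g_\infty$ on a circle bundle over $\mathbb{C}^{n}$ (asymptotically conical); at the other end I expect $a,b\to 0$ at a rate producing a Heisenberg-type collapse of Bergman kind, and I would identify the limit by comparing the curvature tensor with the Bergman model to show it has constant holomorphic sectional curvature. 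Completeness follows by checking that $\int b\,dt$ diverges at both ends, and that no curvature blow-up occurs along the leaves. Two-endedness of $M/\Gamma$ is immediate once $I$ is confirmed to be an open interval bounded on both sides by complete metric ends.

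Finally, I would read off negative definiteness of $\mathrm{Ric}$ from the explicit formula (Ricci has the form $\mathrm{diag}(-A(t),\ldots,-A(t),-B(t),-B(t))$ with $A,B>0$ on $I$), and prove the type III property by showing the corresponding solution $g(s)=-2\lambda s\,\phi_s^* g$ of the Ricci flow, where $\phi_s$ is the flow of $\nabla f/(-2\lambda s)$, satisfies $|\mathrm{Rm}|_{g(s)}\le C/s$ for all $s>0$; this bound follows from the soliton identity $|\mathrm{Rm}|_{g(s)}=|\mathrm{Rm}|_g \circ\phi_s$ together with the uniform bound on $|\mathrm{Rm}|_g$ obtained from the explicit asymptotics above.
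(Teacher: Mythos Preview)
Your strategy is essentially the paper's: a cohomogeneity-one ansatz under $\mathcal{H}_{2n+1}$ with the complex structure pairing $dt$ with the contact direction $\eta$, reduction of the soliton equation to ODEs, an explicit first integral after a change of variable (the paper uses $\phi=a^2$ and obtains $\phi'=F_n(\phi)$ for an explicit $F_n$), then asymptotic analysis at both ends of the $\phi$-range, negative-definiteness of Ricci from the explicit formula, and type III from a uniform curvature bound. All of this matches.

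There is one genuine soft spot. You write that ``completeness follows by checking that $\int b\,dt$ diverges at both ends, and that no curvature blow-up occurs along the leaves.'' For a cohomogeneity-one metric with \emph{compact} principal orbits, divergence of the transverse arclength is indeed the criterion. Here the orbits are copies of the non-compact group $\mathcal{H}_{2n+1}$, so this is not enough: a finite-length curve could in principle escape in the orbit directions. The paper handles this by working in global coordinates $(x_i,y_i,z,q)$ and using Cauchy--Schwarz to bound each coordinate along a finite-length curve; the delicate one is the center coordinate $z$, whose coframe $\zeta=\sum x_i\,dy_i-dz$ is not closed, and the bound on $z$ requires first knowing that $x_i$ and $y_i'$ are controlled. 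Alternatively (and closer in spirit to your sketch), one may pass to a quotient by a discrete central subgroup so that the $z$-direction becomes a circle, prove completeness there from the radial-integral divergence plus compactness of the remaining factors, and then lift to the universal cover. Either route works, but ``$\int b\,dt=\infty$ plus bounded curvature'' by itself does not close the argument.
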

\vspace{-.07in}
See Theorem~\ref{sol-Heis2} and formulas \Ref{explicit} for the metric expression,
subsection~\ref{props} for the Ricci curvature, subsection~\ref{asymp} for the asymptotics,
and subsection~\ref{singul} for the statement on the type, which is meant in the sense of \cite{hm1}.
Note that the asymptotics are in analogy with the Ricci soliton found in \cite{r} in dimension three,
which is not known explicitly. Additionally, the occurrence of a non-conical end places these Ricci solitons just outside the class recently classified in \cite{cds}. It would thus be of interest to prove a uniqueness statement characterizing them, thus extending either \cite{cd} or \cite{r}, the latter being based on sectional curvature, see below. We hope to pursue this problem in the future.

We also examine other properties of these solitons. Apart from computing one of the invariants of the
associated Ricci flow, we examine various associated curvatures. Their scalar curvature is (negative)
nonconstant, and bounded. The sectional curvatures are bounded.
We believe that the sectional curvature of a plane $\al$ satisfies the sharp bounds
\[
-\fr2{m+1}<\mathrm{Sec}(\al)<0,
\]
where $m$ is the complex dimension. {\em We give a proof of this claim for $m=2$}.
See section~\ref{singul}. Note that for the Ricci solitons in \cite{r} there are similar
bounds, with the lower one being $-1/4$.

%An appropriate quotient of these spaces admits two ends, and the solitons are asymptotic to a cone %metric over a left invariant metric on one
%end, and to a cusp of constant holomorphic sectional curvature at the other end.

In the case of cohomogeneity one metrics, many compact and complete Ricci solitons, both K\"ahler and non-K\"ahler, were found relatively recently in \cite{d-w, bdw, bdgw, w1, w2}, under compact group actions. Aside from being a non-compact group, the case of the Heisenberg group presented here features K\"ahler-Ricci solitons with a group action having no singular orbits.

Our approach to understanding these Ricci solitons stems from a general framework in which we also show existence of incomplete steady K\"ahler-Ricci solitons, and in dimension four, complete cohomogeneity one metrics under the Euclidean plane group $E(2)$ of a type we call gradient K\"ahler-Ricci
skew-solitons. The definition of the latter includes that of a gradient K\"ahler-Ricci soliton as a
special case. We employ it on manifolds that do not admit gradient K\"ahler-Ricci solitons of the
type we consider in this paper for prosaic reasons: the candidate for a soliton vector field we examine
is not holomorphic. To our knowledge, this natural generalization of a K\"ahler-Ricci soliton does not seem to have been considered in the literature. See \cite{la} for related notions that are discussed in
subsection~\ref{skew-sol0}. Our theorem may be stated as follows.
\begin{theorem}
Complete expanding gradient K\"ahler-Ricci skew-solitons of cohomogeneity one under
an $E(2)$ action with one singular orbit exist on $4$-manifolds admitting diagonal
metrics for such an action.
\end{theorem}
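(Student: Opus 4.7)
The plan is to realize the desired metric within the ansatz developed earlier in the paper, specialized to complex dimension $m=2$ so that at most one shear equation is nontrivial, and then reduced to cohomogeneity one by imposing invariance under $E(2)=\mathbb{R}^{2}\rtimes SO(2)$. First I would fix a $4$-manifold of the form $N\times I$ with $N$ a principal $E(2)$-orbit, and close up one end by a collapse $N\to\Sigma$ to a $2$-dimensional singular orbit; this model is determined once the isotropy type at the singular orbit is chosen. On the principal part I would write a diagonal metric
\[
g=dt^{2}+a(t)^{2}\theta_{1}^{2}+b(t)^{2}\theta_{2}^{2}+c(t)^{2}\theta_{3}^{2},
\]
in terms of a left-invariant coframe $\{\theta_{i}\}$ on $E(2)$ adapted to the split into two translation directions and one rotation direction, and a compatible almost complex structure $J$ pairing $dt$ with one of the $\theta_{i}$ so that the ansatz of the paper is available.

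I would next impose the K\"ahler condition. For $m=2$ only one shear equation is nontrivial, so together with $d\om=0$ the system should reduce to a single algebraic constraint among $a,b,c$ and the soliton potential. It is at this point that I expect the natural candidate soliton vector field $\n f$ to fail to be holomorphic; this is the obstruction that forces the statement to use \emph{skew-}solitons rather than ordinary K\"ahler-Ricci solitons. I would then substitute the diagonal ansatz into the skew-soliton equation ${\rm Ric}+\n df=\lam g$ as modified in subsection~\ref{skew-sol0}, collapsing the PDE into an ODE system in $t$ for $(a,b,c,f)$ with $\lam<0$ in the expanding case.

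The technical core is the analysis of this ODE system together with its boundary conditions. At the singular orbit $t=0$ smoothness forces one of the functions to vanish linearly while the others remain positive and even in $t$; as in the cohomogeneity one soliton constructions \cite{d-w,bdw,bdgw,w1,w2}, I would resolve this singular initial value problem via a power-series/Picard fixed point argument to obtain a unique germ of a smooth solution. The hardest step is to prove that this germ extends to all $t\in[0,\infty)$ without collision, and that $a,b,c$ diverge so that the $t$-parameter exhausts the manifold. I would attack this by constructing an invariant region in the phase space of $(a,b,c,\dot a,\dot b,\dot c,\dot f)$ together with a monotone Lyapunov quantity adapted to the expanding sign of $\lam$. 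The interplay between this quantity and the extra skew terms, which break some symmetries present in the ordinary soliton case, is the main obstacle.

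Once global existence and asymptotic growth of $a,b,c$ are in hand, completeness of $g$ follows from the observation that $dt$ already contributes infinite length on $[0,\infty)$ and that the closure at $t=0$ is smooth by construction of the germ. The $E(2)$-invariance with the prescribed isotropy ensures the action is cohomogeneity one with exactly one singular orbit, and the expanding sign of $\lam$ certifies that the resulting skew-soliton is genuinely expanding, yielding the conclusion.
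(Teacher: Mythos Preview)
Your strategy differs substantially from the paper's, and the divergence matters at the technical core.

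The paper does \emph{not} solve a singular initial value problem at the bolt and then push forward. Instead, in the gauge $g=(abc)^2dt^2+a^2\sigma_1^2+b^2\sigma_2^2+c^2\sigma_3^2$ the system \Ref{E2ODEa}--\Ref{E2ODEc} is an autonomous polynomial ODE with equilibria $(q,0,q)$, $q>0$; the solution is produced as the \emph{unstable curve} of such an equilibrium via the Center Manifold Theorem, with domain $(-\infty,\eta)$ and the bolt sitting at $t=-\infty$. The case analysis in Proposition~\ref{prop:solutions} then shows that every complete solution must be one of these unstable curves, by proving that trajectories outside the strip $0\le c^2-a^2\le 2a^2(b^2+\ep(b))$ reach a finite $\xi$ with $\int abc\,dt<\infty$. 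The decisive estimates for the far end are phase-plane arguments on the reduced quantities $a/c$ and $ac$ as functions of $b$ (nullcline bounds, comparison with the curves $a/c=\sqrt{k^2/(k^2+b^2)}$, Gr\"onwall), leading to \Ref{eta_dist}. Smoothness at the bolt is then verified separately via the Verdiani--Ziller criteria.

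Your proposal has two concrete gaps. First, the global extension step is left at the level of ``invariant region plus a monotone Lyapunov quantity adapted to $\lambda<0$''; no such quantity is identified, and in fact the paper does not use one---the argument hinges on the specific nullcline geometry of the $(a/c,b)$ system, and the expanding sign enters only through the $2(ab)^2$ term in \Ref{E2ODEc}, not via any energy-type monotonicity. Second, you omit the free function $\ep(b)$ of \Ref{ep}: in the paper the skew-soliton potential is \emph{prescribed} through $f'=2\ep(b)a^2$, and the theorem produces an infinite-dimensional family indexed by $\ep$, not a single solution coming out of a power-series germ. (A smaller point: the equation you write, $\mathrm{Ric}+\nabla df=\lambda g$, is the ordinary soliton equation; the skew-soliton condition \Ref{skew-sol} replaces $\nabla df$ by its $J$-skew part, which is precisely what allows a non-holomorphic $\nabla f$.) Your singular-IVP-at-the-bolt approach could in principle be made to work, but it would still need the phase-plane estimates of Proposition~\ref{prop:estimates} to reach $r=\infty$, so it does not bypass the hard step.
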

See Theorem~\ref{complete} for a precise statement.

\vspace{.1in}
We now describe the relation of this work to our earlier paper \cite{mr}, and to \cite{a-m2}. The
former paper, which was greatly influenced by work of Dancer and Strachan \cite{d-s1}, explored
diagonal K\"ahler-Einstein metrics in dimension four under a cohomogeneity one action of
unimodular Lie groups of dimension three, the so-called Bianchi type A class. A complete such metric was shown to exist for the Euclidean plane group, while \cite{a-m2} contained a similar result for the three dimensional Heisenberg group. Both of these results were obtained via ODE methods.

Now in \cite{mr} we developed a more general framework within which these $m=2$ cohomogeneity one
metrics are special cases, and another result there gives local existence of K\"ahler Ricci-flat
metrics that are determined by PDEs (more precisely, generalized frame-dependent PDEs), and are not of cohomogeneity one. In the current paper, this framework is extended to all complex dimensions $m>1$. However, while for $m=2$, diagonal cohomogeneity one metrics for all unimodular three dimensional Lie groups are special cases of our general ansatz, its application to cohomogeneity one metrics in dimension $m>2$ is limited. In fact, only the Heisenberg group action fits as a special case when trying to construct expanding solitons of the type we consider. But just as in the $m=2$ case, for $m>2$ one can ask whether more solutions will result from this ansatz with frame-dependent PDEs replacing the above mentioned ODEs. We leave the exploration
of such questions to future work.

Here is one main ingredient of the said framework. As is well-known, integrability of an almost complex structure $J$ is equivalent to the vanishing of the Nijenhuis tensor. In the presence of an almost hermitian metric, we fix some orthogonal decomposition of the tangent bundle into rank $2$ $J$-invariant subbundles. Extending the ideas of \cite{a-m2} and \cite{mr}, we show Nijenhuis vanishing can be given equivalently in the form of $m(m-1)$ so-called shear equations, relating the complex structure with
shear operators, each of which acts on a pair of sections, one from each of two distinct such
subbundles. The utility of this relates to matters of integrability, as one can consider cases in which
many of these shear operators are simply the trivial zero operator. The number of non-zero shear operators in the integrability equations of a given complex structure can be used as a measure of its
complexity. In the scenario described in this paper, at most $m-1$ of the shear equations are
nontrivial. In \cite{mr}, in which $m=2$, there was at most one nontrivial shear equation, and subsequent metrics constructed were simpler in case there were none. We expect this approach to integrability to have other applications.

As the shear operator in a local frame of each rank two subbundle can be given by expressions involving the metric and Lie brackets, we proceed to define our most general ansatz through Lie bracket relations of frame vector fields. Those are designed so that the above $m-1$ shear equations hold, and also so that the metric making this frame orthonormal is K\"ahler. For the latter to hold $m$ additional Lie bracket relations are assumed between frame vector fields forming sections of the same rank $2$-subbundle. An analysis of this additional structure is given
in the appendix.

In Section~\ref{general} we describe the framework (Theorem~\ref{Nij0}) and overall ansatz, and
give the definition of skew-solitons. Section~\ref{gen-sol} gives the generalized PDEs for
skew-solitons in this ansatz, their specialization to ODEs and further conditions required for a
skew-soliton in this ansatz to be an actual K\"ahler-Ricci soliton (Prop.~\ref{skew-sol-lam-ne0}).
In Section~\ref{sec:coho} we show how a certain class of cohomogeneity one metrics fits with our ansatz and derive the soliton and skew-soliton equations in this case (Prop.~\ref{coho-one-sols}).
In Section~\ref{Heisen-sols}, which is our main section, we prove the existence of complete
expanding gradient K\"ahler-Ricci solitons under a Heisenberg group action, show they are complete
(Theorem~\ref{sol-Heis2}), and describe some of their properties, including those related to curvature,
the singularity type for the associated Ricci flow and the asymptotics. In Section~\ref{steady-sols} we
describe steady solitons arising from the specialization of the ansatz to ODEs.
Section~\ref{E2-skews} is devoted to the demonstration of existence of complete K\"ahler-Ricci skew-solitons in complex dimension two under the action of the Euclidean plane group (Theorem~\ref{complete}), following a method employed in \cite{mr}. The appendix contains an analysis of the closedness of the K\"ahler form from the point of view of the framework of Section~\ref{general}.

%----------------------
\section{The general framework}\lb{general}

\subsection{Shear and integrability}\lb{sec:sh}

Let $(M,g,J)$ be an almost hermitian manifold of real dimension $2m$.
Assume a decomposition of the tangent bundle as an orthogonal sum of
rank two $J$-invariant subbundles
%a local oriented orthonormal frame
%denoted $\{e_i\}$.
%\[
%\{e_i\}=\{\kk, \tT=J\kk, \xx, \yy=J\xx\}.
%\]
%In the frame domain, we have an orthogonal decomposition of the tangent bundle:
\be\lb{decomp}
TM=\bigoplus_{i=1}^m\HH_i. %\quad \text{with ${\HH_i}=\mathrm{span}(e_{2i-1},e_{2i})$.}
\end{equation}
%\[
%\text{$TM=\VV\oplus\HH$, \quad with ${\VV}=\mathrm{span}(\kk,\tT),\quad {\HH}=\mathrm{span}(\xx,\yy)$.}
%\]
Let $\pi_{\HH_j}:TM\to{\HH_j}$ denote the orthogonal projection.
%For a vector field $X\in\Gamma(\HH_i)$,
Consider the operator $\pi_{\HH_j}\circ \n:\Gamma(\HH_j)\times\Gamma(\HH_i)\to\Gamma({\HH_j})$, $j\ne i$, in which we restrict the Levi-Civita covariant derivative $\n$ of $g$ to sections of $\HH_j\times\HH_i$.
Define the  {\em shear operator} of such a pair of subspaces by
\[
S^{ji}\ :=\ \mathrm{Sym}^0[\pi_{\HH_j}\circ \n|_{\HH_j\times\Gamma(\HH_i)}],
\]
where $\mathrm{Sym}^0$ denotes the trace-free symmetric component
with respect to the linear action of $\n X$ on $\HH_j$, for any fixed $X\in\Gamma(\HH_i)$.

%Let $\UU$ stand for either $\VV$ or $\HH$, and $\pi_{\UU^\perp}:TM\to{\UU^\perp}$ denote the orthogonal %projection.
%For a vector field $X\in\Gamma(\UU)$, consider the operator $\pi_{\UU^\perp}\circ \n %X|_{\UU^\perp}:\Gamma(\UU^\perp)\to\Gamma({\UU^\perp})$, where $\n$ is the Levi-Civita covariant
%derivative of $g$. Define the {\em shear operator} of $X$ by
%\[
%\text{$S_X$\ :=\ trace-free symmetric part of $\pi_{\UU\perp}\circ \n X|_{\UU^\perp}$.}
%\]
See \cite{a-m} for background on the relation to the shear operator in general relativity.

Our purpose here is to give a condition equivalent to the integrability of $J$ in terms of
shear operators.
\begin{thm}\lb{Nij0}
Given the above set-up, the almost complex structure $J$ is integrable %in the frame domain
if and only if for all $i=1,\ldots m$ and any $j\ne i$
\be\lb{Nij}
J S^{ji}_vw=S^{ji}_v{Jw},\quad \text{for all $w\in\Gamma(\HH_i)$ and $v\in\HH_j$.}
%_{e_{2i-1}}=S_{e_{2i}} %\text{ on ${\HH_j}$.}\lb{Sj}\\
%\mathrm{ii})&\ \ J S_{\kk}=S_{\tT} \text{ on ${\HH}$.}
%\mathrm{and}\quad \mathrm{ii})\ J\n^o\kk_+=\n^o\!J\kk_+ \text{ on ${\HH}$.}
\end{equation}
\end{thm}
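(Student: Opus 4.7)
The plan is to compute the Nijenhuis tensor $N_J(X,Y) = [JX,JY] - J[JX,Y] - J[X,JY] - [X,Y]$ in a frame adapted to the decomposition $TM = \bigoplus_{l=1}^m\HH_l$ and identify the resulting obstruction with the shear equations. First, by tensoriality, the $J$-equivariance $N_J(JX,Y) = -JN_J(X,Y)$, and antisymmetry of $N_J$, it suffices to check $N_J(v,w) = 0$ for $v\in\HH_j$ and $w\in\Gamma(\HH_i)$ with $j\ne i$: on a pair $(X_l, JX_l)$ inside the same rank-two $J$-invariant summand $\HH_l$, a direct computation using $J^2 = -I$ and skew-symmetry of Lie brackets gives $N_J(X_l, JX_l) = 0$ automatically, so pairs from a single $\HH_l$ contribute nothing.

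Next I would convert to covariant form. Using torsion-freeness of $\nabla$ together with the identity $\nabla_Z(JW) = (\nabla_Z J)W + J\nabla_Z W$, a routine manipulation yields
\[
N_J(v, w) = (\nabla_{Jv}J)w - (\nabla_{Jw}J)v - J(\nabla_v J)w + J(\nabla_w J)v,
\]
and since each $\HH_l$ is $J$-invariant one has $J\pi_{\HH_l} = \pi_{\HH_l}J$, so I would project this identity onto each summand separately.

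The $\HH_j$-projection is where the shear appears. For fixed $w\in\Gamma(\HH_i)$ the endomorphism $v\mapsto\pi_{\HH_j}\nabla_v w$ of the two-dimensional space $\HH_j$ decomposes as $S^{ji}_w + c(w)\,\mathrm{Id}_{\HH_j} + s(w)\,J|_{\HH_j}$, a sum of trace-free symmetric (the shear), trace, and antisymmetric parts; crucially, on a rank-two Euclidean space any antisymmetric operator is automatically a scalar multiple of $J|_{\HH_j}$. Both $\mathrm{Id}_{\HH_j}$ and $J|_{\HH_j}$ commute with $J$, so those two parts contribute nothing to a $J$-commutator-type obstruction, and the $\HH_j$-component of $N_J(v,w) = 0$ therefore collapses exactly to $JS^{ji}_v w = S^{ji}_v Jw$. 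The $\HH_i$-projection, by the symmetric argument with the roles of $i$ and $j$ swapped, yields the shear equation for the ordered pair $(i, j)$.

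The principal technical obstacle, for $m > 2$, is controlling the remaining projections $\pi_{\HH_k}N_J(v,w)$ with $k\notin\{i,j\}$, which at first sight are not directly visible to the shear equations. I would handle them via Koszul's formula: when $u, v, w$ lie in three pairwise-distinct orthogonal summands all three directional-derivative terms in Koszul vanish, so $g(\nabla_v w, u)$ is a purely Lie-bracket expression, and the $\HH_k$-component of $N_J(v,w)$ can be rewritten entirely in terms of bracket projections across the three subbundles. Exploiting the antisymmetry and $J$-equivariance of $N_J$, together with the shear equations already in force for the pairs $(k,i)$ and $(k,j)$, should force the vanishing of this cross-component; this final combinatorial bookkeeping across triples of distinct summands is where I expect the bulk of the work to concentrate.
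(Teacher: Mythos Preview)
Your reduction to pairs with $j\ne i$ and the covariant rewriting of $N_J$ are sound; the $\HH_j$- and $\HH_i$-projections do collapse to the shear equations, and this is essentially what the paper's sketch defers to \cite{a-m} (a four-dimensional computation). The genuine obstacle is exactly where you locate it: the projections $\pi_{\HH_k}N_J(v,w)$ for $k\notin\{i,j\}$. But your proposed resolution cannot work. The shear operator $S^{ji}$ is built from the $\HH_j$-components of brackets $[\HH_i,\HH_j]$ (this is visible in the bracket formula \Ref{sh-coef} for $\sigma_1,\sigma_2$); no shear equation, for any ordered pair, constrains the $\HH_k$-component of $[\HH_i,\HH_j]$ when $k\notin\{i,j\}$. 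A dimension count already confirms this: the shear equations impose $2m(m-1)$ real conditions, while $N_J$ has $m^2(m-1)$ independent real components, and these differ once $m>2$.

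Concretely, take the six-dimensional nilpotent Lie group with left-invariant orthonormal frame $e_1,\dots,e_6$, the single nonzero bracket $[e_1,e_3]=e_5$, and $J$ defined by $Je_{2l-1}=e_{2l}$. Every shear operator vanishes (the only nontrivial bracket lands in a third subbundle, so all $\HH_j$-projections of the relevant brackets are zero), yet $N_J(e_1,e_3)=-[e_1,e_3]=-e_5\ne 0$. Thus the ``if'' direction of the statement fails for $m\ge3$ as written, and no amount of bookkeeping with Koszul and the shear equations for $(k,i)$, $(k,j)$ will close the gap. The paper's sketch does not address this issue either: it cites a computation valid only for $m=2$, and its sole application (Proposition~\ref{kah}) imposes bracket relations that kill these cross-terms directly.
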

{\em Proof sketch.} The issue is local, so consider a frame domain for an orthonormal
frame $\{e_i\}$ such that locally $\HH_i=\mathrm{span}(e_{2i-1},e_{2i})$.
It is enough to show the Nijenhuis tensor $N$ vanishes on pairs of frame fields.
Its defining formula clearly shows that it vanishes if both frame fields are in $\HH_i$. Because
$N(a,b) = JN(a,Jb)$ and $N$ is antisymmetric, if $j\ne i$ it is enough to check under what
condition it vanishes on a pair $e_{2i-1}$, $e_{2j-1}$. That calculation proceeds similarly to
\cite[Theorem 1]{a-m}, and will be omitted.
\qed

We point out that the calculation just alluded to relies on the following expression of the matrix
corresponding to the shear operator, evaluated on a vector field $X\in\Gamma(H_i)$ in a local oriented orthonormal frame $\{e_{2j-1},e_{2j}\}$ on $\HH_j$.
\[
[S^{ji}X]_{e_{2j-1},e_{2j}}=
\begin{bmatrix}%{cc}
        - \sigma_1 & \sigma_2\\
        \sigma_2 & \sigma_1\\
      \end{bmatrix},
\]
with {\em shear coefficients}:
\be\lb{sh-coef}
\begin{aligned}
2\sigma_1\ &:=\
% g(\n_\yy X,\yy) - g(\n_\xx X,\xx)=
 \ \ g([X,e_{2j-1}],e_{2j-1})-g([X,e_{2j}],e_{2j}),\\
2\sigma_2\ &:=\
% g(\n_\yy X,\xx) + g(\n_\xx X,\yy)=
 -g([X,e_{2j-1}],e_{2j}) - g([X,e_{2j}],e_{2j-1}).
\end{aligned}
\end{equation}

Note that to check integrability via Theorem~\ref{Nij0}, one has to verify $m(m-1)$
operator equations. Simpler special cases occur if some of the shear operators $S^{ji}$
happen to be the zero operator, in which case the corresponding equation~\Ref{Nij} holds
automatically. In the next subsection we will define an ansatz for K\"ahler metrics for which
$S^{ji}=0$ for all pairs $ji$ except for those of the form $ji_0$ for {\em one particular index}
$i_0$. Thus there will only be $m-1$ non-trivial equations involving shear operators.
All metrics discussed in this paper will be special cases of this ansatz.

%One simple case in which integrability holds by Theorem \ref{Nij0}
%is when all the shears vanish: $S_{e_i}=0$, $i=1,\ldots,4$.
%We refer to this as the shear-free case. Our main results concern
%cases which are not shear-free.

\subsection{K\"ahler frame system}\lb{construct}

Let $(M,g)$ be a Riemannian manifold of dimension $2m=2n+2$ admitting an orthonormal frame $\{e_j\}_{j=1}^{2m}=\kk, \tT, \{\xx_i, \yy_i\}_{i=1}^n$,
defined over an open $U\subset M$, which satisfies the following Lie bracket relations:
$[\xx_i,\xx_j]$, $[\xx_i,\yy_j]$, and $[\yy_i,\yy_j]$ vanish whenever $i\ne j$, while
%\be\lb{brack}
\begin{align}
&[\kk,\tT]=L(\kk+\tT),\qquad &&[\xx_i,\yy_i]=N_i(\kk+\tT),\lb{brack1}\\
&[\kk,\xx_i]=A_i\xx_i+B_i\yy_i,\qquad  &&[\kk,\yy_i]=C_i\xx_i+D_i\yy_i,\lb{brack2}\\
&[\tT,\xx_i]=E_i\xx_i+F_i\yy_i,\qquad  &&[\tT,\yy_i]=G_i\xx_i+H_i\yy_i,\lb{brack3}
\end{align}%\end{equation}
for smooth functions $A_i, B_i, C_i, D_i, E_i, F_i, G_i, H_i, L, N_i$ on $U$ such that
%\be\lb{rels}
\begin{align}
A_i&-D_i=F_i+G_i,\qquad B_i+C_i=H_i-E_i,\lb{rels1}\\
N_i&=A_i+D_i=-(E_i+H_i)\lb{rels2}
\end{align}
for $i=1,\ldots n$.
%\end{equation}
%Assume the frame $\kk,\tT,\xx,\yy$ is orthonormal with respect
%to a Riemannian metric $g$, and
Define an almost complex structure
$J=J_{g,\{e_j\}}$ by linearly extending the relations $J\kk=\tT$, $J\tT=-\kk$, $J\xx_i=\yy_i$
and $J\yy_i=-\xx_i$, $i=1,\ldots n$.

Another proof of the following proposition, which employs the decomposition
\Ref{decomp} in a different way, is outlined in the appendix.
\begin{prop}\lb{kah}
(M,g,J) defined as above gives a K\"ahler structure on $U$.
\end{prop}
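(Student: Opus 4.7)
The plan is to verify the two defining conditions of a K\"ahler structure: (i) integrability of $J$ (equivalently $N_J=0$), and (ii) closedness of the K\"ahler form $\omega(X,Y)=g(JX,Y)$. Compatibility of $J$ with $g$ is automatic from the construction, since $J$ maps the orthonormal frame vectors to orthonormal frame vectors. I will set $\HH_1=\mathrm{span}(\kk,\tT)$ and $\HH_{i+1}=\mathrm{span}(\xx_i,\yy_i)$ for $i=1,\ldots,n$, so that the decomposition \Ref{decomp} holds with $m=n+1$.

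For integrability, I will apply Theorem~\ref{Nij0} and go through the three families of shear operators $S^{ji}$ prescribed by the decomposition. First, when both $i,j\ge 2$ and $i\ne j$, the bracket relations $[\xx_a,\xx_b]=[\yy_a,\yy_b]=[\xx_a,\yy_b]=0$ (for $a\ne b$) ensure that all shear coefficients in \Ref{sh-coef} vanish, so $S^{ji}=0$. Second, for $S^{1j}$ with $j\ge 2$, the brackets $[\xx_{j-1},\kk]$, $[\xx_{j-1},\tT]$, $[\yy_{j-1},\kk]$, $[\yy_{j-1},\tT]$ given by \Ref{brack2}--\Ref{brack3} all lie in $\HH_j$ and hence have no component in $\HH_1$; again the coefficients in \Ref{sh-coef} vanish and $S^{1j}=0$. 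The genuinely nontrivial cases are $S^{j1}$, $j=2,\ldots,m$, giving the expected $m-1$ shear equations. For such $j$, writing $X=a\kk+b\tT\in\Gamma(\HH_1)$ and using \Ref{brack2}--\Ref{brack3}, the formulas \Ref{sh-coef} become
\begin{align*}
2\sigma_1(X)&=a(A_{j-1}-D_{j-1})+b(E_{j-1}-H_{j-1}),\\
2\sigma_2(X)&=-a(B_{j-1}+C_{j-1})-b(F_{j-1}+G_{j-1}).
\end{align*}
A short matrix computation shows that the shear equation \Ref{Nij} for $S^{j1}$ is equivalent to $\sigma_1(JX)=\sigma_2(X)$ and $\sigma_2(JX)=-\sigma_1(X)$, and substituting $JX=-b\kk+a\tT$ and comparing coefficients of $a$ and $b$ yields precisely the two identities in \Ref{rels1}. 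Thus integrability is encoded in \Ref{rels1}, and Theorem~\ref{Nij0} gives $N_J=0$.

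For $d\omega=0$, I will use that with an orthonormal frame $\omega(e_i,e_j)$ is constant, so
\[
d\omega(e_a,e_b,e_c)=-\omega([e_a,e_b],e_c)+\omega([e_a,e_c],e_b)-\omega([e_b,e_c],e_a).
\]
Running through all frame triples, everything vanishes automatically except the triples $(\kk,\xx_i,\yy_i)$ and $(\tT,\xx_i,\yy_i)$; indeed, any triple containing two vectors from distinct $\HH_k$ with $k\ge 2$ yields only brackets in one such $\HH_k$, which are $\omega$-orthogonal to the remaining frame vector. On $(\kk,\xx_i,\yy_i)$, using \Ref{brack1}--\Ref{brack2}, one obtains $d\omega=-A_i-D_i+N_i$, which vanishes iff $N_i=A_i+D_i$; analogously $(\tT,\xx_i,\yy_i)$ yields $N_i=-(E_i+H_i)$. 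These are exactly the relations \Ref{rels2}.

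Combining the two steps, the bracket conditions \Ref{rels1} and \Ref{rels2} ensure that $J$ is integrable and $\omega$ is closed, so $(U,g,J)$ is K\"ahler. The only mildly delicate point is the matrix-level identification of \Ref{Nij} with the scalar relations in \Ref{rels1}, but this is a purely algebraic verification once the formula for the shear matrix is in hand; everything else is bookkeeping among the bracket relations.
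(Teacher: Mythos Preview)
Your proof is correct. The integrability half matches the paper's argument essentially verbatim (modulo indexing): both identify the vanishing shear operators $S^{ji}$ for $i,j\ne 0$ and $S^{0i}$, and reduce the remaining $m-1$ shear equations $S^{j0}$ to the two scalar relations \Ref{rels1}.

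Where you diverge is in establishing the K\"ahler condition. The paper's main proof does not check $d\omega=0$; instead it \emph{defines} a candidate connection by the formulas \Ref{conn}, extends it by torsion-freeness and $\nabla J=0$, and then verifies metric compatibility, concluding that this is the Levi-Civita connection and hence $J$ is parallel. Your route---checking $d\omega=0$ on frame triples and invoking the standard equivalence ($N_J=0$ and $d\omega=0$) $\Leftrightarrow$ $\nabla J=0$---is exactly the alternative argument the paper sketches in its appendix, where the triples $(\kk,\xx_i,\yy_i)$ and $(\tT,\xx_i,\yy_i)$ are singled out as the only nontrivial ones and shown to yield \Ref{rels2}. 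Your approach is more elementary and self-contained; the paper's approach has the side benefit of producing the explicit covariant-derivative formulas \Ref{conn}, which are reused immediately afterward to compute the Ricci form in \Ref{ric}--\Ref{Ric-frame}.
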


\begin{proof}
$J$ clearly makes $g$ into an almost hermitian metric.
To see that $J$ is integrable, we verify the conditions of Theorem~\ref{Nij0}.
Changing slightly the indexing of the previous subsection,
let $\HH_0=\mathrm{span}(\kk,\tT)$, $\HH_i=\mathrm{span}(\xx_i, \yy_i)$, $i=1,\ldots n$,
with the frame indexing beginning with $\kk=e_{\scriptscriptstyle{-1}}$, $\tT=e_{\scriptscriptstyle{0}}$.
In view of \Ref{sh-coef}, the vanishing Lie bracket relations clearly show that
the shear operators $S^{ji}=0$ if $i\ne 0$, $j\ne 0$ and $i\ne j$. Additionally
$S^{0i}=0$ for all $i=1,\ldots, m-1$ as well, since \Ref{sh-coef}, with $X=\xx_i$ or $X=\yy_i$
and $e_{2j-1}=\kk$, $e_{2j}=\tT$ yields zero shear coefficients for $X$ in view of \Ref{brack2}-\Ref{brack3} and the orthonormality of our frame. Finally, to check \Ref{Nij} for
$S^{j0}$ we note that in terms of shear coefficients this equation takes the form
%are expressed in terms of shear coefficients
%as $\sig_1^\yy=\sig_2^\xx$, $\sig_2^\yy=-\sig_1^\xx$,
$\sig_1^\tT=\sig_2^\kk$, $\sig_2^\tT=-\sig_1^\kk$,
%Using \Ref{sh-coef} and the orthonormality of our frame,
%by \Ref{brack2}, \Ref{brack3}, the first pair of these
%equations each takes the form $0=0$,
and these hold as they are equivalent, by \Ref{sh-coef}, to the assumed relations \Ref{rels1}.

%Then $J$ is of the type
%considered in \cite{a-m}, while
%the last two lines of \Ref{brack} along with the first line
%of \Ref{rels} guarantee that the conditions of Theorem~1 in
%\cite{a-m} hold, so that $J$ is integrable.
%We will mostly be interested in the case where at least one
%of the equations in the first line of \Ref{rels}
%is not of the form $0=0$.
To show that $g$ is K\"ahler,
define a connection on $U$ by first setting
\be\lb{conn}
\n_\kk\kk=-L\tT,\quad  \n_{\xx_i}\xx_i=A_i\kk+E_i\tT,\quad
\n_{\xx_i}\kk=-A_i\xx_i+E_i\yy_i,\quad \n_{\xx_i}\xx_j=0, i\ne j,
%&&\n_\kk\xx=(H-C)\yy.
\end{equation}
and then having all other covariant derivative expressions on frame fields
determined by the requirement that $\n$ be torsion-free and make $J$ parallel
(here the definition of $J$ and relations \Ref{brack1}-\Ref{rels2} are used
repeatedly).
It is easily checked that $\n$, thus defined, is compatible with the metric $g$, so that
it is its Levi-Civita connection and hence $J$ is $g$-parallel. This completes the proof.
\end{proof}

One additional object comes for free with the K\"ahler structure in Proposition~\ref{kah}.
Namely, the Lie bracket relations \Ref{brack1}-\Ref{brack3} imply that the distribution spanned by
$\kk+\tT$, $\xx_i$ and $\yy_i$, $i=1,\ldots n$ is integrable. Since this distribution is
orthogonal to $\kk-\tT$, while the latter vector field has constant length and is easily seen to
have geodesic flow, it follows that it is locally a gradient (cf. \cite[Cor. 12.33]{onel}). Thus, there exists
a smooth function $\ta$ defined in some open set $V\subset U$, such that
\be\lb{grad}
\kk-\tT=\n\ta.
\end{equation}
As in \cite{mr}, in searching for distinguished metrics we will assume that
the functions $A_i,\ldots H_i, N_i$, $i=1,\ldots n$ and $L$  are each a composition with $\ta$ of a smooth real-valued function defined on the image of $\ta$, and will abuse notation by denoting the latter functions
%defined on the image of $\ta$
by the same respective letters as the former. In the case of non-Ricci-flat K\"ahler-Einstein
metrics of dimension four that fit the ansatz of Prop.~\ref{kah}, we have shown in \cite{mr} that
this assumption always holds necessarily. We will not attempt to extend that result in this paper.

\subsection{The Ricci form}

The Ricci form of the K\"ahler metric $g$ in Proposition~\ref{kah} is computed
as follows. Denote by $w_{\scriptscriptstyle{0}}=\kk-i\tT$, $w_i=\xx_i-i\yy_i$, $i=1,\ldots n$ the
corresponding complex-valued frame, and compute the complex valued $1$-forms
$\Gamma_i^j$ for which $\n w_i=\Gamma_i^j\otimes w_j$, where here $\n$
denotes the obvious complexification of the Levi-Civita connection of $g$ and the
summation convention was used. The formulas are deduced by computing the components
$\n_{e_\ell} w_i$, where $e_\ell$ stands for one of the frame fields, using the covariant
derivative frame formulas for the Levi-Civita connection $\n$, given in the proof of Proposition~\ref{kah}. The $1$-forms $\Gamma_{\scriptscriptstyle{0}}^{\scriptscriptstyle{0}}$, $\Gamma_j^j$, $j=1,\ldots n$ resulting
from this calculation are given by
\[
\Gamma_{\scriptscriptstyle{0}}^{\scriptscriptstyle{0}}=-iL(\hat\kk+\hat\tT),\qquad \Gamma_j^j=-i(C_j-H_j)\hat\kk-i(A_j-F_j)\hat\tT,\quad j=1\ldots n,
\]
where the hatted quantities denote  the non-metrically-dual coframe of $\{e_\ell\}$.
Citing, for example, Lemma 4.2 in \cite{dr-ma}, the Ricci form of $g$ is given by
\begin{align}
\rho&=i(d\Gamma_{\scriptscriptstyle{0}}^{\scriptscriptstyle{0}}+{\textstyle\sum_{j=1}^n} d\Gamma_j^j)=L(d\hat\kk+d\hat\tT)+\Big({\textstyle\sum_{j=1}^n}(C_j-H_j)\Big)\,d\hat\kk\nonumber\\
&+\Big({\textstyle\sum_{j=1}^n}(A_j-F_j)\Big)\,d\hat\tT
+dL\we(\hat\kk+\hat\tT)
+\Big({\textstyle\sum_{j=1}^n} d(C_j-H_j)\Big)\we\hat\kk\lb{ric}\\
&+\Big({\textstyle\sum_{j=1}^n} d(A_j-F_j)\Big)\we\hat\tT.\nonumber
\end{align}

We now wish to write the Ricci components in our frame.
Applying to our coframe the formula
$d\eta(a,b)=d_a(\eta(b))-d_b(\eta(a))-\eta([a,b])$,
valid for any smooth $1$-form $\eta$, we have
\be\lb{d-frame}
\begin{aligned}
&d\hat\kk(\kk,\tT)=-L=d\hat\tT(\kk,\tT),\\
&d\hat\kk(\xx_i,\yy_i)=-\hat\kk([\xx_i,\yy_i])=-\hat\kk (N_i(\kk+\tT))=-N_i=d\hat\tT(\xx_i,\yy_i),\\
&d\hat\kk(\kk,\xx_i)=d\hat\kk(\kk,\yy_i)=
d\hat\kk(\tT,\xx_i)=d\hat\kk(\tT,\yy_i)=0,\\
&d\hat\tT(\kk,\xx_i)=d\hat\tT(\kk,\yy_i)=d\hat\tT(\tT,\xx_i)=d\hat\tT(\tT,\yy_i)=0,
\end{aligned}
\end{equation}
whereas $d\hat\kk$, $d\hat\tT$ vanish on pairs taken from $\xx_i$, $\yy_i$, $\xx_j$, $\yy_j$
for $i\ne j$.

Using this in \Ref{ric}
%along with the expression for $\om$ in the coframe,
we have for $i=1,\ldots n$
%immediately see that the K\"ahler-Einstein equation is equivalent
%to the system
%This is equivalent
%to the following six equations, obtained by equating the Ricci form and
%$\lam$ times the Kahler form on the frame fields.
\be\lb{Ric-frame}
\begin{aligned}
&\rho(\xx_i,\yy_i)=-N_i(2L+{\textstyle\sum_{j=1}^n\nolimits}(C_j-H_j+A_j-F_j)),\\[2pt]
&\rho(\kk,\tT)=-L(2L+{\textstyle\sum_{j=1}^n\nolimits}(C_j-H_j+A_j-F_j))\\[1pt]
&\qquad\quad\ \, \ \, +d_{\kk-\tT}L-{\textstyle\sum_{j=1}^n\nolimits}(d_\tT(C_j-H_j)+d_\kk(A_j-F_j)),\\[2pt]
&\rho(\kk,\xx_i)=-d_{\xx_i}(L+{\textstyle\sum_{j=1}^n\nolimits}(C_j-H_j)),\\[2pt]
&\rho(\kk,\yy_i)=-d_{\yy_i}(L+{\textstyle\sum_{j=1}^n\nolimits}(C_j-H_j)),\\[2pt]
&\rho(\tT,\xx_i)=-d_{\xx_i}(L+{\textstyle\sum_{j=1}^n\nolimits}(A_j-F_j)),\\[2pt]
&\rho(\tT,\yy_i)=-d_{\yy_i}(L+{\textstyle\sum_{j=1}^n\nolimits}(A_j-F_j)),\\[2pt]
&\rho(\xx_i,\xx_j)=0,\qquad \rho(\xx_i,\yy_j)=0,\qquad \rho(\yy_i,\yy_j)=0\ \  \text{   for $i\ne j$},
\end{aligned}
\end{equation}
where $d_{e_\ell}$ denotes the directional derivative with respect to $e_\ell$.
%As equations like~\Ref{KE-eqns} involve directional derivatives rather than partial
%derivatives, they were called generalized PDEs in \cite{a-m2}, where a
%precise definition appears.

\subsection{The Ricci soliton equation and its two generalizations}\lb{skew-sol0}

We discuss here two soliton-type equations,
one of which will be examined later for $(M,g)$ as in Proposition~\ref{kah}.

Consider on a K\"ahler manifold an equation of the form
\be\lb{sol}
\rho+\tfrac 12\mathcal{L}_X\om=\lam\,\om
\end{equation}
where $\rho$ is the Ricci form, $\om$ the K\"ahler form, $\lam$ is a constant and
$\mathcal{L}_X$ is the Lie derivative with respect to a {\em smooth} vector field $X$.
This is the Chern-Ricci soliton equation \cite{la}, even on a K\"ahler manifold, as
$X$ is just smooth and possibly not holomorphic.
Computing this Lie derivative term on any K\"ahler manifold, we have,
\begin{multline}\lb{Chern}
\mathcal{L}_X\om(a,b)=(d\imath_X\om)(a,b)=(\n_a(\imath_X\om))(b)-(\n_b(\imath_X\om))(a)\\
=\om(\n_aX,b)-\om(\n_bX,a)=g(\n_a(JX),b)-g(\n_b(JX),a),
\end{multline}
as $\om$ and $J$ are parallel.
Note that this expression is generally different from the skew-symmetric expression
\be\lb{skew}
g(\n_{Ja}X,b)-g(\n_{Jb}X,a)
\end{equation}
However, the two expressions are equal if $X$ is holomorphic, since then
\begin{multline*}
0=(\mathcal{L}_XJ)(a)=\mathcal{L}_X(Ja)-J\mathcal{L}_Xa=[X,Ja]-J[X,a]\\
\n_X(Ja)-\n_{Ja}X-J\n_Xa+J\n_aX=J\n_aX-\n_{Ja}X,
\end{multline*}
and this observation also shows that \Ref{Chern} and \Ref{skew}
are indeed generally not equal if $X$ is smooth but not holomorphic, as $\mathcal{L}_XJ$
is skew-adjoint rather than self-adjoint for a hermitian metric.

The condition that $X$ is holomorphic and not just smooth turns \Ref{sol}
into the standard Ricci soliton equation. Now if $X=\n f$ is a gradient
of a smooth function $f$, then $X$ is holomorphic exactly when $JX$ is
a Killing field (cf. \cite[Lemma 5.2]{dr-ma}). In the following we will
use the latter criteria to examine gradient Ricci solitons. However, we will
also look at cases where $X=\n f$ is only smooth, and between the two
generalizations \Ref{Chern} and \Ref{skew} of the gradient Ricci soliton
condition, we will be examining the latter, i.e. the equation
\be\lb{skew-sol}
\rho(a,b)+\tfrac 12(\n df(Ja,b)-\n df(Jb,a))=\lam\,\om(a,b),
\end{equation}
which corresponds to \Ref{skew} since $\n df(Ja,b)=g(\n_{Ja}\n f,b)$. We will
call pairs $(g,f)$ satisfying \Ref{skew-sol} {\em gradient K\"ahler-Ricci skew-solitons}.

\section{The frame-dependent form of the soliton equation}\lb{gen-sol}

We now consider (M,g,J) as in Proposition~\ref{kah}.
Employing the formula
\[
\n df(e_i,e_j)=d_{e_i}d_{e_j}f-df(\n_{e_i}e_j)
\]
and noting that our K\"ahler form is just $\om=\hat\kk\we\hat\tT+\sum_{i=1}^n\hat\xx_i\we\hat\yy_i$,
one calculates, using also \Ref{Ric-frame} and the covariant derivative formulas stemming from
\Ref{conn}, that the skew-soliton equation \Ref{skew-sol} is equivalent to the following system of
frame-dependent PDEs, for each $i=1,\ldots n$
\be\lb{sol-eqns}
\begin{aligned}
%\rho(\xx_i,\yy_i)=
&-N_i(2L+{\textstyle\sum_{j=1}^n}(C_j-H_j+A_j-F_j))+{\textstyle\fr12} (d_{\xx_i}^2f+d_{\yy_i}^2f-N_i(d_\kk f-d_\tT f))=\lam,\\[2pt]
%\rho(\kk,\tT)=
&-L(2L+{\textstyle\sum_{j=1}^n}(C_j-H_j+A_j-F_j))\\[1pt]
&\ \ \ \ \ \ \ \ \ \ \ +d_{\kk-\tT}L
-{\textstyle\sum_{j=1}^n}(d_\tT(C_j-H_j)+d_\kk(A_j-F_j))\\[1pt]
&\ \ \ \ \ \ \ \ \ \ \ \ \ \ \ \ \ \ \ \ \ \ \ \ \ \ \ \ \ \ +{\textstyle\fr12}(d_\kk^2f+d_\tT^2f-L(d_\kk f-d_\tT f))=\lam,\\[2pt]
%\rho(\kk,\xx_i)=
&-d_{\xx_i}(L+{\textstyle\sum_{j=1}^n}(C_j-H_j))+{\textstyle\fr12}(d_{\xx_i} d_\tT f-d_\kk d_{\yy_i} f-B_id_{\xx_i} f+A_id_{\yy_i} f)=0,\\[2pt]
%\rho(\kk,\yy_i)
&-d{\textstyle_{\yy_i}(L+\sum_{j=1}^n}(C_j-H_j))+{\textstyle\fr12}(d_{\yy_i} d_\tT f+d_\kk d_{\xx_i} f-D_id_{\xx_i} f+C_id_{\yy_i} f)=0,\\[2pt]
%\rho(\tT,\xx_i)=
&-d_{\xx_i}(L+{\textstyle\sum_{j=1}^n}(A_j-F_j))+{\textstyle\fr12}(-d_{\xx_i} d_\tT f-d_\tT d_{\yy_i} f-F_id_{\xx_i} f+E_id_{\yy_i} f)=0,\\[2pt]
%\rho(\tT,\yy_i)=
&-d_{\yy_i}(L+{\textstyle\sum_{j=1}^n}(A_j-F_j))+{\textstyle\fr12}(-d_{\yy_i} d_\kk f+d_\tT d_{\xx_i} f-H_id_{\xx_i} f+G_id_{\yy_i} f)=0,\\[2pt]
&\ \ \ \, d_{\yy_i}d_{\xx_j}f-d_{\yy_j}d_{\xx_i}f=0,\qquad d_{\yy_i}d_{\yy_j}f+d_{\xx_j}d_{\xx_i}f=0
\text{ for $i\ne j$.}
\end{aligned}
\end{equation}

As for the additional conditions for this to be K\"ahler-Ricci soliton, we will compute them
later only for special choices of $f$.

\subsection{The $\ta$-dependent case}\lb{ta-dep}

We now make the assumption that the skew-soliton potential $f$ is a composition of a
function on the range of $\ta$, with $\ta$. By abuse of notation we write.
\[
f=f(\ta).
\]
Additionally, as mentioned earlier, we assume the functions $A_i,\ldots,H_i, N_i, L$ are also
such composites. We call this setting the {\em $\ta$-dependent case}.

Now $d_{\xx_i}\ta=g(\xx_i,\kk-\tT)=0$ and similarly
$d_{\yy_i}\ta=0$, $i=1\ldots n$, while $d_\kk\ta=1$ and $d_\tT\ta=-1$.
It follows that under these assumptions the equations represented in the last
five lines of \Ref{sol-eqns} are satisfied trivially,
while those in the first two lines become
\begin{align}
&-N_i(2L+{\textstyle\sum_{j=1}^n}(C_j-H_j+A_j-F_j))-N_if'=\lam,\lb{1}\\[2pt]
&-L(2L+{\textstyle\sum_{j=1}^n}(C_j-H_j+A_j-F_j))\nonumber\\[1pt]
&\ \ \ \ \ \ \ \ \ \ \ \ \ \ \ \ +2L'+{\textstyle\sum_{j=1}^n}(C_j'-H_j'+A_j'-F_j')+f''-Lf'=\lam,\lb{2}
\end{align}
where the prime denotes differentiation with respect to (a variable on the image of) $\ta$.

Consider the case $\lam\ne 0$. In that case, from \Ref{1},
$2L+\sum_{j=1}^n(C_j-H_j+A_j-F_j)-f'$ is nowhere zero, and isolating $N_i$ from that equation,
we see $N_i$, which is also nonzero, is independent of $i$. Hence in analyzing this
case we denote
\[
N=N_i,\qquad i=1\ldots,n.
\]
Writing \Ref{1} in the form
\be\lb{1'}
2L+{\textstyle\sum_{j=1}^n}(C_j-H_j+A_j-F_j)=-\fr{\lam}N-f'
\end{equation}
we see that \Ref{2} is just
\[
L\Big(\fr{\lam}N+f'\Big)-\Big(\fr{\lam}N+f'\Big)'+f''-Lf'=\lam
\]
which simplifies to
\be\lb{3}
\lam\Big(\fr LN+\fr {N'}{N^2}-1\Big)=f''-Lf'-f''+Lf'=0
\end{equation}
However, we will now see that this equation is in fact an identity, so that
we only need to consider \Ref{1'}.

From relations \Ref{d-frame}, we have, returning momentarily to the notation $N_i$,
\[
d\hat{\kk}=-{\textstyle\sum_{i=1}^n}N_i\hat\xx_i\we\hat\yy_i-L\hat\kk\we\hat\tT.\\
\]
Expanding $d^2\kk=0$ and substituting
\[
dN_i=d_\kk N_i\,\hat\kk+d_\tT N_i\,\hat\tT+{\textstyle\sum_{i=1}^n}(d_{\xx_i}\! N_i\,\hat\xx_i+d_{\yy_i}\! N_i\,\hat\yy_i),
\]
we get a complicated expression, where the coefficients of $\hat\kk\we\xx_i\we\yy_i$
and $\hat\tT\we\xx_i\we\yy_i$ are, respectively
\begin{align*}
-d_\kk N_i+N_iA_i+N_iD_i-LN_i=0,\\
-d_\tT N_i+N_iE_i+N_iH_i+LN_i=0,
\end{align*}
Subtracting the second of these from the first, then using $d_{\kk-\tT}=2\partial_\ta$
along with the relations \Ref{rels2} and dividing by $-2N_i^2$, gives \Ref{3}.

%But by (68) in \cite{mr}, the left hand side is always zero in the K\"ahler case,
%so this equation holds automatically, and so we only need to consider \Ref{1'}.

\vspace{.2in}
We now check the extra conditions needed for the skew-soliton to be a
K\"ahler-Ricci soliton, by examining when $X=J\nabla f$ is a Killing vector field.
As $f=f(\tau)$, we have $\nabla f=f'(\tau)(\kk-\tT)$ so $J\nabla f=f'(\tau)(\kk+\tT)$.
Computing
\[
(\mathcal{L}_Xg)(a,b)=g([a,X],b)+g([b,X],a)
\]
on our frame, we find
\begin{align*}
{\textstyle\frac{1}{2}}\mathcal{L}_{X}g&=(f''(\tau)+Lf'(\tau))(\kf^2-\tf^2)\\
&-f'(\tau)\Big({\textstyle\sum_{i=1}^n}
\big[(A_i+E_i)\xf_i^2\\
&+(B_i+C_i+F_i+G_i)\xf_i\odot\yf_i+(D_i+H_i)\yf_i^2\big]\Big).
\end{align*}
Therefore, at points where $f'(\ta)\ne 0$, $J\nabla f$ is Killing when
\begin{align}
\label{JdfK1}f''(\tau)+Lf'(\tau)&=0,\\
\label{JdfK2}A_i+E_i&=0,\\
\label{JdfK3}D_i+H_i&=0,\\
\label{JdfK4}B_i+C_i+F_i+G_i&=0,
\end{align}
for $i=1,\ldots,n$.

We summarize the results of this subsection.
\begin{prop}\lb{skew-sol-lam-ne0}
Let $(M,g,J)$ be a K\"ahler manifold as in Proposition~\ref{kah},
with $A_i,\ldots,H_i,N_i$, $i=1,\ldots,n$ and $L$ functions of $\ta$.
Then $g$ is a gradient Ricci skew-soliton with soliton potential $f=f(\ta)$
and nonzero soliton constant $\lam$ if and only if all functions $N_i$
are equal to a single function $N$ and equation~\Ref{1'} holds. Furthermore,
$(g,f)$ is a gradient Ricci soliton if and only if additionally
equations~\Ref{JdfK1}-\Ref{JdfK4} hold for all $i=1\ldots n$ away from
critical points of $f(\ta)$ and every such critical point is degenerate.
\end{prop}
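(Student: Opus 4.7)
The plan is to assemble the proposition from calculations that subsection~\ref{ta-dep} has essentially carried out, supplying the logical scaffolding. First I would note that the $\tau$-dependence assumption collapses five of the seven lines of the system \Ref{sol-eqns}: each occurrence of $d_{\xx_i}$ or $d_{\yy_i}$ there acts on a function of $\tau$ alone, and $d_{\xx_i}\tau = g(\xx_i,\kk-\tT) = 0$ (likewise for $\yy_i$). The surviving content is exactly the pair \Ref{1}, \Ref{2}.

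For the skew-soliton equivalence, I would exploit $\lam\ne 0$: equation \Ref{1} rewritten as $-N_i\bigl[2L + \sum_j(C_j - H_j + A_j - F_j) + f'\bigr] = \lam$ forces the bracket to be nowhere zero and $N_i$ to equal $-\lam$ divided by it. Thus all $N_i$ coincide with a single function $N$, and the rewritten form is \Ref{1'}. Substituting \Ref{1'} into \Ref{2} reduces it to the relation $\lam(L/N + N'/N^2 - 1) = 0$, which is \Ref{3}. To verify this is an identity in our framework, I would expand $d^2\hat\kk = 0$ using the expression for $d\hat\kk$ extracted from \Ref{d-frame}, isolate the coefficients of $\hat\kk\we\hat\xx_i\we\hat\yy_i$ and $\hat\tT\we\hat\xx_i\we\hat\yy_i$, subtract them, and apply the relations \Ref{rels2} together with $d_{\kk-\tT} = 2\partial_\tau$. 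This identity check is the step I expect to be the main obstacle, since it is precisely what causes \Ref{2} to be redundant once \Ref{1'} is in force.

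For the K\"ahler-Ricci soliton refinement, I would invoke the criterion recalled in subsection~\ref{skew-sol0}: on a K\"ahler manifold, $\nabla f$ is holomorphic iff $J\nabla f$ is Killing. Here $\nabla f = f'(\tau)(\kk - \tT)$ and $J\nabla f = f'(\tau)(\kk + \tT)$. Using $(\mathcal{L}_X g)(a,b) = g([a,X],b) + g([b,X],a)$ and the brackets \Ref{brack1}-\Ref{brack3}, I would compute $\tfrac12\mathcal{L}_{J\nabla f}g$ on all frame pairs, recovering the displayed expression for $\tfrac12\mathcal{L}_X g$ in subsection~\ref{ta-dep}. At points where $f'(\tau)\ne 0$, vanishing of the $\kf^2 - \tf^2$ coefficient is \Ref{JdfK1}, and vanishing of the $\xf_i^2$, $\xf_i\odot\yf_i$, $\yf_i^2$ coefficients yields \Ref{JdfK2}-\Ref{JdfK4} after dividing out $f'(\tau)$. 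At a critical point $\tau_0$ (where $f'(\tau_0)=0$, so the Hessian reduces to $\n df|_p = f''(\tau_0)\,d\tau\otimes d\tau|_p$), the $\xx_i,\yy_i$-block contributions vanish automatically, while the remaining $\kf^2-\tf^2$ coefficient reduces to $f''(\tau_0)$; its required vanishing is exactly the condition that the Hessian vanish at the critical point, which is the degeneracy condition in the statement. Combining these observations yields both claimed equivalences.
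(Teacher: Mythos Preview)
Your proposal is correct and follows essentially the same route as the paper: reduce \Ref{sol-eqns} to \Ref{1}--\Ref{2} via the $\tau$-dependence, use $\lam\ne 0$ in \Ref{1} to force $N_i\equiv N$ and obtain \Ref{1'}, show \Ref{2} then collapses to the identity \Ref{3} verified from $d^2\hat\kk=0$, and derive \Ref{JdfK1}--\Ref{JdfK4} from the Killing condition on $J\nabla f$. Your explicit treatment of the critical-point case---observing that at $\tau_0$ with $f'(\tau_0)=0$ the Lie-derivative expression reduces to $f''(\tau_0)(\kf^2-\tf^2)$, so vanishing forces $f''(\tau_0)=0$---is a step the paper leaves implicit in its summary, and it correctly accounts for the degeneracy clause in the statement.
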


\section{The cohomogeneity one subclass}\lb{sec:coho}
In this section we begin a study of a fairly explicit class of examples. We first discuss
cohomogeneity one metrics admitting a frame satisfying conditions \Ref{brack1}-\Ref{rels2}.
Our discussion follows \cite{mr} closely.

Let $(M,g)$ be a Riemannian manifold of dimension $2m=2n+2$
admitting a proper isometric action by a Lie group $\mathcal{G}$ with cohomogeneity one.
Then there is a subgroup $\mathcal{H}<\mathcal{G}$ so that $\mathcal{G}/\mathcal{H}$ is the $2n+1$-dimensional principal orbit type.
Let $p\in M$ be a point with isotropy group $\mathcal{K}$ satisfying $\mathcal{H}\le\mathcal{K}<\mathcal{G}$. Then the orbit $\mathcal{G}\cdot p$ through $p$ is isomorphic to $\mathcal{G}/\mathcal{K}$. Consider the following complementary facts. For the principal $\mathcal{K}$-bundle $\mathcal{G}\to\mathcal{G}/\mathcal{K}$,
one has an associated bundle $\mathcal{G}\times_{\mathcal{K}}\nu_p$, where $\nu_p$ is
the normal space to the orbit at $p$. The differential of the action mapping identifies this bundle
with the full normal bundle $\nu$ to the orbit. On the other hand, the normal exponential map $\exp_p^\perp$ at $p$ sends an $\varepsilon$-disk in $\nu_p$ to a
%An $\varepsilon$-neighborhood of $p$ in the directions normal to the orbit $\mathcal{G}\cdot p$ gives a
slice for the action of $\mathcal{G}$
\[ S' = \{\exp_p(rX)\ |\ 0\le r < \varepsilon, |X|=1, X\perp \mathcal{G}\cdot p \}. \]
and induces, by the tubular neighborhood theorem, a map from a neighborhood of the
zero section in $\nu$ to a neighborhood of the orbit. Putting these facts together
we obtain an equivariant diffeomorphism,
\[ \mathcal{G}\times_\mathcal{K} D^{\ell+1}\cong\mathcal{G}\cdot S', \]
where $\ell=\mathrm{dim}\,\mathcal{K}-\mathrm{dim}\,\mathcal{H}$ (cf. \cite[Section 5.6]{pp}).

The isotropy action of $\mathcal{K}$ in fact preserves length, so on $S'$, we see that the spheres
\[ S_r = \{\exp_p(rX)\ |\ |X|=1, X\perp \mathcal{G}\cdot p \} \]
are preserved by the induced action of $\mathcal{K}$.
Since points on one of these spheres have isotropy type $\mathcal{H}$,
we must have \[ \mathcal{K}/\mathcal{H} \cong \mathbb{S}^\ell. \]

Regarding the metric as residing on $\mathcal{G}\times_\mathcal{K} D^{\ell+1}$, it can be written
in the form
\begin{equation}\label{equivmet} dr^2+g_r.\end{equation}
%In the next subsection we will match this presentation of the metric
%with one given via an appropriate orthonormal coframe as in the previous sections. For now, we
%remark that the relation $\tau=\sqrt{2}r$ will hold.
We will now assume that $\mathcal{G}$ has dimension $2n+1$ with $\mathcal{H}$ a discrete
principal stabilizer. In the case of a unimodular group, we consider the special case of a
diagonal metric, in the form inspired by the four-dimensional case appearing,  for example, in \cite{d-s1}.
%When $\mathcal{G}$ is unimodular, $g$ has Bianchi type A, and can be written as
Namely, we write
\begin{equation}\label{bianchiAmet} g = \Big(\prod_{i=1}^{n}(a_ib_i)^2\Big)c^2\,dt^2+c^2\zeta^2
+\sum_{i=1}^{n}(a_i^2\sig_i^2+b_i^2\rho_i^2), \end{equation}
for functions $a_i$, $b_i$ and $c$ of $t$ and left invariant $1$-forms $\sig_i$, $\rho_i$ and $\zeta$.
%We will see this implies that $d\tau=\sqrt{2}abcdt$.
%This change of variables allows a more effective use of standard ODE theory, as
%will become evident in later sections.
%shown that it is justified in the relevant cases.

%The $\sigma_i$ satisfy, for some constants $p_i$
%\begin{align*}
%d\sigma_1 &= p_1\sigma_2\wedge\sigma_3, \\
%d\sigma_2 &= p_2\sigma_3\wedge\sigma_1, \\
%d\sigma_3 &= p_3\sigma_1\wedge\sigma_2.
%\end{align*}
In terms of the frame $\partial_t,Z,\{X_i\}_{i=1}^{n},\{Y_i\}_{i=1}^{n}$ non-metrically dual, respectively, to the coframe $dt,\zeta,\{\sig_i\}_{i=1}^{n},\{\rho_i\}_{i=1}^{n}$, we restrict the
possible groups $\mathcal{G}$ by requiring repeated Bianchi type A structure constants:
\begin{equation}\label{X123}
\begin{split}
[\partial_t,X_i]&=[\partial_t,Y_i]=[\partial_t,Z]=0,\quad i=1,\ldots n,\\[3pt]
[X_i,Y_i]&=-{\mathlarger\Gamma}_{ii}^{z} Z,\quad i=1,\ldots n,\\[3pt]
[Y_i,Z\,]&=-\mathlarger{\Gamma}_{iz}^{i} X_i,\quad i=1,\ldots n,\\[3pt]
[Z,X_i\,]&=-\mathlarger{\Gamma}_{zi}^{i} Y_i,\quad i=1,\ldots n,\\[3pt]
[X_i,X_j]&=0,\quad [X_i,Y_j]=0 \text{ for $i\ne j$,\quad  $i,j=1,\ldots n$},
\end{split}
\end{equation}
where $\Gamma_{ab}^c$ stand for constants and all other structure constants vanishing or
determined by the Lie algebra requirements. Note that the summation convention is {\em not} used here.
For $n>1$ these are very stringent conditions: the Jacobi identity for $X_i$, $Y_i$, $X_j$ and for
$X_i$, $Y_i$, $Y_j$ with $j\ne i$ implies that either ${\mathlarger\Gamma}_{ii}^{z}=0$ or both ${\mathlarger\Gamma}_{jz}^{z}$ and ${\mathlarger\Gamma}_{zj}^{z}$ vanish for all $j\ne i$.

%Finally,
%in practice we will mostly be concerned with the case where the structure constants do not
%depend on the lower index $i$.
%only on their upper index, and in that case use notation such as $\Gamma^{2n+1}$.

%For the functions $w_1=bc$, $w_2=ac$, and $w_3=ab$, define functions $\alpha$, $\beta$, and $\gamma$ so %that
%\begin{align}
%w_1'&=p_1w_2w_3+\alpha w_1, \\
%w_2'&=p_2w_1w_3+\beta w_2, \\
%w_3'&=p_3w_1w_2+\gamma w_3.
%\end{align}
%Then, following Dancer and Strachan \cite{d-s1}, we see that (modulo reordering the frame vectors) the %only K\"ahler structures $(M,g,J)$ with $g$ of the form \Ref{bianchiAmet} have complex structure %determined by
The almost complex structure will be determined in this frame by
\begin{equation}\label{complexJ}
J\partial_t = \Big(\prod_{i=1}^na_i b_i\Big)Z \quad\text{and}\quad JX_i=\frac{a_i}{b_i}Y_i,\quad i=1\ldots n.
\end{equation}
%and $\alpha$, $\beta$, and $\gamma$ satisfy
%\[ \alpha=\beta \quad \text{and}\quad \gamma=0.\]
%The K\"ahler form is then given by
%\begin{equation}\label{kahlerform} \omega = abc^2dt\wedge\sigma_3+ab\sigma_1\wedge\sigma_2 = %w_1w_2dt\wedge\sigma_3+w_3\sigma_1\wedge\sigma_2, \end{equation}
%and $w_1,w_2,w_3$ satisfy
%\begin{align}
%w_1'&=p_1w_2w_3+\alpha w_1,\nonumber \\
%w_2'&=p_2w_1w_3+\alpha w_2,\nonumber \\
%w_3'&=p_3w_1w_2.\lb{w3}
%\end{align}
%In terms of $a,b,c$ this implies
%\begin{align}
%2a'/a&=-p_1a^2+p_2b^2+p_3c^2,\lb{K1} \\
%2b'/b&=p_1a^2-p_2b^2+p_3c^2, \lb{K2}\\
%2c'/c&=p_1a^2+p_2b^2-p_3c^2+2\alpha.\lb{KKE}
%\end{align}

In the next subsection we %derive the Einstein condition, after
show how this model fits within the framework of Section~\ref{general}.

\subsection{A frame as in Section~\ref{construct}}\lb{bian}

In this subsection we show how the metric $g$ of the previous subsection
gives rise to data satisfying \Ref{brack1}-\Ref{rels2}, and write down
equations equivalent to \Ref{sol-eqns} for it.

Set $\al=(\prod_{i=1}^{n}(a_ib_i))c$, and consider the orthonormal frame and dual coframe
\begin{align*}
&\mathbf{k} = \left(Z/c+\partial_t/\al \right)/\sqrt{2}, && \hat{\mathbf{k}} = (c\,\zeta+\al\,dt)/\sqrt{2}, \\
&\mathbf{t} = \left(Z/c-\partial_t/\al \right)/\sqrt{2}, && \hat{\mathbf{t}} = (c\,\zeta-\al\,dt)/\sqrt{2}, \\
&\mathbf{x}_i = X_i/a_i, && \hat{\mathbf{x}}_i = a_i\sig_i,\quad i=1\ldots n\\
&\mathbf{y}_i = Y_i/b_i,     && \hat{\mathbf{y}}_i = b_i\rho_i,\quad i=1\ldots n.
\end{align*}

It can be checked that this frame satisfies \Ref{brack1}-\Ref{brack3}
for the functions
%Then the variables $A,B,C,D,E,F,G,H,L,N$ are
\be\lb{L-N}
\begin{aligned}
A_i&=-E_i=-\frac{a_i'}{\sqrt{2}\al a_i}=-\frac{1}{a_i}\frac{da_i}{d\tau}, & B_i &=F_i= -\frac1{\sqrt{2}}\mathlarger{\Gamma}_{zi}^i \fr{b_i}{ca_i}, \\
D_i&=-H_i=-\frac{b_i'}{\sqrt{2}\al b_i}=-\frac{1}{b_i}\frac{db_i}{d\tau},
& C_i&=G_i= \frac1{\sqrt{2}}\mathlarger{\Gamma}_{iz}^i \fr{a_i}{cb_i},\\
L&=-\frac{c'}{\sqrt{2}\al c}=-\frac{1}{c}\frac{dc}{d\tau},
& N_i &= -\frac1{\sqrt{2}}{\mathlarger\Gamma}_{ii}^{z}
\fr{c}{a_ib_i}.
\end{aligned}
\end{equation}

Here the prime denotes differentiation with respect to $t$, and
\[ \hat{\mathbf{k}}-\hat{\mathbf{t}}=d\tau=\sqrt{2}\al\,dt,\]
so that
\[ \frac{d}{d\tau}=\frac{1}{\sqrt{2}\al}\frac{d}{dt}. \]

\vspace{0.1in}
We note that for each $i=1\ldots n$, requiring the functions $A_i,\ldots,H_i,N_i$ and $L$, to fulfill
the four relations in \Ref{rels1}-\Ref{rels2} implies that in this model the K\"ahler condition
imposes only two additional relations here, say $A_i+D_i=N_i$ and $B_i+C_i=H_i-E_i$,
giving for each $i=1\ldots n$
\begin{align}
\fr{a_i'}{a_i}+\fr{b'_i}{b_i}&=\mathlarger{\Gamma}_{ii}^z\fr{\al c}{a_ib_i},\lb{K3}\\
\fr{b_i'}{b_i}-\fr{a_i'}{a_i}&=\Big(\mathlarger{\Gamma}_{iz}^i\fr{a_i}{b_i}
-\mathlarger{\Gamma}_{zi}^i\fr{b_i}{a_i}\Big)\fr{\al}c.\lb{another}
\end{align}

We now impose the skew-soliton and soliton conditions of Proposition~\ref{skew-sol-lam-ne0}.
We assume the soliton constant is nonzero and the soliton potential depends on $\ta$.
Recall this implies all functions $N_i$ equal the same function $N$, i.e.
\be\lb{uniform}
\text{$\mathlarger{\Gamma}_{ii}^z(a_ib_i)^{-1}$ is independent of $i$.}
\end{equation}
Then equation \Ref{1'} can be written with the help of \Ref{K3} in the form
\be\lb{1''}
\fr{c'}c=\al\Big[\sum_{j=1}^n\Big[\fr 1{2a_jb_j}\Big(\fr1{c}\big(\mathlarger{\Gamma}_{jz}^ja_j^2+\mathlarger{\Gamma}_{zj}^jb_j^2\big)
-\mathlarger{\Gamma}_{jj}^zc\Big)\Big]-\fr{\lam}{\mathlarger{\Gamma}_{ii}^z}\fr{a_ib_i}{c}\Big]+\fr12f'
\end{equation}
We now consider the additional conditions for a Ricci soliton, that must hold
away from critical points of $f(\ta)$. Two of them, \Ref{JdfK2} and \Ref{JdfK3}
always hold in our model. Condition \Ref{JdfK4} simplifies to
\be\lb{Gam-ab}
\mathlarger{\Gamma}_{iz}^ia_i^2-\mathlarger{\Gamma}_{zi}^ib_i^2=0,\quad i=1,\ldots n,
\end{equation}
so only one of the terms in this difference is retained in \Ref{1''},
but we also see from \Ref{another} that
\be\lb{a-eq-b}
\text{$b_i=\ell_i a_i$ for a constant $\ell_i>0$ satisfying
$\mathlarger{\Gamma}_{iz}^i-\ell_i^2\mathlarger{\Gamma}_{zi}^i=0$, $i=1,\ldots n$.}
\end{equation}
Finally, condition \Ref{JdfK1} simplifies to $f''-\fr{(\al c)'}{\al c}f'=0$,
which is equivalent to
\be\lb{f-pr} \text{$f'=k\al c$ for a constant $k$.}\end{equation}
Substituting the consequences of these two conditions in \Ref{1''}, we arrive at the following equation
for a gradient K\"ahler-Ricci soliton in this model:
\be\lb{1'''}
\fr{c'}c=\al\Big[\sum_{j=1}^n\Big[\fr 1{2\ell_j a_j^2}\Big(\fr2{c}\mathlarger{\Gamma}_{jz}^ja_j^2
-\mathlarger{\Gamma}_{jj}^zc\Big)\Big]-\fr{\lam}{\mathlarger{\Gamma}_{ii}^z}\fr{\ell_i a_i^2}{c}+\fr12kc\Big].
\end{equation}
To this we can add the modification of equation \Ref{K3}:
\be\lb{K3'}
\fr{a_i'}{a_i}=\fr{\mathlarger{\Gamma}_{ii}^z}2\fr{\al c}{\ell_ia_i^2},\quad i=1,\ldots n.
\end{equation}
We summarize
\begin{prop}\lb{coho-one-sols}
Let $(M^{2n+2},g)$ be a cohomogeneity one manifold under a proper action of
a Lie group $\mathcal{G}$ whose Lie algebra relations in a given frame are
as in the last four lines of \Ref{X123}, while $g$ has the
form~\Ref{bianchiAmet}. If $\ta=\ta(t)$ is a solution of
$\ta'(t)=\sqrt{2}\big(\prod_{i=1}^n a_ib_i\big)c$, then $(g,f(\ta))$ is a K\"ahler-Ricci
skew-soliton with nonzero soliton constant if \Ref{K3}-\Ref{1''}
hold. If additionally \Ref{Gam-ab} holds, and \Ref{f-pr} also holds for a constant $k$ and
$f=f(\ta(t))$, then \Ref{a-eq-b}, \Ref{1'''} and \Ref{K3'} also hold, and
if, furthermore, every critical point of $f(\ta)$ is degenerate, then
$g$ is a K\"ahler-Ricci soliton metric with soliton potential $f$.
\end{prop}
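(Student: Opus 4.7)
The plan is to identify the cohomogeneity one setup of subsection~\ref{bian} as a special instance of the $\ta$-dependent ansatz of subsection~\ref{ta-dep}, and then simply invoke Proposition~\ref{skew-sol-lam-ne0}. The bulk of the work is the dictionary translating between the two languages: verifying the Lie bracket relations \Ref{brack1}-\Ref{brack3} and the K\"ahler compatibility \Ref{rels1}-\Ref{rels2}, and rewriting the abstract equations \Ref{1'}, \Ref{JdfK1}-\Ref{JdfK4} in terms of the concrete data $a_i$, $b_i$, $c$ and the structure constants $\Gamma_{ab}^c$.

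First I would check orthonormality of $\{\kk,\tT,\xx_i,\yy_i\}$ and that \Ref{complexJ} supplies the framework's almost complex structure (namely $J\kk=\tT$ and $J\xx_i=\yy_i$); both follow directly from the definitions. Using $[\partial_t,X_i]=[\partial_t,Y_i]=[\partial_t,Z]=0$ from \Ref{X123} together with the fact that $a_i,b_i,c$ depend only on $t$ (hence are annihilated by the orbit-tangent fields $Z,X_j,Y_j$), the Lie brackets of the new frame reproduce \Ref{brack1}-\Ref{brack3} with coefficients exactly as in \Ref{L-N}. Built into \Ref{L-N} are the symmetries $E_i=-A_i$, $H_i=-D_i$, $F_i=B_i$, $G_i=C_i$, which render all but two of the compatibility relations \Ref{rels1}-\Ref{rels2} automatic; the remaining independent conditions are precisely \Ref{K3} and \Ref{another}, which are assumed. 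Proposition~\ref{kah} then provides the K\"ahler structure. Since $\hat\kk-\hat\tT=\sqrt{2}\al\,dt$, the choice $\ta'(t)=\sqrt{2}\al$ yields $d\ta=\hat\kk-\hat\tT$, so $\n\ta=\kk-\tT$ as required by \Ref{grad}; treating $a_i,b_i,c$ (and consequently all of $L,A_i,\ldots,H_i,N_i$) as functions of $\ta$ through $t(\ta)$ places us squarely in the $\ta$-dependent setting.

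Proposition~\ref{skew-sol-lam-ne0} now applies. The hypothesis that all $N_i$ coincide as functions of $\ta$ becomes \Ref{uniform}, and the skew-soliton equation \Ref{1'} unfolds into \Ref{1''} by plugging \Ref{L-N} into the sum $2L+\sum_j(C_j-H_j+A_j-F_j)$, eliminating the combination $(a_j'/a_j+b_j'/b_j)/\al$ via \Ref{K3}, and converting the $\ta$-derivative $df/d\ta$ to the $t$-derivative $f'=df/dt$ using $d\ta/dt=\sqrt{2}\al$. This settles the skew-soliton half of the proposition.

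For the soliton refinement I would verify \Ref{JdfK1}-\Ref{JdfK4} one by one. Conditions \Ref{JdfK2} and \Ref{JdfK3} are automatic from the signs in \Ref{L-N}; \Ref{JdfK4}, combined with $B_i=F_i$ and $C_i=G_i$, collapses to $\Gamma_{iz}^ia_i^2=\Gamma_{zi}^ib_i^2$, which is \Ref{Gam-ab}; feeding \Ref{Gam-ab} into \Ref{another} forces $b_i'/b_i=a_i'/a_i$, so $b_i=\ell_i a_i$ for constants $\ell_i$ satisfying the relation in \Ref{a-eq-b}; and \Ref{JdfK1}, after recognizing $L$ as (a scalar multiple of) a logarithmic derivative of $\al c$ in the $t$-variable, integrates to $f'=k\al c$, i.e., \Ref{f-pr}. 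Substituting \Ref{a-eq-b} and \Ref{f-pr} back into \Ref{1''} produces \Ref{1'''}, and the same substitution in \Ref{K3} yields \Ref{K3'}. The clause requiring every critical point of $f(\ta)$ to be degenerate is inherited verbatim from Proposition~\ref{skew-sol-lam-ne0}: it ensures that the Killing vector field $J\n f$ extends smoothly across zeros of $f'$, upgrading the skew-soliton to a genuine K\"ahler-Ricci soliton. The principal book-keeping difficulty throughout is the substitution from \Ref{1'} to \Ref{1''}, where the $\sqrt{2}$ and $\al$ factors interact with \Ref{K3} and with the conversion between $\ta$- and $t$-derivatives; everything else is essentially reorganization of terms.
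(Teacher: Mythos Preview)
Your proposal is correct and follows essentially the same approach as the paper: the proposition is stated there as a summary of the preceding discussion, which proceeds exactly as you outline---set up the orthonormal frame, read off the coefficients \Ref{L-N}, observe that \Ref{rels1}-\Ref{rels2} reduce to \Ref{K3}-\Ref{another}, then invoke Proposition~\ref{skew-sol-lam-ne0} and translate each of \Ref{1'} and \Ref{JdfK1}-\Ref{JdfK4} into the cohomogeneity-one data. One small wording correction: in the step from \Ref{JdfK1} to \Ref{f-pr}, it is not that $L$ itself is a logarithmic derivative of $\al c$; rather, the change of variable from $\ta$- to $t$-derivatives contributes an extra $\al'/\al$ term, so that $f''(\ta)+Lf'(\ta)=0$ becomes $f''-\tfrac{(\al c)'}{\al c}f'=0$ in the $t$-variable, which then integrates to \Ref{f-pr}.
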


%Finally,

%\begin{align}
%2a'/a&=-p_1a^2+p_2b^2+p_3c^2,\lb{K1} \\
%2b'/b&=p_1a^2-p_2b^2+p_3c^2, \lb{K2}\\
%2c'/c&=p_1a^2+p_2b^2-p_3c^2+2\alpha.\lb{KKE}
%\end{align}

%The conditions \Ref{JdfK2} and \Ref{JdfK3} are satisfied automatically, while \Ref{JdfK4} requires that %$a/b$ is constant and \Ref{JdfK1} requires that $\frac{1}{c}\frac{df}{d\tau}$ is constant.

\section{Cohomogeneity one Ricci solitons under the Heisenberg group}\lb{Heisen-sols}

\subsection{The equations}
We now consider the special case of gradient K\"ahler-Ricci solitons for the action of the Heisenberg group. Note that in this case it will turn out that the $m-1$ non-trivial shear equations are satisfied
by a zero shear operator. And indeed we will be able to solve for the metric explicitly, in accordance
with the state of affairs described in the introduction.

For the Heisenberg group, one can take
\[
\mathlarger{\Gamma}_{iz}^i=\mathlarger{\Gamma}_{zi}^i=0,\qquad \mathlarger{\Gamma}_{ii}^z=1,\quad i=1,\ldots n,
\]
so that \Ref{Gam-ab} holds automatically.
The constants $\ell_i$ of \Ref{a-eq-b} can now be chosen freely, and we take them all to equal $1$. Also, from \Ref{uniform} we see that
for all $i=1,\ldots n$, $a_i^2=a_{\1}^2$. Thus $\al=a_{\1}^{2n}c$ and equations \Ref{1'''}-\Ref{K3'}
take the form
\begin{align}
2\fr{a_{\1}'}{a_{\1}}&=a_{\1}^{2n-2} c^2,\lb{a1}\\%\quad i=1,\ldots n,\lb{a1}\\
2\fr{\,c'}{\textstyle c}&=-na_{\1}^{2n-2}c^2-2\lam a_{\1}^{2n+2}+ka_{\1}^{2n}c^2.\lb{c}
\end{align}
The metric has the form
\[
g=a_{\1}^2\sum_{i=1}^n(\sig_i^2+\rho_i^2)+c^2\zeta^2+a_{\1}^{4n}c^2\,dt^2.
\]
%On the Heisenberg group, with $p_1=0$, $p_2=0$ and $p_3=1$, for
%an expanding soliton with $\lam=-1$,
%equations \Ref{K1}-\Ref{KKE} take the form
%\begin{align}
%2\frac{a'}a&=c^2,\lb{K1H}\\
%2\frac{b'}b&=c^2.\nonumber\\
%2\frac{c'}c&=-c^2+2(ab)^2+f'.\lb{KKEH}
%\end{align}
%Since $(a/b)'=0$ is a first integral, $b$ is a constant multiple of $a$, so that potential
%metrics are biaxial. From now on we assume this constant is equal to $1$. Additionally,
As in \cite{a-m2}, we now make the change of variables $a_{\1}^2\,dt=dq$. Then,
setting $\phi(q):=a_{\1}^2$, we see from \Ref{a1} that, with the prime denoting from now on
the derivative with respect to $q$,
\[
\phi'(q)=2a_{\1}\fr {da_{\1}}{dq}=2a_{\1}\fr {da_{\1}}{dt}\fr{dt}{dq}=2a_{\1}\fr {da_{\1}}{dt}\fr{1}{a_{\1}^2}=a_{\1}^{2n-2}c^2,
\]
and hence
\[ \phi'(q)\,dq^2=a_{\1}^{2n+2}c^2\,dt^2. \]
It follows that the metric takes the form
\be\lb{g-phi}
g=\sum_{i=1}^n\phi(q)(\sig_i^2+\rho_i^2)+\fr{\phi'(q)}{\phi(q)^{n-1}}\zeta^2+\phi(q)^{n-1}\phi'(q)\,dq^2.
%g=\phi(q)(\sig_1^2+\sig_2^2)+\phi'(q)(\sig_3^2+dq^2)
\end{equation}
with K\"ahler form
\[
\om=d(\phi(q)\zeta).
%\om=d(\phi(q)\sig_3).
\]
Note that equation~\Ref{a1} becomes an identity with these choices, while equation \Ref{c} yields
\begin{align*}
\Big(\fr{\phi'(q)}{\phi(q)^{n-1}}\Big)'\Big/\Big(\fr{\phi'(q)}{\phi(q)^{n-1}}\Big)&
=2c\fr{dc}{dt}\fr{dt}{dq}\fr 1{c^2}=\left(-na_{\1}^{2n-2}c^2-2\lam a_{\1}^{2n+2}+ka_{\1}^{2n}c^2\right)\fr1{a_{\1}^2}\\
%\left(-c^2+2a^4+\fr{df}{dt}\right)\fr 1{a^2}=
&=-n\fr{\phi'(q)}{\phi(q)}-2\lam\phi(q)^n+k\phi'(q),
\end{align*}
or equivalently, %with primes still denoting derivatives with respect to $q$
\be\lb{sol-Heis}
\fr{(\phi^2)''}{(\phi^2)'}=-2\lam\phi^n+k\phi'.
\end{equation}
Also, from \Ref{f-pr}
\be\lb{f-phi}
\fr{df}{dt}=f'\fr{dq}{dt}=f'\phi=k\phi\phi'.
\end{equation}
%We now examine the conditions for $J\n f$ to be Killing. First, conditions~\Ref{JdfK2}-\Ref{JdfK3}
%hold for any cohomogeneity one Bianchi type A  metric, as can be seen from the expression in
%section~\ref{bian}. For the Heisenberg Lie algebra, condition~\Ref{JdfK4} also holds as $p_1=p_2=0$.
%Condition~\Ref{JdfK1} implies, as $c$ cannot be zero that $\fr{df}{d\ta}=kc$ for a constant $k$.
%Thus
%\[
%f'\fr 1{\sqrt{2}c}=f'\fr{dq}{dt}\fr{dt}{d\ta}=\fr{df}{d\ta}=kc,
%\]
%so that $f'=\sqrt{2}k\phi'$ and \Ref{sol-Heis} becomes
%\be\lb{sol-Heis1}
%\fr{(\phi^2)''}{(\phi^2)'}=2\phi+\sqrt{2}k\phi'.
%\end{equation}

In summary
\begin{prop}
Let $(M^{2n+2},g)$ be a cohomogeneity one manifold under a proper action of
the Heisenberg group $\mathcal{H}_{2n+1}$ with $g$ of the
form \Ref{g-phi}. If $\phi$ satisfies \Ref{sol-Heis} then $g$ is a gradient
K\"ahler-Ricci soliton metric with soliton potential $f$ given as an affine function
of $\phi$, so long as $f$ is either constant, or $\phi$, when considered as
a function of $\ta$ such that $q'(\ta)=(2\phi(q)^{n-1}\phi'(q))^{-1/2}$,
has only degenerate critical points.
\end{prop}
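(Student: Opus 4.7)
The plan is to derive the claim directly from Proposition~\ref{coho-one-sols}, specializing the structure constants to the Heisenberg algebra and making the two changes of variable already developed in this section.

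First I would observe that the Heisenberg choice $\mathlarger{\Gamma}_{iz}^i=\mathlarger{\Gamma}_{zi}^i=0$, $\mathlarger{\Gamma}_{ii}^z=1$ makes \Ref{Gam-ab} hold automatically and leaves the constants $\ell_i$ of \Ref{a-eq-b} free, so one may choose $\ell_i=1$; then \Ref{uniform} collapses to $a_i^2$ being independent of $i$, giving $\alpha=a_1^{2n}c$. Under the substitution $dq=a_1^2\,dt$ with $\phi(q)=a_1^2$, a direct chain-rule computation (already performed before \Ref{sol-Heis}) gives $\phi'(q)=a_1^{2n-2}c^2$, so that \Ref{K3'} becomes a tautology and \Ref{1'''} rewrites as \Ref{sol-Heis}. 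Substituting into \Ref{bianchiAmet} produces the metric form \Ref{g-phi}. By the skew-soliton half of Proposition~\ref{coho-one-sols}, any $\phi$ satisfying \Ref{sol-Heis} then determines a gradient K\"ahler-Ricci skew-soliton.

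To promote this to a genuine soliton, I would integrate \Ref{f-phi}: combining $df/dt = k\phi\phi'(q)$ with $d\phi/dt = \phi\phi'(q)$ gives $df/d\phi = k$, hence $f = k\phi + c_0$, which is the asserted affine expression. Computing $dq/d\tau$ from $d\tau=\sqrt{2}\alpha\,dt$ and $dq=\phi\,dt$ yields $dq/d\tau = 1/(\sqrt{2}a_1^{2n-2}c) = (2\phi^{n-1}\phi')^{-1/2}$, matching the formula in the statement and identifying $\phi$ intrinsically as a function of $\tau$.

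It remains to address the critical-point hypothesis of Proposition~\ref{coho-one-sols}, which requires every critical point of $f(\tau)$ to be degenerate. If $k=0$, the potential is constant and the hypothesis is vacuous. If $k\ne 0$, then $f(\tau)=k\phi(\tau)+c_0$, so the critical points of $f(\tau)$ coincide with those of $\phi(\tau)$ and degeneracy transfers in both directions. The principal obstacle, beyond bookkeeping across the three parameters $t$, $q$, $\tau$, is confirming that the soliton potential $f$ extends smoothly through the zeros of $\phi'$ where the $q$-to-$\tau$ substitution is singular; this is precisely what the degeneracy hypothesis on $\phi(\tau)$ is designed to provide.
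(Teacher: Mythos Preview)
Your proposal is correct and follows essentially the same route as the paper: the proposition is stated there as a summary of the preceding derivation, with the only explicit proof remark being ``The last clause simply involves computing the change of variable from $\tau$ to $q$,'' which is exactly your computation of $dq/d\tau$. Your write-up is more detailed but traces the identical logical path through Proposition~\ref{coho-one-sols}, the specialization of the structure constants, the substitution $dq=a_1^2\,dt$, $\phi=a_1^2$, and the integration of \Ref{f-phi} to obtain $f=k\phi+c_0$.
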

The last clause simply involves computing the change of variable from $\ta$ to $q$.

\subsection{A completeness theorem}\lb{comp}
Our goal is to find complete expanding gradient K\"ahler-Ricci soliton metrics
in the case where $\mathcal{G}=\mathcal{H}_{2n+1}$ is the $2n+1$-dimensional
Heisenberg group.
%in order to avail ourselves of the methods of \cite{vz},
%we consider a cohomogeneity one action under the quotient
%$\mathcal{G}=\mathcal{H}_{2n+1}/\tilde{\mathbb{Z}}$ of the $2n+1$-dimensional
%Heisenberg group
%by the infinite cyclic group lying in its center, and given by
%\[ \tilde{\mathbb{Z}}:= \left\{\begin{bmatrix} 1 & \underline{0} & k\\ 0 & \mathrm{Id}_n & %\underline{0}\\ 0 & 0 & 1  \end{bmatrix} |\ k\in\mathbb{Z} \right\}.\]
%%The value of the real number $r$ will be determined later.
%$\mathcal{G}$ has center $K$ isomorphic to $SO(2)$, whose transitive action on the unit sphere $S^{2n-1}$ extends to a linear action on $V\!\smallsetminus\!\{0\}:=\mathbb{R}^{2n}\!\smallsetminus\!\{0\}$.
% We consider the homogeneous vector bundle $M=\mathcal{G}\times_K
%(V\!\smallsetminus\!\{0\})$ (in which points of the product
%are identified according to $(g,v)\sim(gk^{-1},kv)$ for $k\in K$).
%$\mathcal{G}$ acts on $M$ by left multiplication on the first factor,
%while $K$ acts by sending $(g,p)\to (gk^{-1}, kp)$.
%The action of $\mathcal{G}$ has trivial isotropy at points of its orbits, which are all regular.
%, but isotropy $K$ at a
%point of the singular orbit $\mathcal{G}/K\approx \mathbb{R}^2$.
In this simple case of the construction given in the beginning of Section~\ref{sec:coho},
the cohomogeneity one manifold $M$ will only have regular orbits, and at any point $p$ the isotropy is
trivial, giving $M=\mathcal{H}_{2n+1}\times I$ for some open interval $I$, with the action given
by left multiplication on the first factor.

To set the stage, consider the metric
$g$ given by \Ref{g-phi} for $\phi$ satisfying~\Ref{sol-Heis}
with $\lam=-1$ and $k=-1$, i.e.
\[
\fr{(\phi^2)''}{(\phi^2)'}=2\phi^n-\phi'.
\]
This has a first integral
\be\lb{phi-1ord1}
\phi'=2(-1)^{n+1}(n+1)!\fr1{\phi}\Big[{\textstyle\sum_{k=0}^{n+1}\fr{(-1)^k}{k!}}\phi^k-e^{-\phi}\Big]
=:F_n(\phi),
\end{equation}
where the coefficient of the exponential is a particular
choice of an integration constant. The right hand side of this equation will be denoted $F_n(\phi)$.
Now, positive definiteness of $g$ holds if both
$\phi>0$ and $\phi'>0$. But $H_n(\phi)=\sum_{k=0}^{n+1}\fr{(-1)^k}{k!}\phi^k-e^{-\phi}$
is zero at $\phi=0$, and has a positive/negative derivative for $\phi>0$ if $n$ is odd/even,
as can be shown by induction starting at $n=-1$. Thus $\phi'>0$ if $\phi>0$ and therefore
$g$ is positive definite whenever $\phi>0$.

Fix $\phi_0>0$. Since $F_n(\phi)=2(-1)^{n+1}(n+1)!H_n(\phi)/\phi$ is positive and continuous on $(0,\infty)$, there exists a unique solution $\phi(q)$ to \Ref{phi-1ord1} with initial condition $\phi(q_0)=\phi_0\in(0,\infty)$ defined on the interval
\be\lb{domain0}
(q_a,q_b)=\left(\int_{\phi_0}^{0} 1/F_n(\phi)\,d\phi,\int_{\phi_0}^\infty 1/F_n(\phi)\,d\phi\right).
\end{equation}
Note that on $(q_a,q_b)$, both $\phi$ and $\phi'$ are positive, so the metric
$g$ is well-defined for $q$ in this interval. Note also that employing
L'H{\^o}pital's rule to $F_n(\phi)$ shows that $\phi'(q)$, like $\phi(q)$, approaches
zero as $q\to q_a$. Additionally, we have the following elementary lemma, whose statement and
method of proof will be alluded to a number of times later on.
\begin{lemma}\lb{taylor}
$\phi'(q)$, like $\phi(q)$, is increasing in $(q_a,q_b)$.
\end{lemma}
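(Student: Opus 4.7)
The plan is to show $\phi''>0$ on $(q_a,q_b)$, which, together with the already-established positivity of $\phi$ and $\phi'$ there, gives monotonicity of $\phi'$. First, I would rewrite the underlying second-order equation \Ref{sol-Heis} (specialized to $\lam=k=-1$, so $(\phi^2)''/(\phi^2)'=2\phi^n-\phi'$) directly in terms of $\phi''$. Expanding $(\phi^2)'=2\phi\phi'$ and $(\phi^2)''=2(\phi')^2+2\phi\phi''$ and simplifying yields
\[
\phi''=\phi'\Big[2\phi^n-\phi'\,\fr{\phi+1}{\phi}\Big].
\]
Since $\phi,\phi'>0$ on $(q_a,q_b)$, the claim reduces to the strict inequality $\phi'<2\phi^{n+1}/(\phi+1)$.

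The key structural observation is that the polynomial-minus-exponential combination defining $F_n$ in \Ref{phi-1ord1} is, up to sign, the Taylor remainder of $e^{-\phi}$ at the origin of order $n+1$. Its integral form gives
\[
H_n(\phi)=\fr{(-1)^{n+1}}{(n+1)!}\int_0^\phi(\phi-t)^{n+1}e^{-t}\,dt,
\]
so that substituting into \Ref{phi-1ord1} and then letting $t=\phi s$ collapses $\phi'=F_n(\phi)$ into the manifestly positive representation
\[
\phi'=\fr{2}{\phi}\int_0^\phi(\phi-t)^{n+1}e^{-t}\,dt=2\phi^{n+1}\int_0^1(1-s)^{n+1}e^{-\phi s}\,ds.
\]

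To conclude, I would invoke the elementary convexity inequality $1-s<e^{-s}$ for $s>0$ (the tangent to $e^{-s}$ at the origin lies below the curve), which yields $(1-s)^{n+1}<e^{-(n+1)s}$ on $(0,1]$, and then estimate
\[
\phi'<2\phi^{n+1}\int_0^1 e^{-(n+1+\phi)s}\,ds<\fr{2\phi^{n+1}}{n+1+\phi}\le\fr{2\phi^{n+1}}{\phi+1},
\]
the last step using $n\ge 1$. This delivers $\phi''>0$ as required. The only non-routine obstacle is spotting that $F_n$, presented in \Ref{phi-1ord1} as a polynomial plus an exponential, is secretly a clean Kummer-type integral; once the Taylor remainder identity is in hand, the convexity of $e^{-s}$ combined with $n\ge 1$ closes the argument.
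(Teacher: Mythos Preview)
Your argument is correct and takes a genuinely different route from the paper's. The paper proves $F_n'(\phi)>0$ via the recursion $F_n'(\phi)=(n+1)F_{n-1}(\phi)-F_n(\phi)/\phi$, which it unpacks into a Taylor-polynomial inequality of the form $P_{n+1}(\phi)+\phi P_n(\phi)<(\phi+1)e^{-\phi}$ (for $n$ odd, with a parallel case for $n$ even), and then verifies this by comparing $(n+1)$st derivatives and integrating back down. You instead recognize $F_n$ as an integral Taylor remainder, obtaining the closed form $\phi'=2\phi^{n+1}\int_0^1(1-s)^{n+1}e^{-\phi s}\,ds$, and conclude via the single convexity estimate $1-s<e^{-s}$. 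Your approach is cleaner: it avoids the parity split, the recursion, and the repeated-integration argument, and it yields along the way the slightly sharper bound $\phi'<2\phi^{n+1}/(n+1+\phi)$. The paper's approach, by contrast, establishes the recursive identity \Ref{F-der}, which it reuses as a template for several later estimates (e.g.\ monotonicity of $F_n(\phi)/\phi^{n-1}$ and negativity of frame sectional curvatures), so its method is tailored to be recycled.
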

\begin{proof}
We consider the easily verifiable formula
\be\lb{F-der}
F_n'(\phi)=(n+1)F_{n-1}(\phi)-F_n(\phi)/\phi
\end{equation}
(where the prime is the $\phi$-derivative). As $\phi''(q)=F_n'(\phi(q))F_n(\phi(q))$,
to prove the latter claim it is enough to show $F_n'(\phi)>0$ for $\phi>0$. Unpacking this using
\Ref{F-der} and the definition of $F_n$ reveals that in case $n$ is odd, it suffices to show that
\be\lb{tay-pol}
P_{n+1}(\phi)+\phi P_n(\phi)<(\phi+1)e^{-\phi},
\end{equation}
where $P_n$ is the $n$-th Taylor polynomial
of $e^{-\phi}$. But the left hand side is just the $(n+1)$-th Taylor polynomial of the right hand
side.
%, and the Taylor series of the right hand side is alternating.
The two sides have $n+1$ equal derivatives at $\phi=0$, whereas for $\phi>0$ we have
\begin{align*} [P_{n+1}(\phi)+\phi P_n(\phi)]^{(n+1)}&=(-1)^{n+1}+(-1)^{n}(n+1)=-n\\
&<(-1)^{n+1}(\phi-n)e^{-\phi}=[(\phi+1)e^{-\phi}]^{(n+1)}.
\end{align*}
Working backwards inductively, each $k$-th derivative, $k=n,n-1,\ldots,1,0$ of the left hand side
of \Ref{tay-pol} is smaller than the $k$-th derivative of the right hand side for $\phi>0$. Thus
\Ref{tay-pol} holds as required.
%Now it is easy to show
%by a convexity that $P_2(\phi)+\phi P_1(\phi)=1-\pi^2/2<(\phi+1)e^{-\phi}$. For fixed $\phi$ and %small enough $n$, one can show that this lower bound is a lower bound for $P_{n+1}(\phi)+\phi %P_n(\phi)$. Still fixing $\phi$, as soon as $n$ is (odd and) large enough so that this does not %hold, the added terms of from the Taylor series have absolute values decreasing to zero.
%As they come in pairs these even Taylor polynomials are smaller than the series sum, as required.
The proof for $n$ even is similar.
\end{proof}

In the proof of the next theorem we will make use of the fact that, as $\phi$ is monotone
on $(q_a,q_b)$, its inverse $q(\phi)$ is well defined on $(0,\infty)$.
\begin{thm}\lb{sol-Heis2}
Let $\phi(q)$ be a solution to \Ref{phi-1ord1}
with initial condition $\phi(q_0)=\phi_0$ with $\phi_0>0$,
defined on the interval \Ref{domain0}.
Then there exists a complete expanding gradient K\"ahler-Ricci soliton $(g,f)$
on $M=\mathcal{H}_{2n+1}\times (q_a,q_b)$,
%of cohomogeneity one under the Heisenberg group $\mathcal{H}_{2n+1}$,
such that $g$ is given by \Ref{g-phi}, while up to an additive constant,
$f=-\phi$.
%\begin{align*}
%g&=\phi(\sig_1^2+\sig_2^2)+\phi'(\sig_3^2+dq^2)\\
%f'&=-\phi'. \\
%\end{align*}
\end{thm}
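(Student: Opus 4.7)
The plan is to verify (i) that $g$ defines an expanding gradient K\"ahler-Ricci soliton via the proposition preceding the theorem, and then (ii) to establish completeness, which is the main technical step.

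For (i), I would first differentiate \Ref{phi-1ord1} and use the identity \Ref{F-der} to check that $\phi$ satisfies the second-order equation \Ref{sol-Heis} with $\lam=k=-1$; this is essentially the first-integral claim recorded in the text preceding the theorem. The preceding proposition then applies and yields a gradient K\"ahler-Ricci soliton with potential $f$ an affine function of $\phi$, the specific form $f=-\phi+\mathrm{const}$ coming from \Ref{f-phi} with $k=-1$. The non-degeneracy hypothesis on critical points of $f$ viewed as a function of $\tau$ is vacuous: $\phi'(q)>0$ throughout $(q_a,q_b)$ and $\tau$ depends monotonically on $q$, so $\phi$---and hence $f$---has no critical points in $\tau$ at all. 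Since $\lam=-1<0$, the soliton is expanding.

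For (ii), the Heisenberg group acts by left-translation as isometries on each slice $\mathcal{H}_{2n+1}\times\{q\}$, and the induced slice metric is left-invariant, hence complete. Passing to the arc-length parameter $ds=\sqrt{\phi(q)^{n-1}\phi'(q)}\,dq$ in the transverse direction, the metric takes the form $h_s+ds^2$ with each $h_s$ a complete left-invariant metric on $\mathcal{H}_{2n+1}$; a standard warped-product-type argument---a Cauchy sequence either stays in a compact $q$-interval, where metric equivalence to a single slice plus completeness of that slice yields convergence, or has its $q$-coordinate escaping to an endpoint---reduces completeness of $(M,g)$ to the divergence of
\[
\int\sqrt{\phi(q)^{n-1}\phi'(q)}\,dq
\]
at both endpoints of $(q_a,q_b)$. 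Substituting $u=\phi(q)$ with $du=F_n(\phi)\,dq$, this becomes the divergence of $\int\sqrt{u^{n-1}/F_n(u)}\,du$ at $u=0$ and at $u=\infty$.

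The main obstacle is then a short asymptotic analysis of $F_n$. Near $u=0$, the polynomial $\sum_{k=0}^{n+1}(-1)^k u^k/k!$ appearing in $F_n$ is exactly the $(n+1)$-th Taylor polynomial of $e^{-u}$, so $H_n(u)\sim \tfrac{(-1)^{n+1}}{(n+2)!}\,u^{n+2}$, giving $F_n(u)\sim \tfrac{2}{n+2}\,u^{n+1}$ and an integrand behaving as $C/u$, which diverges logarithmically. Near $u=\infty$, the exponential term is negligible and the leading polynomial term dominates, yielding $F_n(u)\sim 2u^n$ so that the integrand decays as $1/\sqrt{2u}$ and the integral diverges as $\sqrt{u}$. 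Hence both endpoints are at infinite distance, and $(M,g)$ is complete.
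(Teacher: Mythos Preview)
Your proposal is correct, and the asymptotic analysis of $F_n$ matches what is needed (the paper phrases it as a Taylor remainder bound giving $\sqrt{\phi^{n-1}/F_n(\phi)}\ge C/\phi$, which is coarser than your exact asymptotics but equivalent for divergence).  The verification in part (i), including the observation that $f(\tau)$ has no critical points, coincides with the paper's treatment.

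The completeness argument, however, follows a genuinely different route.  The paper fixes explicit global coordinates $x_i,y_i,z,q$ on $\mathcal{H}_{2n+1}\times I$ and shows, via Cauchy--Schwarz against the orthonormal frame, that each coordinate stays bounded along any finite-length curve.  The ``trivial'' coordinates $x_i,y_i$ (whose coframe one-forms $\sigma_i,\rho_i$ are exact) are handled directly, while the center coordinate $z$ requires a separate two-step estimate because $\zeta=\sum_i x_i\,dy_i-dz$ is not closed; the paper even remarks afterward that this last step could have been bypassed by passing to a circle quotient and lifting.  Your approach sidesteps all of the coordinate bookkeeping by using that each hypersurface $q=\mathrm{const}$ carries a complete left-invariant metric and then trapping a finite-length curve in a slab where $g$ is uniformly comparable to a fixed product metric.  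This is more conceptual and transports readily to other cohomogeneity-one settings; on the other hand the paper's explicit coordinate bounds are reused downstream, e.g.\ for the distance estimate~\Ref{dist}.

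One small caution: your phrase ``metric equivalence to a single slice plus completeness of that slice yields convergence'' needs the observation that a short connecting curve (not merely the Cauchy sequence itself) is confined to a slightly larger slab, since only then does the pointwise tensor inequality $g\ge c\,(h_{q_0}+ds^2)$ on the slab control the actual distance.  This follows immediately from $|s'|\le|\gamma'|_g$ once the transverse integral is known to diverge, but it is the one place where your sketch is thin.
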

The proof will be given in the next subsection. A fully explicit
form for the metric appears later, see \Ref{explicit}.

Regarding the method of proof of completeness of metrics admitting a non-compact cohomogeneity
one group action by isometries, we note the following. The degree of closeness of the group to being abelian can be measured by the number of vanishing Lie bracket relations in some basis for its Lie algebra. Forming the analog of this basis as part of a suitable left-invariant orthogonal frame on the manifold, this number corresponds to the number of closed, hence locally exact, $1$-forms in the dual coframe. The primitives of these forms can be used as coordinate functions forming part of a coordinate system on an open and dense chart domain. We can call these the ``trivial" coordinates as their values along a finite length curve are fairly easy to bound. This approach was already employed in \cite{a-m2,mr}. However, in the first of these references we were only able to prove completeness for a K\"ahler-Einstein metric under the action of a {\em quotient} of the dimension $3$ Heisenberg group, with the quotient being employed in part to mod-out the remaining ``nontrivial" coordinate function. But as we will see, for K\"ahler-Ricci solitons under a Heisenberg group action, we bound that coordinate function as well, in spite of the nontriviality of the coordinate expression of the associated coframe $1$-form. One however can avoid the need to do so and still prove completeness, see
Remark~\ref{quotient}.

\vspace{-.205in}
\subsection{Length of an escaping curve}
As the local part has already been established, aside from the properties of $f$ and its gradient,
all that remains is to prove the completeness claim for $g$.
For this we have
\begin{prop}\lb{escape1}
A finite length curve on $(M,g)$ of Theorem~\ref{sol-Heis2}
cannot leave every compact set.
\end{prop}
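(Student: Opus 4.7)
The plan is to introduce global coordinates on $M = \mathcal{H}_{2n+1}\times(q_a,q_b)$ and show that a finite length curve is trapped in a compact coordinate box. I parameterize $\mathcal{H}_{2n+1}$ as $\mathbb{R}^{2n+1}$ with coordinates $(x_i,y_i,z)$ in which the left-invariant coframe reads $\sigma_i=dx_i$, $\rho_i=dy_i$, and $\zeta = dz - \sum_i y_i\,dx_i$, so that $d\zeta = \sum_i\sigma_i\wedge\rho_i$ is consistent with the only nontrivial Heisenberg bracket $[X_i,Y_i]=-Z$. For a smooth curve $\gamma(s)=(x_i(s),y_i(s),z(s),q(s))$ on $[0,S)$ of length $L<\infty$, the metric \Ref{g-phi} has orthonormal coframe $\sqrt{\phi}\,\sigma_i,\ \sqrt{\phi}\,\rho_i,\ \sqrt{\phi'/\phi^{n-1}}\,\zeta,\ \sqrt{\phi^{n-1}\phi'}\,dq$, and since $|\hat e_a(\dot\gamma)|\le|\dot\gamma|_g$ for each orthonormal $\hat e_a$, integration yields the four estimates
\begin{equation*}
\int_0^S\!\sqrt{\phi}\,|\dot x_i|\,ds,\ \int_0^S\!\sqrt{\phi}\,|\dot y_i|\,ds,\ \int_0^S\!\sqrt{\phi'/\phi^{n-1}}\,\bigl|\dot z - {\textstyle\sum_i} y_i\dot x_i\bigr|\,ds,\ \int_0^S\!\sqrt{\phi^{n-1}\phi'}\,|\dot q|\,ds\ \le\ L.
\end{equation*}

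The first step is to trap $q$ in a compact subinterval of $(q_a,q_b)$. Fixing any $q_\ast\in(q_a,q_b)$, set $Q(q) = \int_{q_\ast}^q\sqrt{\phi(u)^{n-1}\phi'(u)}\,du$; the last estimate gives $|Q(q(s))-Q(q(0))|\le L$, so it suffices to verify that $Q\to\pm\infty$ at the two endpoints. Since $\phi$ is strictly increasing, changing variables via $d\phi=F_n(\phi)\,dq$ yields $Q = \int\sqrt{\phi^{n-1}/F_n(\phi)}\,d\phi$. The Taylor expansion of $H_n$ about $0$ vanishes to order $n+2$, so $F_n(\phi)\sim \tfrac{2}{n+2}\phi^{n+1}$ as $\phi\to 0^+$, making the integrand asymptotic to $\mathrm{const}/\phi$, which is non-integrable at $0$; at the other end the polynomial part of $H_n$ dominates, giving $F_n(\phi)\sim 2\phi^n$ for large $\phi$, so the integrand is asymptotic to $1/\sqrt{2\phi}$ and again non-integrable. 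Hence $q(s)$ stays in a compact subinterval, and both $\phi$ and $\phi'$ are pinched between positive constants along $\gamma$.

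Once $\phi$ is bounded below away from zero, the first two estimates above immediately give $\int_0^S|\dot x_i|\,ds,\ \int_0^S|\dot y_i|\,ds<\infty$, so the \emph{trivial} coordinates $x_i,y_i$ have finite total variation and stay in a compact box. The main obstacle, flagged in the paragraph preceding the proposition, is the \emph{nontrivial} coordinate $z$: since $\zeta$ is not closed, the third estimate controls $\dot z - \sum_i y_i\dot x_i$ rather than $\dot z$ itself. The resolution is to split
\begin{equation*}
z(s)-z(0) \ =\ \int_0^s\!\bigl(\dot z - {\textstyle\sum_i} y_i\dot x_i\bigr)\,d\tau\ +\ \int_0^s\!{\textstyle\sum_i} y_i\dot x_i\,d\tau.
\end{equation*}
The first integral is uniformly bounded by the third estimate, since $\phi'/\phi^{n-1}$ is also pinched between positive constants; the second is bounded because each $y_i$ is already known to be bounded along $\gamma$ while $\dot x_i\in L^1$. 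Thus $z(s)$ stays uniformly bounded, the image of $\gamma$ lies in a compact subset of $M$, and the proposition follows.
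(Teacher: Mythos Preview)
Your proof is correct and follows essentially the same approach as the paper's: bound $q$ in a compact subinterval by showing $\int\sqrt{\phi^{n-1}/F_n(\phi)}\,d\phi$ diverges at both $0$ and $\infty$, use the resulting positive lower bound on $\phi$ to trap the trivial coordinates $x_i,y_i$, and then handle the nontrivial coordinate $z$ by splitting off the $\zeta$-term and controlling the remainder using the already-established bounds on $y_i$ and $\int|\dot x_i|$. The only differences are cosmetic (you use the coframe convention $\zeta=dz-\sum y_i\,dx_i$ where the paper uses $\zeta=\sum x_i\,dy_i-dz$, and you invoke compactness to pinch $\phi'/\phi^{n-1}$ rather than the paper's monotonicity argument).
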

\begin{proof}
It follows from \Ref{domain0} that
%$q_b$ is finite and
$\lim_{q\to q_b^-}\phi(q)=\infty$.
Moreover, for the inverse $q(\phi)$ of $\phi$ on $(0,\infty)$, we have
\be\lb{far1}
\int_{q_0}^{q_b}\sqrt{\phi^{n-1}(q)\phi'(q)}\,dq=\int_{\phi_0}^{\infty}\sqrt{\phi^{n-1}q'(\phi)}\,d\phi
=\int_{\phi_0}^{\infty}\sqrt{\phi^{n-1}/F_n(\phi)}\,d\phi=\infty
\end{equation}
as the Taylor remainder estimate for $H_n(\phi)$ easily shows $\sqrt{\phi^{n-1}/F(\phi)}$ is larger than a constant multiple of $1/\phi$. For the same reason
\be\lb{other-far}
\int_{q_a}^{q_0}\sqrt{\phi^{n-1}(q)\phi'(q)}\,dq=\int_0^{\phi_0}\sqrt{\phi^{n-1}q'(\phi)}\,d\phi
=\int_0^{\phi_0}\sqrt{\phi^{n-1}/F_n(\phi)}\,d\phi=\infty.
\end{equation}

We now choose a left-invariant frame for $\mathcal{H}_{2n+1}$ which is given
in coordinates $x_i$, $y_i$, $z$ by $X_i=\partial_{x_i}$, $Y_i=\partial_{y_i}+x_i\partial_z$,
$Z=-\partial_z$, to which we will add on $M$ the vector field $\partial_q$.
The domain of these coordinate functions is open and dense in $M$.
The corresponding left invariant coframe
is  $\sig_i=dx_i$, $\rho_i=dy_i$, $\zeta=\sum_i x_idy_i-dz$, $dq$.
Given a smooth curve $\gamma(s):I\to M$, with $I=[p,r]$ a closed interval, having
coordinate presentation $(x(s),y(s),z(s),q(s))$, we have
\begin{align*}
\gamma'&=\sum_{i=1}^n(x_i'\partial_{x_i}+y_i'\partial_{y_i})+z'\partial_z+q'\partial_q\\
&=\sum_{i=1}^n(\sig_i(\gamma')X_i+\rho_i(\gamma')Y_i)+\zeta(\gamma')Z+dq(\gamma')\partial_q
\end{align*}
so that for each $s\in I$ and each $i=1,\ldots n$
\begin{align*}
g\Big(\gamma'(s),\fr{X_i}{|X_i|}\Big|_{\gamma(s)}\Big)&=x_i'(s)\sqrt{\phi(q(s))},\\
g\Big(\gamma'(s),\fr{Y_i}{|Y_i|}\Big|_{\gamma(s)}\Big)&=y_i'(s)\sqrt{\phi(q(s))},\\
%g\Big(\gamma',\fr{X_3}{|X_3|}\Big)&=(xy'-z')\sqrt{\phi'},
g\Big(\gamma'(s),\fr{\partial_q}{|\partial_q|}\Big|_{\gamma(s)}
\Big)&=q'(s)\sqrt{\phi(q(s))^{n-1}\phi'(q(s))}.
\end{align*}
It follows via the Cauchy-Schwartz inequality that for the length $L(\gamma)$ of $\gamma$
one has lower bounds
\begin{align}
L(\gamma)=\int_p^r|\gamma'(s)|\,ds&\ge\inf_I\Big(\sqrt{\phi(q(s))}\Big)\,\Big|\!\int_p^r x_i'(s)\,ds\Big|,\lb{x-prime1}\\
L(\gamma)&\ge\inf_I\Big(\sqrt{\phi(q(s))}\Big)\, \Big|\!\int_p^r y_i'(s)\,ds\Big|,\lb{y-prime1}\\
%L(\gamma)&\ge\Big|\int_I\sqrt{\phi'(q)}(xy'-z')\,ds\Big|,\\
L(\gamma)&\ge\Big|\int_p^r\sqrt{\phi(q(s))^{n-1}\phi'(q(s))}q'(s)\,ds\Big|,\lb{q-prime1}
\end{align}
for $i=1,\ldots n$.
Now if $\gamma$ has finite length, \Ref{q-prime1} via the change of variable $q=q(s)$, along with \Ref{far1} and \Ref{other-far}, imply that $q(s)$ is bounded away from $q_a$ and $q_b$. Thus $\phi(q)$ is bounded away from zero along the curve. Therefore, \Ref{x-prime1}-\Ref{y-prime1} imply that $x_i(s)$ and $y_i(s)$ are bounded, for any $i=1,\ldots n$.
%Also, $z(s)$ is always bounded for our $\mathcal{G}$, and hence for $M$.

For the coordinate $z$, we have
\begin{align*}
g\Big(\gamma'(s),\fr Z{|Z|}\Big|_{\gamma(s)}\Big)&=\sqrt{F_n(\phi(q(s)))/\phi^{n-1}(q(s))}\zeta(\gamma'(s))\\
&=\sqrt{F_n(\phi(q(s)))/\phi(q(s))^{n-1}}\Big(\sum_{i=1}^n x_i(s)y_i'(s)-z'(s)\Big).
\end{align*}
So
\[
L(\gamma)\ge\inf_I\sqrt{F_n(\phi(q(s)))/\phi(q(s))^{n-1}}\int_p^r\Big|\sum_{i=1}^n (x_i(s)y_i'(s))-z'(s)\Big|\,ds.
\]
First, the infimum of the square root term in this expression
is bounded away from zero as before: we already know that
$q(s)$ is bounded away from $q_a$ and hence
$\phi(q(s))$ is bounded away from zero, whereas
$\sqrt{F_n(\phi)/\phi^{n-1}}$ is positive for $\phi>0$,
and is increasing as a function of $\phi$
due to an argument similar to the proof of Lemma~\ref{taylor}.

Next, if $|x_i(s)|\le M_i$
%and $|y_i(s)|\le N_i$  for nonnegative constants $M_i$,
%$N_i$,
then
\[
|\sum_{i=1}^n (x_i(s)y_i'(s))-z'(s)|\ge |z'(s)|-|\sum_{i=1}^n x_i(s)y_i'(s)|\ge
|z'(s)|-\sum_{i=1}^n M_i|y_i'(s)|
\]
Now noting that in fact $\int_p^r |y_i'(s)|\,ds\le L(\gamma)/\inf\limits_I\Big(\sqrt{\phi(q(s))}\Big)$ for all $i=1,\ldots n$,
we see that
\begin{multline*}
\Big|\int_p^r z'(s)\,ds\Big|\le\int_p^r|z'(s)|\,ds\\
\le L(\gamma)/\inf_I\sqrt{F_n(\phi(q(s)))/\phi(q(s))^{n-1}}
+\left(L(\gamma)/\inf_I\Big(\sqrt{\phi(q(s))}\Big)\right)\sum_{i=1}^nM_i.
\end{multline*}
Thus $z(s)$ is bounded as well.

It follows that any curve with domain a closed interval of length $\le L$ starting
at a given point has domain contained in a (compact) four dimensional
coordinate cube. Hence the same holds if the curve is instead defined
on a semi-closed  bounded or unbounded interval.

%We can thus conclude that a finite length curve  in $M$ cannot leave every
%compact set.
\end{proof}

By  Proposition~\ref{escape1}, a curve %with domain $[p,\infty)$
that leaves every compact set has length bounded below by any finite number, i.e. its length is
infinite. Hence the metric $g$ in Theorem~\ref{sol-Heis2} is complete.

As for the remaining characterization of $f$, we know from
\Ref{f-phi} that $f'(q)=
%\fr{df}{dt}\fr{dt}{dq}=-\al
%ca_1^{-2}=-a_1^{2n-2}c^2=
-\phi'(q)$, so that up to an additive constant $f$ coincides with $\phi$.
Therefore $f'(\ta)=-\phi'(\ta)=-\phi'(q)\fr{dq}{dt}\fr{dt}{d\ta}=-\sqrt{\phi'(q)/(2\phi^{n-1}(q))}
=-\sqrt{F_n(\phi)/(2\phi^{n-1})}<0$, so $f(\ta)$ has no critical points.
This completes the proof of Theorem~\ref{sol-Heis2}.

\begin{remark}\lb{quotient}
Note that a simpler version of the proof of Theorem~\ref{sol-Heis2},
bounding only the ``trivial" coordinates $x_i$, $y_i$, will also apply to the case in which the
group acting is the quotient group considered in \cite{a-m2}, namely
$\mathcal{H}_{2n+1}/\tilde{\mathbb{Z}}$, where $\tilde{\mathbb{Z}}$ is the discrete subgroup of the center of $\mathcal{H}_{2n+1}$, given by
\[
\tilde{\mathbb{Z}}:= \left\{\begin{bmatrix} 1 & \bf{0} & 2\pi k\\ 0 & \,\mathrm{I}_n & \bf{0}\\ 0 & 0 & 1  \end{bmatrix} |\ k\in\mathbb{Z} \right\}.
\]
For this quotient group the center $K$ is isomorphic to $SO(2)$,
whose transitive action on the unit circle $S^{1}$ extends to a linear action on $V\!\smallsetminus\!\{0\}:=\mathbb{R}^{2}\!\smallsetminus\!\{0\}$.
Employing the notation of Section~\ref{sec:coho}, the manifold in that case
has the form $(\mathcal{H}_{2n+1}/\tilde{\mathbb{Z}})\times_K (D^2\!\smallsetminus\!\{0\})$,
where $D^2\subset V$ is the $2$-disk.
% and of the product are identified according to $(g,v)\sim(gk^{-1},kv)$
%for $k\in K$).
%$\mathcal{G}$ acts on $M$ by left multiplication on the first factor,
%while $K$ acts by sending $(g,p)\to (gk^{-1}, kp)$.
%The action of $\mathcal{G}$ has trivial isotropy at any points on its orbits,
%and the latter are all regular.
%, but isotropy $K$ at a
%point of the singular orbit $\mathcal{G}/K\approx \mathbb{R}^2$.
However, $\mathcal{H}_{2n+1}\times I$, $I$ an open interval, is the universal
covering for this manifold, so its completeness would then be implied as well
from such a proof. This alternative route to a proof would save the need
to explicitly bound the $z$ coordinate as we have done.

A different quotient of $M$ will be considered in subsection~\ref{asymp}.
\end{remark}

It is known that for a complete gradient soliton metric, the soliton vector field
is also complete \cite{zh}. We give an independent proof of this for completeness, and because we will later allude to one of the ingredients of the proof.
\begin{prop}
For $f$ as in Theorem~\ref{sol-Heis2}, $\n f$ is complete.
\end{prop}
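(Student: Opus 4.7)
The plan is to work in the open dense coordinate chart $(x_i,y_i,z,q)$ employed in the proof of Proposition~\ref{escape1}, in which $f=-\phi(q)$ up to an additive constant. First I would compute $\nabla f$ explicitly. Since the metric \Ref{g-phi} makes $\partial_q$ orthogonal to each of $X_i,Y_i,Z$ and satisfies $g(\partial_q,\partial_q)=\phi^{n-1}\phi'$, a direct calculation gives
\[
\nabla f \;=\; -\,\frac{\phi'(q)}{\phi(q)^{n-1}\phi'(q)}\,\partial_q \;=\; -\,\phi(q)^{-(n-1)}\,\partial_q.
\]
Consequently every integral curve $\gamma(s)$ of $\nabla f$ has constant $x_i,y_i,z$ coordinates, while $q(s)$ obeys the autonomous scalar ODE $q'(s)=-\phi(q)^{-(n-1)}$ and is strictly monotone decreasing. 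Because $\nabla f$ and the chart are both left-$\mathcal{H}_{2n+1}$-invariant (as $f$ is a function of the invariant coordinate $q$), completeness on this dense chart will imply completeness on all of $M$.

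Completeness of $\nabla f$ therefore reduces to showing that the maximal interval of existence of this ODE is all of $\mathbb{R}$. Separating variables, the forward and backward lifetimes starting at $q_0$ equal
\[
s_+ \;=\; \int_{q_a}^{q_0}\phi^{n-1}(q)\,dq, \qquad |s_-| \;=\; \int_{q_0}^{q_b}\phi^{n-1}(q)\,dq,
\]
and I would apply the change of variables $q=q(\phi)$ with $q'(\phi)=1/F_n(\phi)$, as in \Ref{far1}--\Ref{other-far}, to recast these as $\int_0^{\phi_0}\phi^{n-1}/F_n(\phi)\,d\phi$ and $\int_{\phi_0}^{\infty}\phi^{n-1}/F_n(\phi)\,d\phi$.

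The core step is to show both improper integrals diverge, a Taylor-remainder argument of the same flavor as the one used in Lemma~\ref{taylor} and the displays \Ref{far1}--\Ref{other-far}. At $\phi\to 0^+$, writing $H_n(\phi)=-\sum_{k\ge n+2}(-\phi)^k/k!$ yields $F_n(\phi)\sim \tfrac{2}{n+2}\phi^{n+1}$, so the integrand behaves like $\phi^{-2}$ near $0$ and the integral diverges; at $\phi\to\infty$ the $e^{-\phi}$ contribution is negligible and the top-degree term in $H_n$ yields $F_n(\phi)\sim 2\phi^n$, so the integrand behaves like $(2\phi)^{-1}$ and the integral again diverges. Combining these two divergences gives $s_+ = \infty$ and $|s_-| = \infty$, so every maximal integral curve of $\nabla f$ is defined on all of $\mathbb{R}$.

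The only mildly delicate point, which I expect to be the main (if minor) obstacle, is the parity bookkeeping in the sign of $H_n$ and hence in the asymptotics of $F_n$ at $\phi=0$, since the sign of the leading tail term depends on the parity of $n$; however this is already handled in the same manner as in the proof of Lemma~\ref{taylor} and requires no essentially new input. With this bookkeeping in place, the argument is complete.
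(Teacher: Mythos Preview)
Your proposal is correct and follows essentially the same route as the paper: compute $\nabla f$ as a multiple of $\partial_q$, reduce completeness to the scalar ODE $q'(s)=\pm\phi(q)^{-(n-1)}$, separate variables, change to the $\phi$ variable via $q'(\phi)=1/F_n(\phi)$, and show that $\int \phi^{n-1}/F_n(\phi)\,d\phi$ diverges at both $0$ and $\infty$. The only difference is in how the divergence is established: you argue via the endpoint asymptotics $F_n(\phi)\sim \tfrac{2}{n+2}\phi^{n+1}$ near $0$ and $F_n(\phi)\sim 2\phi^n$ near $\infty$, whereas the paper proves the single global inequality $\phi^{n-1}/F_n(\phi)>1/(2\phi)$ directly from the recursion $F_n(\phi)=-(n+1)F_{n-1}(\phi)+2\phi^n$ and the positivity of $F_{n-1}$, which handles both ends at once and avoids any parity bookkeeping.
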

\begin{proof}
We have
\begin{align*}
\n f=f'(\ta)(\kk-\tT)&=\fr1{\sqrt{2}\al}f'(t)(\kk-\tT)=-\fr1{\sqrt{2}\al}\al c\fr{\sqrt{2}}{\al}\partial_t\\
&=-\fr c{\al}\fr{dq}{dt}\partial_q=-\fr{a_1^2c}{\al}\partial_q=-\fr1{a_1^{2n-2}}\partial_q
=\fr1{\phi(q)^{n-1}}\partial_q.
\end{align*}
For a curve $\gamma$ as before, the only limitation on the range of the
parameter $s$ may come from the equation $q'(s)=1/\phi(q(s))^{n-1}$.
But
\[
\tilde{s}-s_0=\int_{s_0}^{\tilde{s}}\phi(q(s))^{n-1}q'(s)\,ds
=\int_{q_0}^{\tilde{q}}\phi(q)^{n-1}\,dq=\int_{\phi_0}^{\tilde{\phi}}\fr{\phi^{n-1}}{F_n(\phi)}\,d\phi,
\]
with $\tilde{\phi}=\phi(\tilde{q})=\phi(q(\tilde{s}))$. For $\tilde{\phi}=\infty$ or $\tilde{\phi}=0$,
the integral on the right diverges, respectively, to $\pm\infty$, as can be seen by showing
that
\be\lb{better}
\phi^{n-1}/F_n(\phi)>1/(2\phi).
\end{equation}
This follows due to the formula
\be\lb{induc}
F_n(\phi)=-(n+1)F_{n-1}(\phi)+2\phi^n
\end{equation}
which is immediately verifiable from the definition of $F_n(\phi)$.
From this we see that
\[
\frac{\phi^{n-1}}{F_n(\phi)}=\frac{\phi^{n-1}}{-(n+1)F_{n-1}(\phi)+2\phi^n}>
\frac{\phi^{n-1}}{2\phi^n}=\frac1{2\phi}
\]
because both $F_{n-1}$ and $F_n$ are positive.
\end{proof}
%This completes the proof of Theorem~\ref{sol-Heis2}.

\subsection{Curvature and other properties}\lb{props}
%of the $\mathcal{H}_{2n+1}$ K\"ahler-Ricci solitons}\lb{props}

\subsubsection{Explicit metric form}
Observe that $\phi(q)$, being obviously smooth, is increasing in $(q_a,q_b)$, so that
one can use $\phi$ as a coordinate, giving an explicit form for these Ricci solitons:
\be\lb{explicit}
\begin{aligned}
&g=\sum_{i=1}^n\phi(\sig_i^2+\rho_i^2)+\fr{F_n(\phi)}{\phi^{n-1}}\zeta^2
+\fr{\phi^{n-1}}{F_n(\phi)}\,d\phi^2,\\[2pt]
&F_n(\phi)=2(-1)^{n+1}(n+1)!\fr1{\phi}
\Big[{\textstyle\sum_{k=0}^{n+1}\fr{(-1)^k}{k!}}\phi^k-e^{-\phi}\Big],\\[2pt]
&f=-\phi+\mathrm{constant},
\end{aligned}
\end{equation}
where $\phi\in\mathbb{R}^+$.

\subsubsection{Ricci curvature}
Combining \Ref{Ric-frame}, \Ref{1}, \Ref{2} and \Ref{JdfK1}, we see that the only nonzero
components of the Ricci form satisfy
\[
\rho(\xx_i,\yy_i)=\lam +Nf'(\ta),\qquad
\rho(\kk,\tT)=\lam-2f''(\ta).
\]
One computes
\begin{align*}
\rho&=\fr12\left(\fr{F_n(\phi)}{\phi^n}-2\right)\sum_{i=1}^n\hat{\xx}_i\we\hat{\yy}_i
+\fr12\left(\fr{F_n'(\phi)}{\phi^{n-1}}-(n-1)\fr{F_n(\phi)}{\phi^n}-2\right)\hat\kk\we\hat\tT\\
&=\fr12\left(\fr{F_n(\phi)}{\phi^n}-2\right)\sum_{i=1}^n\hat{\xx}_i\we\hat{\yy}_i
+\fr12\left(-(\phi+n)\fr{F_n(\phi)}{\phi^n}+2\phi-2\right)\hat\kk\we\hat\tT\\
&=\fr12\left(\fr{F_n(\phi)}{\phi^n}-2\right)\phi\sum_{i=1}^n\sig_i\we\rho_i
+\fr12\left(-(\phi+n)\fr{F_n(\phi)}{\phi^n}+2\phi-2\right)\,d\phi\we\zeta,\\
\end{align*}
where the prime is the $\phi$-derivative, and the second step follows from the easily
verifiable formula
\[ F_n'(\phi)=-\fr{\phi+1}{\phi}F_n(\phi)+2\phi^n. \]
Since we have seen that $F_n(\phi)/\phi^{n-1}<2\phi$, it easily follows that
the Ricci curvature is negative definite. On the other hand the components of
the Hessian of $f$ are $-Nf'=-F_n(\phi)^{3/2}/(\sqrt{2}\phi^{(n+1)/2})$
and $2f''=-2F_n'(\phi)$, so the Hessian is also negative definite. It follows that
\[ -g<\mathrm{Ric}<0. \]

\subsubsection{Scalar Curvature}
The scalar curvature is thus
\begin{align*}
\mathrm{Scal}&=\fr{F_n(\phi)}{\phi^n}+\fr{F_n'(\phi)}{\phi^{n-1}}-2n-2\\
&=-\fr{F_n(\phi)}{\phi^{n-1}}+2(\phi-n-1),
\end{align*}
and satisfies
\[ -2m<\mathrm{Scal}<0, \]
with the lower bound also being the $\phi\to 0^+$
limit of $\mathrm{Scal}$. This lower bound is known to be sharp for normalized nontrivial
expanding gradient Ricci solitons \cite{prs,zhs}.
%Of course one deduces $\mathrm{Scal}<0$, and this can be shown directly,
%for example, by induction, separately for n even and n odd.

\subsubsection{Bound on the distance function}
Our bound \Ref{q-prime1} holds for any curve, hence in particular for a minimizing geodesic, so
that together with~\Ref{better} it gives a lower bound on the distance function:
\begin{align}
d_g(o,p)\ge \Big|\int_{q_o}^{q_p}\sqrt{\phi^{n-1}(q)\phi'(q)}\,dq \Big|&=
\Big|\int_{\phi_o}^{\phi_p} \sqrt{\phi^{n-1}/F_n(\phi)}\,d\phi\Big|\nonumber\\
&>\Big|\int_{\phi_o}^{\phi_p}\sqrt{1/2\phi}\,d\phi\Big| = \Big|\sqrt{2}\phi^{1/2}\Big|_{\phi_o}^{\phi_p}\Big|.\lb{dist}
\end{align}
%where $A_n=\sqrt{\fr{n+2}2}$. In the region $\phi>n+2$ this can be improved via \Ref{better}.
Thus $-f=\phi$ grows asymptotically at most as a quadratic expression in
the distance function. However there is no such quadratic {\em lower bound}, as
the Ricci curvature does not decay quadratically near $\phi=0$,
a conclusion one can also deduce from the classification results in \cite{cds}.
Note that $f$ is not a proper function. See the next subsection for more on the
asymptotics of the solitons.

%\vspace{.1in}
\subsubsection{The asymptotic scalar curvature ratio}
It follows from considerations mentioned in the next subsection that $F_n(\phi)/\phi^{n-1}\to 0$
as $\phi\to0^+$. Therefore, for any base point $p$,
$\lim_{\phi\to0^+}2\mathrm{Scal}\Big(\phi^{1/2}-\phi_p^{1/2}\Big)^2=-4m\phi_p$.
By \Ref{dist}, the asymptotic scalar curvature ratio
%\setstackgap{S}{0pt}
%$\stackunder{\limsup}{\scriptscriptstyle d_g(o,p)\to\infty}\mathrm{Scal}(p)d_g(o,p)^2$
%%$\limsup\limits_{\raisebox{4ex}{\text{$\scriptscriptstyle %%d_g(o,p)\to\infty$}}}\mathrm{Scal}(p)d_g(o,p)^2$
$\stackrel[\scriptscriptstyle d_g(o,p)\to\infty]{}{\limsup}\mathrm{Scal}(p)d_g(o,p)^2$,
which is base-point independent, is bounded below by this number, which could be made
arbitrarily close to $0$ by a choice of base point. On the other hand this quantity
cannot be positive, as $\mathrm{Scal}<0$. It follows that the asymptotic scalar curvature ratio
vanishes.

%the $\limsup$ of the square of the right hand side of \Ref{dist} times the scalar curvature,
%which is $\max\{-4m(m-1),-4m\phi_0\}$. As this expression can be made arbitrary close to zero
%by a choice of initial point, the asymptotic scalar curvature, which is independent of this choice and
%cannot be positive in our case, must be zero.

\vspace{-.2in}
\subsubsection{Soliton potential behaviour}
Writing the metric in the form $dr^2+g_r$, with $r$ a primitive of $\sqrt{\phi^{n-1}/F_n(\phi)}$,
we see that $f'(r)=-\fr1{dr/d\phi}=-\sqrt{F_n(\phi)/\phi^{n-1}}<0$, and $f''(r)<0$ also holds, due to an argument mentioned earlier analogous to the proof of Lemma~\ref{taylor}. This is similar to
soliton potential estimates obtained in \cite{bdw, bdgw, w2} under the assumption that the
manifold has a singular orbit.

\subsection{Asymptotics}\lb{asymp}

Strictly speaking, the manifold $M$ in Theorem~\ref{sol-Heis2}
has one end without a standard type of asymptotics. However, one can
quotient $\mathcal{H}_{2n+1}$, by, say, the subgroup consisting
of matrices with integer coefficients, to obtain a compact
nilmanifold $N$. Then $N\times\mathbb{R}^+$ will have two ends,
and the quotient Ricci soliton metric will have different asymptotic behaviour
in each of them, which we now describe in the zeroth order, i.e. at the level
of the metric and not its covariant derivatives. The coordinates used below are those
employed in the proof of Theorem~\ref{sol-Heis2}.

Since $\fr{F_n(\phi)}{\phi^{n-1}}\sim 2\phi$ as $\phi\to\infty$,
as can be seen inductively via \Ref{induc},
the metric $g$ has at that end the asymptotics of a cone metric
over the quotient of a left invariant metric.
\begin{align*}
g\sim g_\infty&=\fr 1{2\phi}d\phi^2+2\phi\,\zeta^2+\phi\sum_{i=1}^n(\rho_i^2+\sig_i^2)\\
&=dr^2+r^2[\zeta^2+\tfrac12\sum_{i=1}^n(\rho_i^2+\sig_i^2)]\\
&=dr^2+r^2\Big[\Big(\sum_{i=1}^n x_i\,dy_i-dz\Big)^2+\tfrac12\sum_{i=1}^n(dx_i^2+dy_i^2)\Big],
\end{align*}
with $r=\sqrt{2\phi}$. At least for dimension four, $g_0$ has sectional curvature in the plane of $X_{\1}$, $Y_{\1}$ approaching zero as $r\to\infty$, whereas the sectional curvature in the plane of $\partial_r$, $\partial_z$ vanishes.

On the other end, $\fr{F_n(\phi)}{\phi^{n-1}}\sim \fr{2\phi^2}{n+2}$ as $\phi\to 0^+$,
as is easiest to see by directly applying L'h{\^o}pital's rule inductively,
so $g$ has the following asymptotics at that end
\begin{align*}
g\sim g_0&=\fr{n+2}{2\phi^2}d\phi^2+\fr{2\phi^2}{n+2}\,\zeta^2
+\phi\sum_{i=1}^n(\rho_i^2+\sig_i^2)\\
&=2^{-1}(n+2)\,dr^2+2(n+2)^{-1}e^{2r}\zeta^2+e^r\sum_{i=1}^n(\rho_i^2+\sig_i^2)\\
&=2^{-1}(n+2)\,dr^2+2(n+2)^{-1}e^{2r}\Big(\sum_{i=1}^n x_i\,dy_i-dz\Big)^2+e^r\sum_{i=1}^n(dx_i^2+dy_i^2),
\end{align*}
with $r=\log\phi$. This K\"ahler metric has constant holomorphic sectional curvature $-2/(n+2)$, 
as can be checked, for example, via methods similar to those we have been using for
the Ricci solitons (see next subsection for sectional curvature matters).
% is the form of an asymptotically complex hyperbolic (ACH)
%metric, of the form $\fr{n+2}2(dr^2+e^{2r}\tilde{\zeta}^2+e^rd\tilde{\zeta}(\cdot, J_0\cdot))$,
%with $\tilde{\zeta}=\fr2{n+2}\zeta$  and $J_0$ the standard complex structure (see~\cite[formula %(2.2)]{bh}).
%Now $g_0$ is also a K\"ahler-Einstein metric, with Einstein constant $-1$,
%But in fact it is can be verified using methods similar to those we have used for the Ricci solitons
%that it has constant holomorphic sectional curvature $-2/(n+2)$ (see next section).
%Furthermore, one can check, at least in low dimensions, that those planes
%spanned by the frame vectors which are also $J$-invariant  have negative sectional curvature $-2/(n+2)$.
This number appears in the conjecture in the introduction, which will be discussed for $n=1$ at the end of the next subsection.

Finally, we note that the conclusions of this subsection regarding the two ends resemble that of the
(real) dimension $3$ Ricci soliton of \cite{r}, which also has one conical end along with one cuspoidal end.

\subsection{Sectional curvature bounds and singularity type}\lb{singul}

As the solitons given by \Ref{explicit} are expanding, the associated solution to
the Ricci flow, evolving by rescalings of pull-backs of the metric under diffeomorphisms,
is either of type IIb or of type III, in the sense of \cite{hm1}.
Which one of these it is, depends on the behavior of
the curvature norm along the flow solution. We outline a proof that it is of the
latter type, by way of giving sectional curvature bounds, including precise
ones in complex dimension two.

\subsubsection{Frame sectional curvatures}
The sectional curvatures of $g$ on the frame fields are given by
\begin{align}
&\mathrm{Sec}(\xx_i,\yy_i)=-2N^2=-F_n(\phi)/\phi^{n+1},\nonumber\\
&\mathrm{Sec}(\kk,\xx_i)=(N^2-2LN)/2=((n+1+\phi)F_n(\phi)/\phi^{n+1}-2)/4,\lb{sec}\\
&\mathrm{Sec}(\kk,\tT)=2(L'-L^2)=-[n(n+1)/2+n\phi+\phi^2/2]F_n(\phi)/\phi^{n+1}+n-1+\phi,\nonumber
\end{align}
where the prime denotes $\partial_\ta$, and the second of these expressions also equals $\mathrm{Sec}(\kk,\yy_i)=\mathrm{Sec}(\tT,\xx_i)=\mathrm{Sec}(\tT,\yy_i)$, and additionally is
a constant multiple of the curvature tensor when evaluated on the quadruple $\kk$, $\tT$, $\xx_i$, $\yy_i$. All other curvature values, unless determined by \Ref{sec} and the curvature
symmetries, in fact vanish.

\subsubsection{The curvature matrix}
These formulas are obtained as follows. We first derive the first expressions in each line of \Ref{sec}, given in terms of $N$ and $L$. Applying \Ref{brack1}-\Ref{rels2}, we see that the connection form is
\[\theta=\begin{pmatrix}
\theta_{00} & \theta_{01} & \theta_{02} & \cdots & \theta_{0n} \\
\theta_{10} & \theta_{11} & 0 & \cdots & 0 \\
\theta_{20} & 0 & \theta_{22} & \ddots & \vdots \\
\vdots & \vdots & \ddots & \ddots & 0 \\
\theta_{n0} & 0 & \cdots & 0 & \theta_{nn}
\end{pmatrix}\]
where the $\theta_{ij}$ are $2\times2$ blocks satisfying $\theta_{ij}=-\theta^T_{ji}$.
Since $B_i=C_i=F_i=G_i=0$ in the case of the Heisenberg group, and $a_i=b_i$ together with
\Ref{rels2} imply $A_i=D_i=-E_i=-H_i=N/2$, we get
\[\theta_{00}=\begin{pmatrix}
0 & L(\kf+\tf) \\[2pt]
-L(\kf+\tf) & 0
\end{pmatrix},\]
\[\theta_{ii}=\begin{pmatrix}
0 & \frac{N}{2}(\kf+\tf) \\[2pt]
\frac{-N}{2}(\kf+\tf) & 0
\end{pmatrix},\]
\[\theta_{0i}=\begin{pmatrix}
\frac{N}{2}(\xf_i-\yf_i) & \frac{N}{2}(\xf_i+\yf_i) \\[2pt]
-\frac{N}{2}(\xf_i+\yf_i) & \frac{N}{2}(\xf_i-\yf_i)   \\
\end{pmatrix}.\]
The curvature matrix is then given by
\[ \Omega = \theta\wedge\theta+d\theta.\]
Using \Ref{3} for the $\tau$ derivative of $N$ this gives
\[\Omega=\begin{pmatrix}
\Omega_{00} & \Omega_{01} & \Omega_{02} & \cdots & \Omega_{0n} \\
\Omega_{10} & \Omega_{11} & \theta_{10}\wedge\theta_{02} & \cdots & \theta_{10}\wedge\theta_{0n} \\
\Omega_{20} & \theta_{20}\wedge\theta_{01} & \Omega_{22} & \ddots & \vdots \\
\vdots & \vdots & \ddots & \ddots & \theta_{(n-1)0}\wedge\theta_{0n} \\
\Omega_{n0} & \theta_{n0}\wedge\theta_{01} & \cdots & \theta_{n0}\wedge\theta_{0(n-1)} & \Omega_{nn}
\end{pmatrix}\]
where
\begin{align*}
\Omega_{00}&=\sum_{i=0}^n\theta_{0i}\wedge\theta_{i0}+d\theta_{00},\\
&=(N^2-2LN)\begin{pmatrix}0&1\\ -1&0\end{pmatrix}\sum_{i=1}^n\xf_i\wedge\yf_i+(2L'-2L^2)\begin{pmatrix}0&1\\ -1&0\end{pmatrix}\kf\wedge\tf,\\
\Omega_{ii} &= \theta_{i0}\wedge\theta_{0i}+\theta_{ii}\wedge\theta_{ii}+d\theta_{ii},\\
&=-N^2\begin{pmatrix}0&1\\ -1&0\end{pmatrix}\sum_{j=1}^n(1+\delta_{ij})\xf_j\wedge\yf_j+(N^2-2LN)\begin{pmatrix}0&1\\ -1&0\end{pmatrix}\kf\wedge\tf,\\
\Omega_{0i} &= \theta_{00}\wedge\theta_{0i}+\theta_{0i}\wedge\theta_{ii}+d\theta_{0i},\\
&=\frac{N^2-2LN}{2}\left( \begin{pmatrix}1&0\\ 0&1\end{pmatrix}(\kf\wedge\xf_i+\tf\wedge\yf_i)+\begin{pmatrix}0&1\\ -1&0\end{pmatrix}(\kf\wedge\yf_i-\tf\wedge\xf_i)\right).
\end{align*}
Since the terms $\theta_{i0}\wedge\theta_{0j}$ for $i\ne j$ are proportional to $N^2$, this shows that the only terms appearing in the full curvature form are multiples
of the sectional curvatures given in \Ref{sec}, and from this the first equalities
of \Ref{sec} follows easily.

Next, the final formulas in each line of \Ref{sec} in terms of $F_n$ and $\phi$ are obtained
in a long calculation starting from the expressions for $L$ and $N$ in \Ref{L-N}
using $N_i=N$, $a_1=a_i=b_i$, $a_1^2=\phi$, $c^2=F_n(\phi)/\phi^{n-1}$, $\partial_\ta=2^{-1/2}a_1^{-2n}c^{-1}\partial_t$
and $\partial_t=\phi F_n(\phi)\partial_\phi$.

\subsubsection{Bounded sectional curvature and Singularity type}
We now claim that all the expressions appearing in \Ref{sec}
are bounded for $\phi\in(0,\infty)$. First,
$F_n(\phi)/\phi^{n+1}$ is positive, and bounded above by $2/(n+2)$ via the Taylor remainder
estimate for $e^{-\phi}$. Next, $F_n(\phi)/\phi^n$ is again positive, and is bounded above
by $2$, since we have seen that $\phi^{n-1}/F_n(\phi)>1/(2\phi)$. This shows that the
first two sectional curvatures above are bounded. For the third, it remains to show that
the expression $-(\phi^2/2)F_n(\phi)/\phi^{n+1}+\phi=\phi-F_n(\phi)/(2\phi^{n-1})$ is bounded. It
is bounded below by $\phi-\fr12(2\phi)=0$. Then the Taylor remainder estimate shows its absolute
value is no larger than the polynomial $\phi+\fr2{n+2}\phi^2$, making it bounded on any finite interval of the form $(0,s)$. Finally its exact expression shows it equals $\phi+(n+1)-\phi=n+1$
plus terms that are bounded above on any interval of the form $(r,\infty)$, $r>0$. This concludes
the proof of this claim.

It follows from this and from the lines after \Ref{sec} that not just the frame sectional curvatures, but {\em all} sectional curvatures of $g$ are bounded. Hence the corresponding Ricci flow solution has
sectional curvatures which are $O(t^{-1})$, where $t$ is the flow parameter, and thus it is of type III (cf. \cite{lo}).

Note that one can improve these bounds to show that the frame sectional curvatures
are negative, using the expressions in \Ref{sec}. For $\mathrm{Sec}(\xx_i,\yy_i)$
this follows from the above. For $\mathrm{Sec}(\kk, \xx_i)$ this can be shown by simplifying
the needed inequality using the definition of $F_n$, giving for odd $n$, for example,
$(n+1+\phi)P_{n+1}(\phi)-\phi^{n+2}/(n+1)!<(n+1+\phi)e^{-\phi}$,  where $P_{n+1}$ is the
$n+1$-th Taylor polynomial of $e^{-\phi}$. This inequality can then be
proved by differentiating both sides $n+2$ times and then using the exact same method
of the proof of Lemma~\ref{taylor}. Finally, for $\mathrm{Sec}(\kk,\tT)$, one isolates
$F_n/\phi^{n+1}$ from the required inequality, replaces $F_n$ by \Ref{induc},
then replaces $F_{n-1}(\phi)/\phi^n$ by the expression coming from the inequality
showing $\mathrm{Sec}(\kk,\xx_i)<0$, and proceeds from there to immediately verify the
claim.

The holomorphic bisectional curvature is similarly bounded, and
negative for each pair of the form $\kk+i\tT$, $\xx_i+i\yy_i$, $i=1,\ldots n$.

\subsubsection{Explicit sectional curvature bounds in dimension four}
In dimension 4 (the n=1 case) we now give, as mentioned in the introduction, sharp bounds on all
sectional curvatures.
\begin{prop}
For the Ricci soliton metric in \Ref{explicit} with $n=1$, the sectional curvature satisfies
\[ -2/3<\mathrm{Sec}(\al)<0. \]
\end{prop}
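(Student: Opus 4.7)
The plan is to reduce the computation of $\mathrm{Sec}(\al)$ for an arbitrary $2$-plane to a finite-parameter algebraic optimization and then to a scalar inequality in $\phi$, to be verified by the Taylor polynomial method of Lemma~\ref{taylor}.

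Fix a point $p\in M$ and write an orthonormal basis $\{e,f\}$ of $\al \subset T_pM$ as $e=\sum_i a_i e_i$ and $f=\sum_i b_i e_i$ relative to the K\"ahler frame $(e_1,e_2,e_3,e_4)=(\kk,\tT,\xx_1,\yy_1)$. From the explicit form of the curvature $2$-form $\Omega$ displayed in the preceding subsection, the only independent nonzero Riemann components (modulo the standard symmetries) are the three frame sectional curvatures in \Ref{sec} together with $R(\kk,\tT,\xx_1,\yy_1)$; applying the first Bianchi identity along with the K\"ahler identity $R(JX,JY,Z,W)=R(X,Y,Z,W)$ shows that $R(\kk,\tT,\xx_1,\yy_1)=2\,\mathrm{Sec}(\kk,\xx_1)$. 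Consequently $\mathrm{Sec}(\al)=R(e,f,f,e)$ becomes an explicit quadratic form in the Pl\"ucker coordinates $p_{ij}=a_ib_j-a_jb_i$ whose coefficients are rational functions of $\phi$ built from $F_1(\phi)/\phi^{2}$ and the multipliers in \Ref{sec}.

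Next, I exploit the $SO(2)\times SO(2)$ symmetry of the curvature tensor stemming from independent rotations in $\mathrm{span}(\kk,\tT)$ and in $\mathrm{span}(\xx_1,\yy_1)$, which is manifest from the structure of $\Omega$ as a matrix of $2$-forms with $J$-invariant diagonal blocks and off-diagonal blocks that transform tensorially. Combined with the K\"ahler angle $\theta\in[0,\pi/2]$ of $\al$, defined by $\cos\theta=|\om(e,f)|$, this reduces $\al$ modulo curvature-preserving symmetry to a representative governed by $\theta$ and at most one residual angle, in which $\mathrm{Sec}(\al)$ is an explicit trigonometric combination of the three frame sectional curvatures of \Ref{sec}. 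Extremizing over the residual angle and then over $\theta\in[0,\pi/2]$ reduces the problem to two one-variable inequalities in $\phi$ on $(0,\infty)$.

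The upper bound $\mathrm{Sec}(\al)<0$ is then immediate from the strict negativity of each frame sectional curvature, already established in the previous subsection. The lower bound $\mathrm{Sec}(\al)>-2/3$ reduces, after clearing the common denominator $\phi^{2}$ in \Ref{sec} with $n=1$, to an inequality of the form $P(\phi)<Q(\phi)\,e^{-\phi}$ with $P,Q$ explicit polynomials, structurally parallel to \Ref{tay-pol}. This is verified by differentiating both sides sufficiently many times so that the resulting derivative inequality is of constant sign on $(0,\infty)$, then integrating back while using the equality of lower-order derivatives at $\phi=0$, exactly as in the proof of Lemma~\ref{taylor}. Sharpness of the bound $-2/3$ follows from $F_1(\phi)/\phi^2\to 2/3$ as $\phi\to 0^+$, in agreement with the asymptotic constant holomorphic sectional curvature $-2/3$ of the metric at that end exhibited in subsection~\ref{asymp}. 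The principal obstacle is the symmetry reduction in the second step: although the block invariance of $\Omega$ is apparent, distinguishing holomorphic ($\theta=0$), Lagrangian ($\theta=\pi/2$), and generic planes and ruling out residual parameters requires a careful case analysis.
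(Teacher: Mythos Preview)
Your proposal has a genuine gap in the upper bound step. You claim that $\mathrm{Sec}(\al)<0$ is ``immediate from the strict negativity of each frame sectional curvature,'' but this is not so: the sectional curvature of a general $2$-plane is \emph{not} a nonnegative combination of the three frame values. The cross term $R(\kk,\tT,\xx_1,\yy_1)=2\,\mathrm{Sec}(\kk,\xx_1)$ enters with a coefficient that can be negative. In the paper's parametrization of decomposable unit $2$-forms by $(a_1,b_1)$ one has
\[
\mathrm{Sec}(\al)=\tfrac{(a_1+b_1)^2}{2}\mathrm{Sec}(\kk,\tT)+\tfrac{(a_1-b_1)^2}{2}\mathrm{Sec}(\xx_1,\yy_1)+(1+2a_1^2-4b_1^2)\,\mathrm{Sec}(\kk,\xx_1),
\]
and $1+2a_1^2-4b_1^2$ is negative on a nonempty region. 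The paper closes this by a genuinely $4$-dimensional argument: it writes the curvature operator on $\Lambda^2=\Lambda^2_+\oplus\Lambda^2_-$, finds that the two zero eigenvalues have self-dual eigenforms, that the $2\times 2$ block on $\mathrm{span}\{\kk\wedge\tT\pm\xx_1\wedge\yy_1\}$ has negative eigenvalues because its determinant
\[
\mathrm{Sec}(\kk,\tT)\,\mathrm{Sec}(\xx_1,\yy_1)-4\,\mathrm{Sec}(\kk,\xx_1)^2=\tfrac{4}{\phi^4 e^{\phi}}\bigl(2\cosh\phi-2-\phi^2\bigr)>0
\]
and its trace is negative, and then invokes the fact that a decomposable $2$-form is never self-dual. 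Your symmetry reduction does not supply this missing inequality.

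For the lower bound, your plan diverges from the paper and is too vague to assess. The paper does not reduce to a single Taylor-type inequality $P(\phi)<Q(\phi)e^{-\phi}$; instead it observes that each of $\mathrm{Sec}(\kk,\tT)$, $\mathrm{Sec}(\xx_1,\yy_1)$, $\mathrm{Sec}(\kk,\xx_1)$ is a negative \emph{increasing} function of $\phi$ with limits $-2/3,-2/3,-1/6$ at $\phi\to 0^+$, splits the $(a_1,b_1)$-square according to the sign of $1+2a_1^2-4b_1^2$, and on each piece bounds $\mathrm{Sec}(\al)$ from below by its $\phi\to 0^+$ limit (dropping the possibly negative-coefficient term on the bad region), obtaining $-2/3$ in both cases. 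Your proposal never specifies the polynomials $P,Q$, and since the minimization over plane parameters and over $\phi$ interact, it is not clear that a single scalar inequality of the claimed shape emerges after your symmetry reduction.
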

\begin{proof}
Let $\Lambda^2=\Lambda^2_+\oplus\Lambda^2_-$ be the usual decomposition of $2$-forms into self-dual
and anti-selfdual parts.
The curvature operator $\mathcal{R}$ acting on $\Lambda^2$ can be obtained from $\Omega$ using
\[ \Omega(e_i,e_j)_{kl} = \langle \mathcal{R}(e_i\wedge e_j), e_k\wedge e_l \rangle, \]
where we dropped here and below the hat notation for dual $1$-forms\footnote{Note that the matrix indices $kl$ can be related to the previously used indices of some $2\times 2$ matrix $\Omega_{pq}$ in
that $(k,l)\in\{(2p,2q), (2p+1,2q),(2p,2q+1), (2p+1,2q+1)\}$.}.
Writing it relative to the standard orthonormal basis of $\Lambda^2_\pm$, namely $\frac{1}{\sqrt{2}}(\kk\wedge\tT\pm\xx\wedge\yy),
\frac{1}{\sqrt{2}}(\kk\wedge\xx\mp\tT\wedge\yy),\frac{1}{\sqrt{2}}(\kk\wedge\yy\pm\tT\wedge\xx)$,
gives
\[ \begin{pmatrix} p & 0 & 0 & q & 0 & 0 \\
0&0&0&0&0&0\\
0&0&0&0&0&0\\
q&0&0&p-2r&0&0\\
0&0&0&0&r&0\\
0&0&0&0&0&r\end{pmatrix},\]
where
\begin{align*}p&=\frac{1}{2}(\mathrm{Sec}(\kk,\tT)+\mathrm{Sec}(\xx,\yy))+2\mathrm{Sec}(\kk,\xx) \\
q&=\frac{1}{2}(\mathrm{Sec}(\kk,\tT)-\mathrm{Sec}(\xx,\yy))\\
r&=2\mathrm{Sec}(\kk,\xx)
\end{align*}
This shows that $\mathcal{R}$ has two self-dual eigenforms with eigenvalue 0, and two anti-self-dual eigenforms
with eigenvalue $r$. On the orthogonal complement of these 4 eigenforms $\mathcal{R}$ is
\[ \begin{pmatrix} p & q \\
q&p-2r
\end{pmatrix},\]
The remaining two eigenvalues then have sum given by the trace $2p-2r=\mathrm{Sec}(\kk,\tT)+\mathrm{Sec}(\xx,\yy)<0$
and product given by the determinant $p^2-2pr-q^2=\mathrm{Sec}(\kk,\tT)\mathrm{Sec}(\xx,\yy)-4\mathrm{Sec}(\kk,\xx)^2
=\frac{4}{\phi^4e^\phi}(2\cosh(\phi)-2-\phi^2)>0$. Therefore the remaining two eigenvalues are negative.
Since a decomposable two-form $\alpha$ cannot be self-dual, it must be that $\mathrm{Sec}(\alpha)<0$.
Since it can easily be checked that any of the frame sectional curvatures approach zero as $\phi\to\infty$, zero is a sharp upper bound on the sectional curvature.

For a sharp lower bound on the sectional curvature, consider a unit-length decomposable two-form in the basis above, given by $\alpha=(a_1,a_2,a_3,b_1,b_2,b_3)$ where $\sum a_i^2=\sum b_i^2=\frac{1}{2}$, a condition that must hold for decomposable forms. Then
\[\mathrm{Sec}(\alpha)=a_1^2p+2a_1b_1q+b_1^2(p-2r)+b_2^2r+b_3^2r=a_1^2p+2a_1b_1q+b_1^2(p-3r)+\frac{r}{2}\]
is a quadratic function in the variables $a_1,b_1\in[-1/\sqrt{2},1/\sqrt{2}]$, which can also be written as
\[\mathrm{Sec}(\alpha)=\frac{(a_1+b_1)^2}{2}\mathrm{Sec}(\kk,\tT)+
\frac{(a_1-b_1)^2}{2}\mathrm{Sec}(\xx,\yy)+(1+2a_1^2-4b_1^2)\mathrm{Sec}(\kk,\xx).\]
It can be checked that the functions $\mathrm{Sec}(\kk,\tT)$, $\mathrm{Sec}(\xx,\yy)$, and $\mathrm{Sec}(\kk,\xx)$ are all negative {\em increasing} functions of $\phi\in\mathbb{R}^+$,
and satisfy
\[\lim_{\phi\to 0^+} \mathrm{Sec}(\kk,\tT) = -2/3, \quad \lim_{\phi\to 0^+} \mathrm{Sec}(\xx,\yy)=-2/3, \quad \lim_{\phi\to 0^+} \mathrm{Sec}(\kk,\xx) = -1/6. \]
The only coefficient which can be negative is $1+2a_1^2-4b_1^2$, so we split $[-1/\sqrt{2},1/\sqrt{2}]^2$ into two regions
\begin{align*} A&=\{(a_1,b_1)\in[-1/\sqrt{2},1/\sqrt{2}]^2\ |\ 1+2a_1^2-4b_1^2>0 \},\\
B&=\{(a_1,b_1)\in[-1/\sqrt{2},1/\sqrt{2}]^2\ |\ 1+2a_1^2-4b_1^2\le0 \}.\end{align*}
For $(a_1,b_1)\in A$, each term in $\mathrm{Sec}(\alpha)$ is increasing in $\phi$, so a lower bound can be found by taking $\phi\to0^+$ where
\[\lim_{\phi\to0^+}\mathrm{Sec}(\alpha)=-\frac{1}{6}-a_1^2.\]
This gives lower bound when $a_1^2=1/2$ so on $A\times\mathbb{R}^+$ we get the sharp lower bound $\mathrm{Sec}(\alpha)>-2/3$.

For $(a_1,b_1)\in B$,
\[ \mathrm{Sec}(\alpha)\ge f(a_1,b_1,\phi):=\frac{(a_1+b_1)^2}{2}\mathrm{Sec}(\kk,\tT)
+\frac{(a_1-b_1)^2}{2}\mathrm{Sec}(\xx,\yy).\]
Each term in $f$ is increasing in $\phi$, so a lower bound can be found by taking $\phi\to0^+$ where
\[\lim_{\phi\to0^+}f(a_1,b_1,\phi)=-\frac{2}{3}(a_1^2+b_1^2),\]
which has minimum $-2/3$ on $B$, when $a_1^2=b_1^2=1/2$.
Combining the results on $A$ and $B$, this shows that we have sharp bounds on the sectional curvature in dimension four,
\[ -\frac{2}{3} < \mathrm{Sec} < 0. \]
\end{proof}
Thus the conjecture in the introduction holds for $m=2$.

%The $a_1$ and $b_1$ derivatives are
%\begin{align*}
%\frac{\partial\mathrm{Sec}(\alpha)}{\partial a_1}&=2a_1 p+2b_1 q \\
%\frac{\partial\mathrm{Sec}(\alpha)}{\partial b_1}&=2a_1 q+2b_1 (p-3r)
%\end{align*}
%and
%\[ \left|\frac{\partial^2\mathrm{Sec}(\alpha)}{\partial^2 (a_1,b_1)}\right|=4(p^2-3pr-q^2)=4\frac{h(\phi)}{\phi^4e^\phi}.\]
%where $h(x)=16\cosh(x)-4x\sinh(x)-4x^2-16$ satisfies $h(0)=h'(0)=h''(0)=h'''(0)=0$ and $h^{(4)}(x)=-4x\sinh(x)\le0$ for $x>0$.
%Therefore $h(x)<0$ for $x>0$ and for $\phi>0$, $\mathrm{Sec}(\alpha)$ has a single interior critical point which is a saddle point.
%When $\phi=0$ the critical point degenerates to a line of critical points along $a_1=0$.

\section{Steady solitons}\lb{steady-sols}

We now develop the case of steady solitons ($\lambda=0$) in the $\ta$-dependent case.
Our setting is that of $(M,g,J)$ of Proposition~\ref{kah}, with $A_i,\ldots H_i, N_i, L$
and $f$ all functions of $\ta$. All equations up to and including \Ref{another} continue to hold.
For skew-solitons, equations \Ref{1}-\Ref{2} hold, in
which we set $\lambda=0$. Setting
\[
Q:=2L+{\textstyle\sum_{j=1}^n}(C_j-H_j+A_j-F_j),
\]
equation \Ref{1} yields two main cases to consider, which we will denote from now on (I)
and (II):
\be\lb{I-II1}
\begin{aligned}
\mathrm{(I)}&\ \ Q=f'(\ta),\\
\mathrm{(II)}&\ \ N_i=0,\quad i=1,\ldots n.
\end{aligned}
\end{equation}
These cases need not be mutually exclusive. Also,
we will not consider hypothetical mixed cases where (I) and all or parts of (II)
hold on different subsets.

For these two cases, equation \Ref{2} becomes
\be\lb{I-II2}
\begin{aligned}
\mathrm{(I)}&\ \ f''(\ta)-Lf'(\ta)=0,\\
\mathrm{(II)}&\ \ -L(Q+f'(\ta))+(Q+f'(\ta))'=0.
\end{aligned}
\end{equation}
To these we add the conditions for existence of a K\"ahler-Ricci soliton,
which we reproduce here:
\begin{align}
\label{sol1}f''(\tau)+Lf'(\tau)&=0,\\
\label{sol2}A_i+E_i&=0,\\
\label{sol3}D_i+H_i&=0,\\
\label{sol4}B_i+C_i+F_i+G_i&=0,\\
\lb{sol5}\text{$f(\ta)$ has only degenerate}&\text{ critical points,}
\end{align}
where the first four of these conditions only need to hold away from the
critical points of $f(\ta)$.

Note that case (I) in \Ref{I-II2}, together with \Ref{sol1} can be given
alternatively as
\be\lb{auxil}
f''(\ta)=0,\qquad Lf'(\ta)=0
\end{equation}
and the first of these implies
that away from the critical points of $f(\ta)$, and hence everywhere, $f$ is affine in $\ta$.
Thus if we are not in the trivial soliton case where $f$ is constant, $L$ must vanish.

Finally, one should remember that to this list of conditions and equations one must
add the K\"ahler conditions \Ref{brack1}-\Ref{rels2}.

\subsection{Cohomogeneity one steady solitons}

For cohomogeneity one solitons, consider first the K\"ahler conditions
\Ref{K3}-\Ref{another}. Away from the zeros of $f'(\ta)$.
\Ref{another} together with \Ref{L-N} and \Ref{sol4} implies as before \Ref{a-eq-b}, i.e.
\be\lb{a-eq-b1}
\text{$b_i=\ell_i a_i$ for a constant $\ell_i>0$ satisfying
$\mathlarger{\Gamma}_{iz}^i-\ell_i^2\mathlarger{\Gamma}_{zi}^i=0$, $i=1,\ldots n$.}
\end{equation}
Next we have in our two cases
\be\lb{constants}
\begin{aligned}
\mathrm{(I)}&\ \ \text{$c=c_0$ is constant},\\
\mathrm{(II)}&\ \ \text{$a_i$ are constant $i=1,\ldots n$},
\end{aligned}
\end{equation}
the first since $L=0$, while the second since $N_i=0$ implies
$\mathlarger{\Gamma}_{ii}^z=0$ in \Ref{K3}. Also, for case (II)
\Ref{f-pr} remains in effect:
\be\lb{f-pr1} \mathrm{(II)}\ \text{$f'=k\al c$ for a constant $k$,}\end{equation}
where the prime in this subsection denotes differentiation with respect to $t$.

Now the formula for $Q$ reads, due to \Ref{L-N}, \Ref{a-eq-b1} and \Ref{K3},
\[
Q=-\sqrt{2}\fr1\al\fr{c'}c+\fr1{\sqrt{2}c}\sum_{i=1}^n 2\ell_j^{-1}\mathlarger{\Gamma}_{jz}^j
-\fr1{\sqrt{2}}\sum_{j=1}^n\mathlarger{\Gamma}_{jj}^z\fr c{\ell_ja_j^2},
\]
where the first term on the right hand side is zero in case (I), while the last sum  is zero in case (II).
Therefore \Ref{I-II1}-(I) and \Ref{I-II2}-(II) become, in view of \Ref{auxil} and \Ref{constants},
and together with \Ref{K3} in case (I) and taking into account \Ref{f-pr1} in case (II), after minor simplification
\be\lb{both}
\begin{aligned}
\mathrm{(I)}&\ \ \fr1{c_0}\sum_{i=1}^n 2\ell_j^{-1}\mathlarger{\Gamma}_{jz}^j-c_0\sum_{j=1}^n\mathlarger{\Gamma}_{jj}^z\fr 1{\ell_ja_j^2}=K,\qquad
(a_i^2)'=\fr{c_0^2}{\ell_i}\mathlarger{\Gamma}_{ii}^z\prod_{k=1}^na_k^2,\\
\mathrm{(II)}&\ \ \fr{c'}c\Big(kc+\fr1{c}\sum_{i=1}^n 2\ell_j^{-1}\mathlarger{\Gamma}_{jz}^j-\fr{2}{\al}\fr{c'}c\Big)
+\Big(kc+\fr1{c}\sum_{i=1}^n 2\ell_j^{-1}\mathlarger{\Gamma}_{jz}^j-\fr{2}{\al}\fr{c'}c\Big)'=0,\\
\end{aligned}
\end{equation}
where $K$ is a constant and $i=1,\ldots n$.

In the first equation of \Ref{both} in case (I), the first sum is just a constant.
Moreover, recall that $\mathlarger{\Gamma}_{jz}^j=0$ for all $j\ne i$ if $\mathlarger{\Gamma}_{ii}^z\ne 0$. Thus that first sum contains at most one term.

In case (II), the equation in \Ref{both} simplifies to
\be\lb{II-simple}
\mathrm{(II)}\ \ k\beta c'+\fr{(c')^2}{c^3}-\fr{c''}{c^2}=0,
\end{equation}
where $\beta=\prod_{i=1}^na_i^2$ is constant by \Ref{constants}-(II).

\subsection{No solutions in case (I)}

Assume without loss of generality that $\mathlarger{\Gamma}_{ii}^z$ are nonzero for
$i=1,\ldots k$, where $1\le k\le n$, and any others are zero. Differentiating the first equation in case (I) of
\Ref{both} while substituting the last $n$ gives
\[
c_0^3\prod_{i=1}^na_i^2\sum_{j=1}^k(\mathlarger{\Gamma}_{jj}^z)^2\ell_j^{-2}a_j^{-4}=0
\]
which obviously cannot hold as $c_0$ and $a_i$, $i=1,\ldots n$ are nonzero for a metric. Thus case (I) yields no solutions,
unless it is a special case of case (II).

\subsection{Solutions in case (II)}

For given $k$, $\beta$, the general nonconstant positive solution to \Ref{II-simple}  is
\[
c = \Big(k_2 e^{k_1 t} + k\beta/k_1\Big)^{-1/2}
\]
where $k_1\ne 0$, $k_2$ are integration constants. The steady soliton metric has the form
\[
g=\fr{\beta^2\,dt^2+\zeta^2}{k_2 e^{k_1 t} + k\beta/k_1}
+\sum_{i=1}^n(1+\ell_i^2)a_i^2(\sig_i^2+\rho_i^2),
\]
where we recall that $a_i$, $i=1,\ldots n$ and $\beta$ are constants.
The length of a geodesic normal to the orbits is given by
\[
2\sqrt{\fr{\beta}{kk_1}}\tanh^{-1}\left.\left(\sqrt{\fr{k_1}{k\beta}}
\sqrt{k_2e^{k_1t}+\fr{k\beta}{k_1}}\right)\right|_{t_0}^t
\]
As the argument of the inverse hyperbolic tangent is nonnegative and monotone,
this function will approach a finite value at its infimum. Hence such a metric, when defined on a manifold with
no singular orbits, will be incomplete. By adopting the methods of \cite{vz} one can show the
same holds even with a singular orbit present.

\section{Complete cohomogeneity one K\"ahler-Ricci skew-solitons under $E(2)$}\lb{E2-skews}

In this section we show the existence of K\"ahler-Ricci skew-solitons in dimension four, on
a space that does not admit gradient K\"ahler-Ricci solitons of the kind we are considering in
this paper, namely the $\ta$-dependent case defined in subsection~\ref{ta-dep}.
%with soliton potential $f$ and all functions of the framework of section~\ref{general}
%depending on $\ta$.

For our space we take $(M,g)$ to be a cohomogeneity one $4$-manifold under the action
of the Euclidean group $E(2)$. For $g$ of the form \Ref{bianchiAmet} we have $n=1$, and denote $a:=a_{\1}$ and $b:=b_{\1}$.  We represent the Lie algebra of $E(2)$ via a frame with $\mathlarger{\Gamma}_{\1 \1}^z=\mathlarger{\Gamma}_{\1 z}^{\1}=1$,
and $\mathlarger{\Gamma}_{z\1}^{\1}=0$, so the condition $\mathlarger{\Gamma}_{\1 z}^{\1}-\ell_{\1}^2\mathlarger{\Gamma}_{z\1}^{\1}=0$ from \Ref{a-eq-b} is violated,
showing that indeed a $\ta$-dependent gradient K\"ahler-Ricci soliton does not exist in this case.
Finally, as we will be considering expanding skew-solitons, we take $\lambda=-1$.

To state our theorem, we make the following definition.\\
Let $\ep=\ep(b):[0,\infty)\to\mathbb{R}$ be a $C^\infty$ function
satisfying
%\vspace{-.01in}
\be\lb{ep}
\begin{aligned}
i)\ &\ep(0)=0,\qquad 0\le\ep(b),\\
ii)\ &\ep'(0)=0,\qquad
%\ep'(b)\ge2\ep(b)/b,\nonumber\\
\ep'(b)>-2b,\\
iii)\ &\text{$\ep(b)$ has a smooth extension  to an even function on $\mathbb{R}$.}
\end{aligned}
\end{equation}

\begin{thm}\lb{complete}
Let $M$ be a $4$-manifold admitting a cohomogeneity one action of $E(2)$ with a single singular orbit and an $E(2)$-invariant metric of the form \Ref{bianchiAmet} with $n=1$.
Then for any function $\ep(b)$ as in \Ref{ep}, there exists on $M$ a complete expanding gradient
K\"ahler-Ricci skew-soliton which is cohomogeneity one under this action,
of the form $(g,f)$,
where
\be\lb{bianchiAmet1}
\begin{aligned}
g &= (abc)^2dt^2+a^2\sigma_1^2+b^2\sigma_2^2+c^2\sigma_3^2,\\
f'&=\fr{df}{dt}=2\ep(b)a^2,
\end{aligned}
\end{equation}
for certain functions $a$, $b$, $c$ of $t$.
\end{thm}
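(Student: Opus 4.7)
My plan is to adapt the ODE method of \cite{mr}, replacing the K\"ahler-Einstein condition with the skew-soliton equation~\Ref{skew-sol}. The first step is to specialize the equations underlying Proposition~\ref{coho-one-sols} to the $E(2)$ case: with $n=1$, structure constants $\mathlarger{\Gamma}_{\1\1}^z = \mathlarger{\Gamma}_{\1 z}^{\1} = 1$, $\mathlarger{\Gamma}_{z\1}^{\1}=0$, and $\lambda=-1$, the K\"ahler conditions~\Ref{K3}-\Ref{another} together with the skew-soliton equation~\Ref{1''} give a coupled ODE system in $a(t)$, $b(t)$, $c(t)$ and $f(t)$. Substituting the ansatz $f'(t) = 2\epsilon(b)\,a^2$, the equation involving $f$ should decouple, leaving a system in $a$, $b$, $c$ alone, with $\epsilon(b)$ entering as a free input in place of the rigid relation that fails due to $\mathlarger{\Gamma}_{z\1}^{\1}=0$.

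The second step is to change the independent variable from $t$ to $b$, exploiting monotonicity of $b$ on the relevant interval. I expect the system to collapse to a single first-order ODE (for $c^2$, say) in $b \in (0,\infty)$, with $a^2$ then recovered by quadrature. Boundary conditions at the singular orbit $b=0$ are dictated by smoothness of the metric where the orbit collapses; these should correspond precisely to $\epsilon(0)=0$, $\epsilon'(0)=0$ and the even smooth extension in \Ref{ep}, yielding a smooth local solution near $b=0$ together with smooth extension of $g$ across the singular orbit. Global positivity of $a$ and $c$ on $(0,\infty)$ should follow from the bound $\epsilon'(b)>-2b$, which is exactly what is needed to keep the nonlinear ODE from degenerating or blowing up in finite ``time''.

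Completeness is then established along the lines of Proposition~\ref{escape1}. The dual left-invariant coframe of $E(2)$ contains closed $1$-forms whose primitives yield ``trivial'' coordinates; Cauchy--Schwartz then bounds their variation along any finite-length curve, as in the Heisenberg argument. The remaining nontrivial coordinate on $E(2)$ and the cohomogeneity-one parameter $t$ (equivalently $b$) are controlled by showing that the radial length integral $\int \sqrt{(\text{appropriate weight involving }a,c)}\,db$ diverges at both endpoints of $(0,\infty)$. The main obstacle, as in \cite{mr}, is the interplay between long-time existence of the reduced ODE, positivity of $a$ and $c$, and divergence of these length integrals; all three must be secured simultaneously using the precise sign/growth conditions of \Ref{ep}, with the inequality $\epsilon'(b)>-2b$ doing the decisive work in preventing the metric from degenerating along the escape to infinity. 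Smoothness across the singular orbit, by contrast, should follow routinely from the evenness condition \Ref{ep}(iii) via the standard reparametrization by $b^2$.
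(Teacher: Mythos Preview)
Your overall strategy matches the paper's, but there is a concrete misconception and one methodological divergence worth noting.

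The concrete error is in your completeness step. You write that the radial length integral must diverge at \emph{both} endpoints of $(0,\infty)$. It does not: the paper shows (Proposition~\ref{prop:estimates}) that $\lim_{t\to-\infty}d_g(p_0,M^t)<\infty$, i.e.\ the distance to the $b\to 0^+$ end is \emph{finite}. This is precisely why the theorem hypothesizes a \emph{single singular orbit}: the metric is incomplete on the regular part, and one must attach a bolt $E(2)/SO(2)$ at that finite-distance end (Proposition~\ref{prop:bolt}). Smooth extension across the bolt is not ``routine'': the Verdiani--Ziller conditions must be checked separately for $g$ and for $\omega$, and the needed identity $db/dr|_{r=0}=1$ comes from the equilibrium value $a/c=1$, not merely from the evenness of $\ep$. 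Divergence of the radial integral is required only at the $b\to\infty$ end, and that is where the estimates involving $\ep'(b)>-2b$ do their work.

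On method: your plan to change variables to $b$ and reduce to a single first-order ODE is viable---indeed the paper exploits the closed equation \Ref{a/c-b} for $a/c$ in terms of $b$ in its distance estimates---but the paper does \emph{not} use this for existence. Instead it keeps $t$ as independent variable, identifies the equilibria $(q,0,q)$, invokes the Center Manifold Theorem to obtain the unstable curve on $(-\infty,\eta)$, and then rules out all other initial conditions via a trichotomy on the sign of $c^2-a^2$ relative to $0$ and $2a^2(b^2+\ep(b))$ (Proposition~\ref{prop:solutions}). Your approach would need a separate argument for existence of a solution to the singular ODE at $b=0$ with $a/c\to 1$, together with global existence on $(0,\infty)$; the paper's route avoids the singular initial value problem at the cost of the case analysis. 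Either can be made to work, but you should not conflate the two or assume the boundary behaviour at $b=0$ comes for free.
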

The proof follows very closely the proof of existence of K\"ahler-Einstein metrics
on such manifolds in \cite{mr}, but with more difficult estimates. The latter proof,
in turn, took its inspiration from \cite{d-s1}.
It will be given over a number of separate steps.

\subsection{The system}

We consider the system
\begin{align}
2a'/a&=-a^2+c^2,\lb{E2ODEa}\\
2b'/b&=a^2+c^2, \lb{E2ODEb}\\
2c'/c&=a^2-c^2+2(ab)^2+2\ep(b)a^2,\lb{E2ODEc}\\
f'&=2\ep(b)a^2, \lb{E2ODEd}
\end{align}
for $\ep$ as in \Ref{ep}. The first two equations are easily derived from the K\"ahler conditions
\Ref{K3}-\Ref{another}, while the third is just the specialization of \Ref{1''} to this case,
once we make the choice of $f'$ as in \Ref{E2ODEd}. Thus existence of a solution to this system
for positive $a$, $b$, $c$ will give the skew-solitons, and the main task is to find complete
solution metrics.

The system \Ref{E2ODEa}-\Ref{E2ODEc} decouples from the final equation \Ref{E2ODEd}.
Using \Ref{ep}i), ii), one of its equilibrium points is $(q,0,q)$, $q>0$,
and its linearization is the same as when $\ep\equiv 0$, with one negative, one zero and one positive
eigenvalue. By the Center Manifold Theorem, its unstable curve is smooth and has domain
$(-\infty,\eta)$. We will give a condition on $(a,b,c)$, all positive, which yields such a curve.
But first we eliminate other candidate conditions. To this end, we first record a few useful relations.
\begin{lemma}\lb{prelim}
For the system \Ref{E2ODEa}-\Ref{E2ODEc},
\begin{align*}
(ab)'&=abc^2, \\
(bc)'&=bca^2\left(1+b^2+\ep(b)\right), \\
(ac)'&=a^3c(b^2+\ep(b)), \\
\left(\frac{a}{b}\right)'&=-\frac{a^3}{b}, \\
%(a^2)'&=(a^2)(-a^2+c^2) \\
(c^2-a^2)'&=-(c^2-a^2)(a^2+c^2)+2 a^2c^2(b^2+\ep(b)) \\
%-(c^2)'&=-c^2(a^2-c^2)-2 a^2b^2c^2
\end{align*}
%\begin{align*}
%(ab)'&=p_3abc^2, \\
%(bc)'&=bca^2\left(p_1-\frac{\lambda}{p_3}b^2\right), \\
%(ac)'&=acb^2\left(p_2-\frac{\lambda}{p_3}a^2\right), \\
%\left(\frac{a}{b}\right)'&=-\frac{a}{b}(p_1a^2-p_2b^2), \\
%(p_1a^2-p_2b^2)'&=(p_1a^2-p_2b^2)(-p_1a^2-p_2b^2+p_3c^2) \\
%(p_1a^2-p_3c^2)'&=(p_1a^2-p_3c^2)(-p_1a^2+p_2b^2-p_3c^2)+2\lambda a^2b^2c^2 \\
%(p_2b^2-p_3c^2)'&=(p_2b^2-p_3c^2)(p_1a^2-p_2b^2-p_3c^2)+2\lambda a^2b^2c^2
%\end{align*}
\end{lemma}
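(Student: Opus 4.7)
The plan is to verify each identity by applying the product or quotient rule and substituting the logarithmic derivatives $a'/a$, $b'/b$, $c'/c$ read off directly from \Ref{E2ODEa}--\Ref{E2ODEc}. For any pair of nowhere-vanishing factors $u$, $v$ one has
\[
(uv)' = uv\bigl(u'/u + v'/v\bigr),\qquad (u/v)' = (u/v)\bigl(u'/u - v'/v\bigr),
\]
so each of the first four formulas reduces to forming the appropriate half-sum or half-difference of the right-hand sides of \Ref{E2ODEa}--\Ref{E2ODEc}. Concretely, I expect $(ab)'/(ab) = \tfrac12[(-a^2+c^2)+(a^2+c^2)] = c^2$ and $(a/b)'/(a/b) = \tfrac12[(-a^2+c^2)-(a^2+c^2)] = -a^2$, with the $(ab)^2$ and $\ep(b)a^2$ contributions in \Ref{E2ODEc} combining with $\pm a^2\pm c^2$ to produce the factors $a^2(1+b^2+\ep(b))$ and $a^2(b^2+\ep(b))$ in the $(bc)$ and $(ac)$ identities, respectively.

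For the last identity I will differentiate $c^2-a^2$ using $(c^2)'=2cc'$ and $(a^2)'=2aa'$, which together with \Ref{E2ODEa} and \Ref{E2ODEc} gives
\[
(c^2-a^2)' = c^2\bigl[a^2-c^2+2(ab)^2+2\ep(b)a^2\bigr] - a^2\bigl[-a^2+c^2\bigr].
\]
Regrouping the $a^4$, $c^4$, and $a^2c^2$ monomials as $-(c^2-a^2)(a^2+c^2)$ leaves the residual $2a^2c^2(b^2+\ep(b))$, matching the claim.

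There is no substantive obstacle here; the lemma is a bookkeeping step, and the only care needed is to track the $\ep(b)$ contributions correctly through \Ref{E2ODEc}. Its purpose is to package the combinations that will later control the dynamics of the unstable curve emanating from the equilibrium $(q,0,q)$ and the completeness estimates, so it is recorded once and for all before those arguments begin.
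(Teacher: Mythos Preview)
Your proposal is correct; the paper records Lemma~\ref{prelim} without proof, and the direct logarithmic-derivative verification you outline is exactly the intended (and only reasonable) way to check these identities.
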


\subsection{Solutions}
\begin{prop}\label{prop:solutions}
There are no complete metrics corresponding to solutions of \Ref{E2ODEa}-\Ref{E2ODEc} with maximal interval $(\xi,\eta)$, when $\xi$ is finite. Furthermore, the only solutions with maximal interval $(-\infty,\eta)$ are the unstable curves of the equilibrium points $(q,0,q)$, $q>0$, and these solutions satisfy $0\le c^2-a^2\le 2a^2(b^2+\ep(b))$.
\end{prop}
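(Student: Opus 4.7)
The plan is to first rule out finite $\xi$, then to analyze the backward asymptotics when $\xi=-\infty$. For the first part, summing logarithmic derivatives in \Ref{E2ODEa}--\Ref{E2ODEc} yields
\[
(abc)'/(abc) = \tfrac{1}{2}\bigl[a^2+c^2+2a^2(b^2+\ep(b))\bigr] > 0,
\]
so $abc$ is strictly increasing in $t$. Hence if $\xi>-\infty$, a normal geodesic from $t_0$ back to the $\xi$-end has length $\int_\xi^{t_0} abc\,dt \le abc(t_0)(t_0-\xi)<\infty$, contradicting completeness.

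Now assume $\xi=-\infty$, and write $A=a^2$, $C=c^2$. I first show $C\ge A$ throughout. Using $A' = A(C-A)$ together with a short computation from Lemma~\ref{prelim}, one gets
\[
(C/A)' = (C/A)\bigl[-2(A-C) - 2A(b^2+\ep(b))\bigr].
\]
If $A(t_*)>C(t_*)$ for some $t_*$, then as $t$ decreases from $t_*$, $C/A$ stays strictly below its value at $t_*$ (which is $<1$), so $1-C/A$ is bounded below by a positive constant $\alpha_0$. Then $-A' = A(A-C) \ge \alpha_0 A^2$, forcing finite backward-time blow-up of $A$ and contradicting $\xi=-\infty$. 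Hence $c^2\ge a^2$ on the whole orbit.

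Next, $(a^2)' = a^2(c^2-a^2)\ge 0$ makes $a^2$ monotone in $t$, and the increasing quantities $ab, bc, ac$ from Lemma~\ref{prelim} also have limits as $t\to -\infty$. A case analysis---excluding $a^2\to 0$ by a parallel backward blow-up argument (the derivative of $c^{-2}$ tends to $1$ if $a,b\to 0$ backwards), and using $(\ln b)' = (a^2+c^2)/2$ to force $b\to 0$---shows $(a,b,c)(t)\to (q,0,q)$ for some $q>0$. The Jacobian at $(q,0,q)$ has eigenvalues $-2q^2, 0, q^2$ with unstable eigenvector $\partial_b$ and center eigenvector $(1,0,1)$ along the curve of equilibria; the Unstable Manifold Theorem then gives a unique one-dimensional smooth unstable curve, on which our orbit must lie.

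For the upper inequality, set $w=c^2/a^2$ and $h=1+b^2+\ep(b)$. A computation from Lemma~\ref{prelim} yields
\[
(w-h)' = -2a^2 w(w-h) - \tfrac{1}{2}(a^2+c^2)b\bigl(2b+\ep'(b)\bigr).
\]
By \Ref{ep}ii), $2b+\ep'(b)>0$ for $b>0$, so at any $t$ where $w=h$ and $b>0$, $(w-h)'<0$; combined with $w-h\to 0$ as $t\to-\infty$ (by convergence to the equilibrium), a barrier argument gives $w\le h$. The main technical obstacle is the backward convergence analysis in the third paragraph: once $c^2\ge a^2$ is known, one still has to combine the various monotonicities to exclude accumulation on the boundary of the positive cone.
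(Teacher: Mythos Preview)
Your opening observation --- that $(abc)'/(abc)=\tfrac12\bigl(a^2+c^2\bigr)+a^2(b^2+\ep(b))>0$, so a finite $\xi$ forces $\int_\xi^{t_0} abc\,dt\le abc(t_0)(t_0-\xi)<\infty$ and hence incompleteness --- is correct and considerably slicker than what the paper does for that part.  The paper instead runs a trichotomy (Case~1: $c^2<a^2$ at some time; Case~2: $c^2-a^2>2a^2(b^2+\ep(b))$ at some time; Case~3: both bounds hold for all time) and in Cases~1,~2 shows by a concavity argument that one of $a,c$ blows up backward, $\xi$ is finite, and the integral is finite.

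But that trichotomy is not wasted effort: by contraposition it is precisely how the paper obtains \emph{both} inequalities $0\le c^2-a^2\le 2a^2(b^2+\ep(b))$ when $\xi=-\infty$.  With both bounds in hand, all three of $a,b,c$ are increasing, each has a finite backward limit, and the solution lands on an equilibrium; $(0,b_\infty,0)$ is excluded since $a/b$ is decreasing and positive, and $(0,0,0)$ via a Gr\"onwall estimate on $d(ac)/db$.

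You reverse the logical order: you establish only the lower bound (your blow-up argument for $A=a^2$ is fine, though the displayed formula has a sign slip --- one actually gets $(C/A)'=(C/A)\bigl[2(A-C)+2A(b^2+\ep(b))\bigr]$, and it is this corrected sign that makes $C/A$ decrease going backward when $A>C$), then try to prove convergence to $(q,0,q)$ without the upper bound, and only afterwards deduce the upper bound from convergence.  The genuine gap is in the middle step, which you flag yourself.  Without the upper bound $c$ is not known to be monotone, and your hint ``$(c^{-2})'\to 1$ if $a,b\to 0$'' needs $a/c\to 0$, not merely $a,b\to 0$; if $a/c\to 1$ with $a,c\to 0$ you get $(c^{-2})'\to 0$ and the argument stalls.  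The clean fix is to do what the paper does: run a second backward blow-up for $c$ when $w=c^2/a^2>h=1+b^2+\ep(b)$ at some time (your own barrier computation $(w-h)'<0$ at $w=h$ already shows this inequality persists backward; one then checks $c'<0$ and $c''>0$ there, forcing $c\to\infty$ and $\xi$ finite).  With both bounds secured \emph{before} the convergence step, your argument goes through without the awkward case analysis.
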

\begin{proof}
For an initial time $t_0$, we let $(\xi,\eta)$ be a maximal solution interval for the initial value problem for \Ref{E2ODEa}-\Ref{E2ODEc} with $a(t_0)=a_0$, $b(t_0)=b_0$, and $c(t_0)=c_0$.

Uniqueness of solutions to \Ref{E2ODEa}-\Ref{E2ODEc} implies that if any of
$a$, $b$, or $c$ are zero anywhere in $(\xi,\eta)$ then it is zero everywhere.
Accordingly we thus assume that $a$, $b$, and $c$ are all positive on $(\xi,\eta)$.
Then we see from Lemma~\ref{prelim} and \Ref{E2ODEb} that $ab$, $bc$, $ac$, and $b$ are all
increasing on $(\xi,\eta)$.

We consider the following cases:
\subsubsection*{Case 1: $c_0^2-a_0^2<0$} We first claim the following.\\[4pt]
Claim: In this case $a\to\infty$ as $t\to\xi^+$.\\[3pt]
\textit{Proof of claim:} Since
\[ (c^2-a^2)'=-(c^2-a^2)(c^2+a^2)+2a^2c^2(b^2+\ep(b)), \]
if $c^2-a^2<0$ then $(c^2-a^2)'>0$, thus $c^2-a^2<0$ for all $\xi<t<t_0$.
Therefore,
\begin{align*}
a' &= \frac{a}{2}(c^2-a^2), \\
a'' &= \frac{a}{4}[(c^2-a^2)^2-2(c^2-a^2)(c^2+a^2)+4a^2c^2(b^2+\ep(b))],
\end{align*}
showing that $a$ is decreasing and concave up on $(\xi,t_0)$.
Next, we always have $b'>0$, while
\[c'=\frac{c}{2}(a^2-c^2+2a^2(b^2+\ep(b)))>0, \]
shows that $c$ is increasing on $(\xi,t_0)$.
Therefore $b$ and $c$ are bounded on $(\xi,t_0)$.
Thus, as $(\xi,\eta)$ is the maximal solution interval,
$a$ could be bounded as $t\to\xi^+$ only if $\xi=-\infty$.
But since $a$ is concave up, $a\to\infty$ as $t\to\xi^+$ even when $\xi=-\infty$. \qed

Since $ab$ and $ac$ are increasing, they are bounded as $t\to\xi^+$ and $a\to\infty$, so $b\to0$, $c\to0$, and $ab\to k$ for some constant $k$.
%Also, \Ref{ep}ii) implies similarly that $a^2\ep(b)$ is weakly increasing, so this expression has limit $p$, $p$ constant, as $t\to\xi^+$.
Near $b=0$ we can expand $\ep(b)$ so that for some constants $l$ and $m_0$,
\[ |\ep(b) - lb^2| \le m_0b^4. \]
Multiplying by $a^2$ this becomes
\[ |a^2\ep(b) - la^2b^2| \le m_0a^2b^4. \]
This show that as $t\to\xi^+$, $a^2\ep(b)\to lk^2$.
Then as $t\to\xi^+$ the equations will take the asymptotic form
\begin{align*}
a'&=-\frac{1}{2}a^3 \\
b'&=\frac{1}{2}ba^2 \\
c'&=\frac{1}{2}c(a^2+2k^2(l+1)) \\
\end{align*}
the solution of which has asymptotic form
\begin{align*}
a&\simeq (t-\xi)^{-\frac{1}{2}},\\
b&\simeq b_0(t-\xi)^{\frac{1}{2}},\\
c&\simeq c_0(t-\xi)^{\frac{1}{2}},\\
\end{align*}
for some constants $b_0$ and $c_0$.
This shows that $\xi$ is finite in this case and
\[ \int_{\xi}^{t_0} abc\,dt <\infty,\]
hence the metric is not complete.

\subsubsection*{Case 2: $c_0^2-a_0^2>2a_0^2(b_0^2+\ep(b_0))$} We have, again, a similar claim.\\[4pt]
Claim: In this case $c\to\infty$ as $t\to\xi^+$.\\[3pt]
\textit{Proof of claim:} Since
\begin{align*} (c^2-a^2-2a^2(b^2+\ep(b)))'&=-(c^2-a^2)(c^2+a^2)-2a^2b^2c^2\\&+2a^4(\ep(b)-b\ep'(b)/2)-a^2c^2b\ep'(b), \\
&=-(c^2-a^2-2a^2(b^2+\ep(b)))(c^2+a^2)-4a^2b^2c^2\\ &-2a^4b^2-2a^2c^2\ep(b)-a^4b\ep'(b)-a^2c^2b\ep'(b),\\
&=-(c^2-a^2-2a^2(b^2+\ep(b)))(c^2+a^2)-2a^2b^2c^2\\ &-2a^2c^2\ep(b)-a^2b(\ep'(b)+2b)(a^2+c^2),
\end{align*}

if $c^2-a^2-2a^2(b^2+\ep(b))>0$ then $c^2-a^2-2a^2b^2>0$ so that via \Ref{ep}ii) $(c^2-a^2-2a^2(b^2+\ep(b)))'<0$, thus in this case, $c^2-a^2>2a^2(b^2+\ep(b))$ for all $\xi<t\le t_0$.
Therefore,
\begin{align*}
c' &= \frac{c}{2}(a^2-c^2+2a^2(b^2+\ep(b)), \\
c'' &= \frac{c}{4}[(a^2-c^2+2a^2(b^2+\ep(b)))^2+2(c^2-a^2-2a^2(b^2+\ep(b)))(c^2+a^2)\\&+4a^2b^2c^2+4a^2c^2\ep(b)+2a^2b(\ep'(b)+2b)(a^2+c^2)],
\end{align*}
showing via \Ref{ep}ii) that $c$ is decreasing and concave up on $(\xi,t_0)$.
Next, we always have $b'>0$, while
$a$ is increasing on $(\xi,t_0)$.
Therefore $a$ and $b$ are bounded on $(\xi,t_0)$.
Thus, as $(\xi,\eta)$ is the maximal solution interval,
this time $c$ could be bounded as $t\to\xi^+$ only if $\xi=-\infty$.
But since $c$ is concave up, $c\to\infty$ as $t\to\xi^+$
even when $\xi=-\infty$. \qed

Since $ac$ and $bc$ are increasing, they are bounded as $t\to\xi^+$ and $c\to\infty$, so $a\to0$, $b\to0$.
Then as $t\to\xi^+$ the equations will take the asymptotic form
\begin{align*}
a'&=\frac{1}{2}ac^2 \\
b'&=\frac{1}{2}bc^2 \\
c'&=-\frac{1}{2}c^3 \\
\end{align*}
which have the solution
\begin{align*}
a&\simeq a_0(t-\xi)^{\frac{1}{2}}\\
b&\simeq b_0(t-\xi)^{\frac{1}{2}}\\
c&\simeq (t-\xi)^{-\frac{1}{2}}\\
\end{align*}
for some constants $a_0$ and $b_0$.
This shows that $\xi$ is finite in this case and
\[ \int_{\xi}^{t_0} abc\,dt <\infty,\]
so the metric is not complete.

If $c^2-a^2<0$ or $c^2-a^2>2a^2b^2$ at any time, then a constant shift in $t$ will give one of the previous cases.
In both previous cases, $\xi$ is finite, but we know that the unstable curve of the equilibrium points $(q,0,q)$ must have $\xi=-\infty$.
The existence of such curves is guaranteed by the Center Manifold theorem.
Therefore we consider the final case:

\subsubsection*{Case 3: $0\le c^2-a^2 \le 2a^2(b^2+\ep(b))\textnormal{ for all }t\in(\xi,\eta)$} Here
$(\xi,\eta)$ is a maximal domain for the solution\\[4pt]
Claim: In this case $\xi=-\infty$. \\[3pt]
\textit{Proof of claim:}
If $t_0$ is in the maximum interval,
as $a$, $b$, and $c$ are all increasing, they are therefore all bounded on $(\xi,t_0)$.
Since $(\xi,\eta)$ is the maximal solution interval, we see that $\xi=-\infty$. \qed

As $a$, $b$, and $c$ are all increasing, it must be that they all approach finite non-negative limits as $t\to-\infty$.
Thus $(a,b,c)$ must approach an equilibrium point.
If $(a,b,c)\to(0,q,0)$ with $q>0$, then $a/b\to 0$ as $t\to-\infty$, but $a/b$ is decreasing and positive,
%(see Lemma~\ref{prelim}),
hence this cannot happen.

Therefore, when $t\to-\infty$ we see that $(a,b,c)\to(q,0,q)$ in this case, but we still need to rule out
the possibility that $q=0$. For this we compute the variation of $ac$ with respect to $b$:
%notice that whenever $(a(t),b(t),c(t))$ is a solution of (\ref{E2ODEa})-(\ref{E2ODEc}), so is the scaled %curve $(ka(k^2t),b(k^2t),kc(k^2t))$.
%This suggest finding the dependence of $(a/c)$ and $b$.
%Using $b$ as an independent variable we obtain the ODE
\begin{align}
\frac{d(ac)}{db}&=\frac{2(\frac{a}{c})^2(ac)(b+\fr{\ep(b)}{b})}{(\frac{a}{c})^2+1}.\lb{ac-b}
\end{align}
Our assumption of an equilibrium point $(q,0,q)$ implies that $a/c\to 1$ when $b\to 0$.
%From the first of these equations we see that the slope field is negative along $\frac{a}{c}=1$, so
Since in our case $c^2-a^2\ge 0$, we have
\[ \frac{a}{c}\le 1. \]
Employing this inequality in equation \Ref{ac-b} yields
\[
\frac{d(ac)}{db}\le (ac)\Big(b+\fr{\ep(b)}b\Big).
\]
By Gr\"onwall's inequality, if $ac\to 0$ when $b\to 0$ then $ac=0$ identically. As the latter
is not possible,
%(see the beginning of the proof),
neither is $q=0$.

\end{proof}

\subsection{Distance computations}

In the following proposition in particular, the estimates diverge significantly from those
in~\cite{mr}.
\begin{prop}\label{prop:estimates}
Let $g$ be a Riemannian metric of the form \Ref{bianchiAmet1} on a manifold $M$, with $a$, $b$, $c$ a solution to \Ref{E2ODEa}-\Ref{E2ODEc} along an unstable curve of an equilibrium point $(q,0,q)$, $q>0$,
having maximal domain $I=(-\infty,\eta)$. Assume that the latter interval is also the range of the coordinate function $t$ on $M$.
Then for any point $p_0\in M$ with orbit through $p_0$ of principal type and $M^t$ a level set of $t$,
\[
\lim_{t\to -\infty}d_g(p_0,M^t)<\infty,\qquad
\lim_{t\to \eta}d_g(p_0,M^t)=\infty,
\]
where $d_g$ is the distance function induced by $g$.
%Suppose $\gamma: I\to M$ is a smooth injective map defining a
%curve in $M$ which passes through a point $p_0$, is $g$-orthogonal to the $G$-orbits and satisfies
%$t\circ\gamma=\mathrm{id}_I$.
%Then
%\[
%\lim_{t\to -\infty}d_g(p_0,\gamma(t))<\infty,\qquad
%\lim_{t\to \eta}d_g(p_0,\gamma(t))=\infty,
%\]
%where $d_g$ is the distance function induced by $g$.
\end{prop}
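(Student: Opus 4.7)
The plan is to first establish the identity
\[
d_g(p_0,M^t)=\Big|\,\int_{t(p_0)}^{t}\!abc\,du\,\Big|.
\]
For the lower bound, the diagonal form \Ref{bianchiAmet1} gives $|\gamma'(s)|^2\ge (abc)^2\,(t'(s))^2$ for any smooth curve $\gamma$ from $p_0$ to $M^t$, whence $L(\gamma)\ge\big|\int_{t(p_0)}^{t}abc\,du\big|$; for the upper bound, the integral curve of $\partial_t$ through $p_0$ reaches $M^t$ with length exactly that integral. This reduces the two claims to the integral statements $\int_{-\infty}^{t_0}abc\,dt<\infty$ and $\int_{t_0}^{\eta}abc\,dt=\infty$.

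The first is handled by linearizing \Ref{E2ODEa}-\Ref{E2ODEc} at the equilibrium $(q,0,q)$: since $\epsilon(b)=O(b^2)$ near $b=0$ by \Ref{ep}, the quadratic term $2a^2(b^2+\epsilon(b))$ in the $c$-equation drops out of the linearization, yielding eigenvalues $\{0,-2q^2,q^2\}$ with one-dimensional unstable direction $\tilde b$. Hence on the unstable curve $b(t)\sim C e^{q^2 t}$ while $a(t),c(t)\to q$ at rate $O(e^{2q^2 t})$ as $t\to-\infty$, so $abc\sim q^2 C e^{q^2 t}$ is integrable at $-\infty$.

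The divergence at $t\to\eta$ is the main obstacle. I would change the independent variable from $t$ to $b$, strictly increasing by \Ref{E2ODEb}, and use $b'/b=(a^2+c^2)/2$ to rewrite
\[
\int_{t_0}^{\eta}abc\,dt=\int_{b_0}^{b(\eta^-)}\!\frac{2ac}{a^2+c^2}\,db,
\]
after which I would verify (i) $b(\eta^-)=\infty$ and (ii) the integrand on the right has divergent integral. For (i), suppose $b(\eta^-)=b_\eta<\infty$. Then $K:=b_\eta^2+\epsilon(b_\eta)<\infty$, and the bound $c^2-a^2\le 2a^2(b^2+\epsilon(b))$ from Proposition \ref{prop:solutions} gives $c/a\le\sqrt{1+2K}$; by maximality of $(-\infty,\eta)$, it follows that $a\to\infty$ at $\eta$. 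Setting $u=a^2$ and $v=c^2-a^2$, the relation $u'=uv$ together with $v\le 2Ku$ forces a blow-up of type $u\sim 1/(\eta-t)$, so $\int_{t_0}^{\eta}u\,dt=\infty$; but $b'/b\ge u/2$ then forces $\log b\to\infty$, contradicting boundedness of $b$.

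For (ii) I would exploit the ODE
\[
w'=w\,a^2\big[(1+b^2+\epsilon(b))-w^2\big],\qquad w:=c/a,
\]
a fast equation when $a^2$ is large whose stable equilibrium $w_{\mathrm{eq}}=\sqrt{1+b^2+\epsilon(b)}$ is tracked closely by $w$. Since $2ac/(a^2+c^2)=2w/(1+w^2)$, this tracking gives a concrete lower bound on the integrand sufficient for divergence of $\int_{b_0}^{\infty}(2w/(1+w^2))\,db$ by direct comparison. The hardest technical step is making this tracking estimate uniform as $w_{\mathrm{eq}}$ itself drifts upward with $b$; this is the analogue of the sharper estimates referenced by the paper and by its predecessor \cite{mr}, and will constitute the bulk of the remaining work.
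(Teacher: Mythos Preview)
Your overall architecture matches the paper's proof: reduce to $d_g(p_0,M^t)=\big|\int_{t_0}^{t}abc\,du\big|$, handle $t\to-\infty$ by the linearized asymptotics $a,c\to q$, $b\sim Ce^{q^2t}$, change variable to $b$ via $b'=\tfrac{b}{2}(a^2+c^2)$ to get $\int\tfrac{2ac}{a^2+c^2}\,db$, then show $b\to\infty$ at $\eta$ and that this $b$-integral diverges.

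Where you diverge from the paper is in the two remaining steps, and in both cases the paper's route is cleaner than what you sketch. For $b\to\infty$: your contradiction argument is essentially correct but needs a case split on whether $\eta$ is finite (if $\eta=\infty$, maximality does not force $a\to\infty$; instead use $b'/b\ge a^2\ge q^2$ directly). The paper avoids this by first proving two comparison bounds on $a/c$ as a function of $b$---a nullcline lower bound $a/c\ge(1+b^2+\epsilon(b))^{-1/2}$ and a barrier upper bound $a/c\le(k_p^2/(k_p^2+b^2))^{1/2}$ for $b\ge b_p$---then using these in the equation for $d(\ln ac)/db$ to get $ac\ge C\sqrt{2+b^2}$, whence $b'\ge K_1 b^3$. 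This gives $\eta$ finite and $b\to\infty$ in one stroke.

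For the divergence of $\int_{b_0}^\infty\tfrac{2}{a/c+c/a}\,db$: the ``tracking'' problem you flag as the hardest step is in fact a one-sided estimate, and the paper dispatches it by the same nullcline argument. Your ODE $w'=wa^2[(1+b^2+\epsilon)-w^2]$ has nullcline $w_{\mathrm{eq}}$; since $w$ starts at $1=w_{\mathrm{eq}}(0)$ and the nullcline is decreasing in $b$ (by \Ref{ep}ii)), the solution stays at or below $w_{\mathrm{eq}}$. Equivalently $a/c\ge(1+b^2+\epsilon)^{-1/2}$, and since $x\mapsto 2x/(1+x^2)$ is increasing on $(0,1]$ this bounds the integrand below by a multiple of $1/b$. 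No two-sided tracking or uniformity argument is needed; once you see it as a nullcline comparison rather than a fast--slow tracking estimate, the ``bulk of the remaining work'' collapses to a few lines.
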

\begin{proof}
The union of the principal orbits forms an open dense set, $\tilde{M}$, so that
$\tilde{M}/\mathcal{G}$ is a smooth manifold of dimension 1. The function $t$ is
a smooth submersion from $\tilde{M}$ to $\tilde{M}/\mathcal{G}$. The metric
\[ (abc)^2dt^2\]
makes this into a Riemannian submersion.
The level sets of $t$ are orbits of $\mathcal{G}$ and for $t_0=t(p_0)$
\[d_g(p_0,M^{t_1}) = d_g(M^{t_0},M^{t_1}), \]
is the distance in the quotient manifold, where
\[ d_g(M^{t_0},M^{t_1}) = \left|\int_{t_0}^{t_1} abc\, dt\right|. \]

Asymptotically as $t\to-\infty$,
\begin{align*}
a &\simeq q\\
b &\simeq ke^{q^2t}\\
c &\simeq q
\end{align*}
This gives the asymptotic metric
\[ g\simeq k^2q^4e^{2q^2t}dt^2+q^2\sigma_1^2+k^2e^{2q^2t}\sigma_2^2+q^2\sigma_3^2, \]
and for $v=ke^{q^2t}$ this is just
\[ g\simeq (dv^2+v^2\sigma_2^2)+q^2(\sigma_1^2+\sigma_3^2). \]
In this coordinate, the endpoint $\xi=-\infty$ is at $v=0$, and we see that
\[ \lim_{t\to -\infty}d_g(p_0,M^t) = \int_{-\infty}^{t_0} abc dt = \int_0^{v_0} dv < \infty, \]
as claimed.

Now to understand the behavior at the $\eta$ side of the solution interval,
%notice that whenever $(a(t),b(t),c(t))$ is a solution of (\ref{E2ODEa})-(\ref{E2ODEc}), so is the scaled %curve $(ka(k^2t),b(k^2t),kc(k^2t))$.
%This suggest finding the dependence of $(a/c)$ and $b$.
we examine the derivative of $a/c$ with respect to $b$
\be\lb{a/c-b}
\frac{d(\frac{a}{c})}{db}=
\frac{2(\frac{a}{c})}{b}\left(\frac{1-(\frac{a}{c})^2(1+b^2+\ep(b))}{(\frac{a}{c})^2+1}\right),
\end{equation}
%Using $b$ as an independent variable we obtain the ODE
%\[\frac{d(\frac{a}{c})}{db}=\frac{2(\frac{a}{c})}{b}\left(\frac{1-(\frac{a}{c})^2(1+b^2)}{(\frac{a}{c})^2+1}\right). %\]
Ignoring the case $a/c=0$, this equation has nullcline
\[\frac{a}{c}=\sqrt{\frac{1}{1+b^2+\ep(b)}},\]
Since \Ref{ep}ii) easily implies that the nullcline is always decreasing, and our solution starts at $\frac{a}{c}=1$ when $b=0$, we have, using also \Ref{ep}i)
\[ \frac{a}{c}\ge\sqrt{\frac{1}{1+b^2+\ep(b)}} \]
%Recall also that we have already seen
%Likewise, the slope field is negative along $\frac{a}{c}=1$ so
%\[ \frac{a}{c}\le 1. \]
To find a better upper bound than $1$ for $a/c$, we consider the curve $\frac{a}{c}=\sqrt{\frac{k^2}{k^2+b^2}}$ and plug its expression into the slope field.
This gives slope
\[ \frac{2k^2+2b^2}{2k^2+b^2}(k^2-1)\frac{d}{db}\sqrt{\frac{k^2}{k^2+b^2}}
-2\Big(\fr{k^2}{k^2+b^2}\Big)^{3/2}\fr{\ep(b)}{b\Big(\fr{k^2}{k^2+b^2}+1\Big)}.\]
So for $k^2>2$, the slopes of the solutions along this curve are less, i.e. more negative,
than the slope of the curve. Since for our (non-equilibrium) solution, there is some $b_p$
in the range of $b$ for which one has $(\fr ac)(b_p)<1$, the graph of $a/c$
is below the graph of $\sqrt{\frac{k^2}{k^2+b^2}}$ for some $k=k_p>2$ at $b_p$, and hence for
all $b\ge b_p$. Therefore for $b\ge b_p$
\[ \sqrt{\frac{1}{1+b^2+\ep(b)}}\le \frac{a}{c}\le\sqrt{\frac{k_p^2}{k_p^2+b^2}} \]
Next we deduce an estimate for $ac$ in terms of $b$, for $b>b_p$. Using \Ref{ac-b} and the last inequalities, we have
%\[\frac{d(ac)}{db}=\frac{2(\frac{a}{c})^2(ac)b}{(\frac{a}{c})^2+1}. \]
%Thus
%\[ \frac{2b(2+b^2)}{(1+b^2)(4+b^2)}\le \frac{d(\ln ac)}{db}\le \frac{4b(1+b^2)}{(2+b^2)^2}, \]
\[ \frac{2(b+\fr{\ep(b)}b)}{2+b^2+\ep(b)} \le \frac{d(\ln(ac)}{db} \le 2\Big(b+\fr{\ep(b)}b\Big)\frac{k_p^2}{2k_p^2+b^2}, \]
and integrating this from $b=0$ to $b$, exponentiating and multiplying by $q^2$ gives
%\[ \frac{2}{3}\ln(4+b^2)+\frac{1}{3}\ln(1+b^2)-\frac{4}{3}\ln 2\le \ln \frac{ac}{q^2} \le \frac{2}{2+b^2} + 2\ln(2+b^2)-1-2\ln 2.\]
%\[ \ln\frac{2+b^2}{2} \le \ln\frac{ac}{q^2} \le 2\ln\frac{4+b^2}{4}. \]
%Therefore
%\[ ac \ge \frac{q^2}{2^\frac{4}{3}}(4+b^2)^\frac{2}{3}(1+b^2)^\frac{1}{3}. \]
\[ q^2\exp\left(\int_0^b\frac{2\Big(\tilde{b}+
\fr{\ep(\tilde{b})}{\tilde{b}}\Big)}{2+\tilde{b}^2+\ep(\tilde{b})}\,d\tilde{b}\right)
 \le ac \le q^2\exp\left(\int_0^b 2\Big(\tilde{b}+\fr{\ep(\tilde{b})}{\tilde{b}}\Big)
 \frac{k_p^2}{2k_p^2+\tilde{b}^2}\,d\tilde{b}\right). \]
Note here that our assumption \Ref{ep}ii) does imply the integrands are integrable.
Now since the integrand on the left hand side is larger than $b/(2+b^2)$, we in fact
have
\[
ac\ge q^2(2+b^2)^{1/2}
\]
for $b>b_p$. We thus finally arrive at an estimate for $b'$ as
\[ b' = \frac{b}{2}(a^2+c^2) =\frac{b}{2}(ac)\left(\frac{a}{c}+\frac{c}{a}\right).\]
Namely, for some positive constant $K_1$ and $b=b(t)>b_p$,
\[ b'\ge K_1b^3,\]
showing that $\eta$ is finite, and so $b\to\infty$ as $t\to\eta^-$. Additionally, we see that for some constant $K_2>0$, denoting $b_0:=\max\{b(t_0),b_p\}>0$, we have
\begin{equation}\label{eta_dist} \lim_{t\to \eta}d_g(p_0,M^t)= \int_{t_0}^\eta abc\, dt \ge\int_{b_0}^\infty \frac{2}{\frac{a}{c}+\frac{c}{a}}\,db \ge \int_{b_0}^\infty\frac{K_2}{b} db=\infty. \end{equation}
\end{proof}

\subsection{The Bolt}
Recall, for example from \cite{mr}, that ``attaching a bolt" refers to replacing a $4$-manifold with a cohomogeneity one action with only regular orbits over an open interval with one admitting
a similar action for the same group over a semi-closed interval with a $2$-dimensional singular orbit (the bolt) over the endpoint of the interval.
For the case at hand, the latter $4$-manifold can then be described as
\[
E(2)\times_{SO(2)}\mathbb{R}^2= (0,\infty)\times E(2)\ \amalg\ \{0\}\times \mathbb{R}^2 ,
\]
where the right $SO(2)$-action is $(g, (T,x))\to (Tg,g^{-1}x)$. So this is an instance of
the set-up described in the beginning of Section~\ref{sec:coho}.
\begin{prop}\label{prop:bolt}
The metric and K\"ahler form corresponding to solutions of \Ref{E2ODEa}-\Ref{E2ODEc} along the unstable curves of the equilibrium points $(q,0,q)$, $q>0$, defined on $(-\infty, \eta)$,
can be smoothly extended to $E(2)\times_{SO(2)}\mathbb{R}^2$, where the bolt fibers over $\xi=-\infty$.
\end{prop}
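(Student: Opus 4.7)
The argument is a standard smoothness analysis at a cohomogeneity-one codimension-two singular orbit, in the spirit of \cite{mr,d-s1}. The approach has three stages: introduce a geodesic radial coordinate $v$ on the fiber vanishing at the bolt; use the reflection symmetry of the ODE system to show that $a,b,c$ are smooth functions of $v$ with the correct parity; and invoke the $SO(2)$-equivariant slice theorem to verify smooth extension of $g$ and $\omega$ to the disk bundle $E(2)\times_{SO(2)}\mathbb{R}^2$.

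\textbf{Parity of $a,b,c$ in $v$.} Define $v$ by $dv=abc\,dt$, normalized so that $v\to 0^+$ as $t\to -\infty$. By assumption \Ref{ep}iii), $\epsilon(b)$ admits an even smooth extension to all of $\mathbb{R}$, which makes the system \Ref{E2ODEa}-\Ref{E2ODEc} invariant under the involution $b\mapsto -b$ with $a,c$ unchanged. The equilibrium $(q,0,q)$ is fixed by this involution, and by the uniqueness clause of the Center Manifold Theorem its one-dimensional unstable curve is fixed as well. Hence along this curve $a(b)$ and $c(b)$ are smooth even functions of $b$. Next, from $b'=\tfrac12 b(a^2+c^2)$ and $v'=abc$ one computes $dv/db=2ac/(a^2+c^2)$, an even smooth function of $b$ that equals $1$ at $b=0$. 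Integration yields $v$ as a smooth odd function of $b$ with $v(b)=b+O(b^3)$; inverting near the origin, $b(v)=v+O(v^3)$ is a smooth odd function of $v$, and by composition $a(v)$ and $c(v)$ are smooth even functions of $v$ with $a(0)=c(0)=q$.

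\textbf{Smooth extension of $g$ and $\omega$.} In the tubular-neighborhood identification $E(2)\times_{SO(2)} D^2$, the slice at a bolt point is the $D^2$-fiber on which the $SO(2)$-isotropy acts by rotation; introduce Cartesian fiber coordinates $(y_1,y_2)$ with $v^2=y_1^2+y_2^2$, so that $\sigma_2=d\phi$ is the angular dual. The fiber contribution $dv^2+b(v)^2\sigma_2^2 = dv^2 + v^2 h(v^2)\,d\phi^2$, with $h(v^2)=(b(v)/v)^2$ smooth and $h(0)=1$, is the familiar smooth rotation-symmetric metric on $D^2$ via the Schwarz criterion for $SO(2)$-invariant smooth tensors. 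The remaining summand $a(v)^2\sigma_1^2+c(v)^2\sigma_3^2$ has coefficients that are smooth functions of $v^2$ multiplying $SO(2)$-invariant quadratic forms on the bolt, restricting at $v=0$ to the invariant flat metric $q^2(\sigma_1^2+\sigma_3^2)$; hence $g$ extends smoothly by the $E(2)$-equivariant slice theorem. For the Kähler form $\omega = c(v)\,dv\wedge\sigma_3 + a(v)b(v)\,\sigma_1\wedge\sigma_2$, the same parity analysis applies: $c(v)\,dv=dF(v)$ for the smooth odd primitive $F(v)=\int_0^v c$, while $a(v)b(v)$ is smooth odd in $v$, so that in fiber Cartesian coordinates the smoothness combines with the $SO(2)$-invariance of $\omega$ to yield an invariant smooth 2-form on the slice and hence, by the same slice-theorem argument, on the entire disk bundle.

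\textbf{Main obstacle.} The principal technical step is the parity claim of Step~2, which rests crucially on the even extension of $\epsilon$ guaranteed by \Ref{ep}iii) together with the uniqueness of the unstable manifold furnished by the Center Manifold Theorem. Without the reflection symmetry of the ODE, one could not rule out odd-order contamination in $a(b)$ and $c(b)$ that would obstruct the $SO(2)$-equivariant smooth extension. Once the parities are established, the smooth extension of both $g$ and $\omega$ is classical.
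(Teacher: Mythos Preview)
Your parity argument in Step~2 is clean and essentially matches the paper's approach (the paper rewrites the system in the geodesic coordinate $r$ and observes the $(r,b)\mapsto(-r,-b)$ symmetry directly, but your center-manifold uniqueness argument works just as well). The problem lies in Step~3.

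You assert that $a(v)^2\sigma_1^2+c(v)^2\sigma_3^2$ is a sum of ``smooth functions of $v^2$ multiplying $SO(2)$-invariant quadratic forms on the bolt.'' This is false: the isotropy $SO(2)$ is generated by $X_2$, and in $E(2)$ the adjoint action $\exp(\theta X_2)$ \emph{rotates} $X_1$ and $X_3$ into one another (the paper states this explicitly: both weights $a_1,d_1$ equal $1$). Hence $\sigma_1^2$ and $\sigma_3^2$ are \emph{not} individually $SO(2)$-invariant; only $\sigma_1^2+\sigma_3^2$ is. The non-invariant piece $\sigma_1^2-\sigma_3^2$ lies in the weight-$2$ representation, and the Verdiani--Ziller criterion (table~B of \cite{vz}) demands that its coefficient $a^2-c^2$ be of the form $r^2\phi_2(r^2)$, not merely $\phi(r^2)$. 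The same issue arises for $\omega$: neither $dv\wedge\sigma_3$ nor $\sigma_1\wedge\sigma_2$ is $SO(2)$-invariant, and the paper shows that the relevant weight-$2$ coefficient $cr-ab$ must vanish to order $r^3$.

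Now, the good news is that all the ingredients you have established---$a,c$ even with $a(0)=c(0)=q$, $b$ odd with $b(v)=v+O(v^3)$---do in fact imply these higher-order vanishing conditions (e.g.\ $(cr-ab)'(0)=c(0)-a(0)b'(0)=q-q=0$, and oddness then forces $cr-ab=O(r^3)$). But your argument as written does not recognize that these are the conditions to be checked, nor why; it rests on an invariance claim that is simply incorrect. To fix the proof you must carry out the weight-space decomposition under the isotropy action and verify the order-of-vanishing conditions, as the paper does following \cite{vz}.
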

\begin{proof}
Consider $M=E(2)\times_{SO(2)}\mathbb{R}^2$.  This has a left action by $E(2)$ with regular orbit $E(2)$ and a singular orbit $E(2)/SO(2)$.
For any $E(2)$-invariant metric $g$ on $M$, with $r$ the distance along a geodesic perpendicular to the singular orbit,
\[ g = dr^2+g_r. \]
For a metric $g$ of the form \Ref{bianchiAmet1}, let $r=\int_{-\infty}^t a(s)b(s)c(s)\,ds$, then
\[ g = dr^2+a^2\sigma_1^2+b^2\sigma_2^2+c^2\sigma_3^2. \]
The ODE's \Ref{E2ODEa}-\Ref{E2ODEc} in this coordinate become
\begin{align}
\label{rODEa}\frac{da}{dr}&=\frac{a}{2}\left(-\frac{a}{bc}+\frac{c}{ab}\right),\\
\label{rODEb}\frac{db}{dr}&=\frac{1}{2}\left(\frac{a}{c}+\frac{c}{a}\right),\\
\label{rODEc}\frac{dc}{dr}&=\frac{c}{2}\left(\frac{a}{bc}-\frac{c}{ab}+2\frac{ab}{c}
+2\fr{a\ep(b)}{bc}\right).
\end{align}
From these it is seen that if $\ep(b)$ is extended as an even function of $b$, then $a,b,$ and $c$ can be extended at $r=0$ so that $a$ and $c$ are even and $b$ is odd.
Following the notations of Verdiani and Ziller \cite{vz}, the tangent space for $r\neq 0$ splits as
\[ T_p M = \mathbb{R}\partial_r\oplus \mathfrak{k}\oplus\mathfrak{m}, \]
where
\[\mathfrak{k}=\mathrm{span}\{X_2\}, \]
\[ \mathfrak{m}=\mathrm{span}\{X_1,X_3\}=:\ell_1, \]
and we set
\[ V=\mathrm{span}\{\partial_r,X_2\}=:\ell_{-1}'. \]
Since $\exp(\theta X_2)$ acts on both $V$ and $\mathfrak{m}$ as a rotation by $\theta$, we have weights $a_1=d_1=1$ (with the former not to be confused with its use in previous sections).
The smoothness conditions for $V$ is that $b$ can be extended to an odd function and $b'(0)=1$.
Since we know that $b$ can be extended to be odd, we just check from \Ref{rODEb} that
\[ \left.\frac{db}{dr}\right|_{r=0}=\frac{1}{2}\left(\frac{q}{q}+\frac{q}{q}\right)=1.\]
As $\ell'_{-1}$ and $\ell_1$ are orthogonal, the smoothness conditions in table C of
\cite{vz} are automatically satisfied, while those in
table B there, are
\begin{align}\label{smooth1}a^2+c^2&=\phi_1(r^2),\\
\label{smooth2}a^2-c^2&=r^2\phi_2(r^2),
\end{align}
for some smooth functions $\phi_1$ and $\phi_2$.
Now to see that \Ref{smooth1} is satisfied, note that
\[ a^2+c^2=2ac\frac{db}{dr}.\]
Since $a$, $c$, and $\frac{db}{dr}$ are even, it just remains to check \Ref{smooth2}.
As $a$, $c$ are even while $b$ is odd, $a/c$ is even as a function of $b$.
We have to second order
\[ a^2-c^2=c^2\left(\frac{a^2}{c^2}-1\right)\propto b^2c^2, \]
and since $b(0)=0$ and $\frac{db}{dr}|_{r=0}=1$, we get that $g$ extends to a smooth metric on $M$.

Finally we check that the K\"ahler form also extends smoothly to the singular orbit at $r=0$.
Following Verdiani and Ziller we analyze the eigenspaces for the action of $SO(2)$ on $T_pM$.
We find that $\partial_r+\frac{i}{r}X_2$ is an eigenvector with eigenvalue $e^{ia_1\theta}$, and
$X_1+iX_3$ is an eigenvector with eigenvalue $e^{id_1\theta}$, and likewise for their complex conjugates.
Dualizing gives eigenspaces of $T^*_pM$: $dr-ir \sigma_2$ has eigenvalue $e^{ia_1\theta}$, and $\sigma_1-i\sigma_3$ has eigenvalue  $e^{id_1\theta}$.
Thus the eigenspaces of $\Lambda^2T^*_pM$ are given by
\begin{align*}
E_1 &= \mathrm{span}\{r dr\wedge \sigma_2, \sigma_1\wedge\sigma_3\}\\
E_{e^{i(a_1-d_1)\theta}}&=\mathrm{span}\{dr\wedge\sigma_1+r\sigma_2\wedge\sigma_3+i(dr\wedge\sigma_3+r\sigma_1\wedge\sigma_2)\}\\
E_{e^{i(a_1+d_1)\theta}}&=\mathrm{span}\{dr\wedge\sigma_1-r\sigma_2\wedge\sigma_3+i(dr\wedge\sigma_3-r\sigma_1\wedge\sigma_2)\}\\
&\qquad
\end{align*}
According to \cite{vz}, the smoothness condition is just the equivariance condition $\omega(e^{a_1\theta}p)=\exp(\theta X_2)^*\omega$.
This requires that the coefficient of
\begin{align}
E_1 & \text{ is } \phi_1(r^2), \\
E_{e^{\pm i(a_1-d_1)\theta}}& \text{ is } r^{\frac{|a_1-d_1|}{a_1}}\phi_2(r^2),\\
E_{e^{\pm i(a_1+d_1)\theta}}& \text{ is } r^{\frac{|a_1+d_1|}{a_1}}\phi_3(r^2).
\end{align}
But we have
\begin{align*}\omega&=cdr\wedge\sigma_3+ab\sigma_1\wedge\sigma_3\\
&=\frac{c}{2}[(dr\wedge\sigma_3+r\sigma_1\wedge\sigma_2)+(dr\wedge\sigma_3-r\sigma_1\wedge\sigma_2)]\\
&\qquad + \frac{ab}{2r}[(dr\wedge\sigma_3+r\sigma_1\wedge\sigma_2)-(dr\wedge\sigma_3-r\sigma_1\wedge\sigma_2)]\\
&=\frac{cr+ab}{2r}(dr\wedge\sigma_3+r\sigma_1\wedge\sigma_2) \\
&\qquad + \frac{cr-ab}{2r}(dr\wedge\sigma_3-r\sigma_1\wedge\sigma_2),
\end{align*}
and therefore the smoothness conditions can now be written as
\begin{align*}
cr+ab &= r\phi_2(r^2), \\
cr-ab &= r^3\phi_3(r^2). \\
\end{align*}
The first of these is clear; for the second, expand to get $a/c= 1+\al b^2+\mathcal{O}(b^4)$
for some constant $\al$ and $b= r+\mathcal{O}(r^3)$, so
\[ cr\left(1-\frac{ab}{cr}\right)=cr\left(1-\frac{b+\al b^3
+\mathcal{O}(b^5)}{r}\right)=r^3\phi_3(r^2).\]
Therefore $\omega$ extends as a smooth form on all of $M$.
\end{proof}

\subsection{Completeness}
\begin{prop}\label{prop:complete}
For the metrics of Proposition~\ref{prop:bolt},
all finite length curves remain inside some compact set.
\end{prop}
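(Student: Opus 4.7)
\textit{Proof plan.} The strategy will parallel that of Proposition~\ref{escape1}, but with the new wrinkle that the action has a singular orbit. First, bound the transverse coordinate $r$ of a finite length curve $\gamma$: writing the metric in the form $dr^2+g_r$ as in Proposition~\ref{prop:bolt}, the function $r$ is $1$-Lipschitz on $(M,g)$. Proposition~\ref{prop:estimates} shows $\lim_{t\to\eta}d_g(p_0,M^t)=\infty$, so $r\circ\gamma$ is bounded above by some $R_0<\infty$, and $\gamma$ lies in the closed tubular neighborhood $M_{R_0}=\{r\le R_0\}$ of the bolt $N$.

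Next, treat the portion of $\gamma$ lying in the regular part $\{r_0\le r\le R_0\}$ for $r_0>0$. The relation $[Z,X_1]=0$ from \Ref{X123} (i.e.\ $\Gamma_{z1}^1=0$) shows that $X_1$ and $Z$ span an abelian subalgebra in $\mathfrak{e}(2)$; setting $X_1=\partial_u$ and $Z=\partial_v$, one finds $Y_1=v\partial_u-u\partial_v+\partial_w$ satisfies the remaining relations, and the dual coframe is $\sigma_1=du-v\,dw$, $\sigma_2=dw$, $\sigma_3=dv+u\,dw$. With $(u,v,w,r)$ as coordinates on an open dense subset of $M_{R_0}\setminus N$, expand $\gamma'(s)$ in this coframe and apply Cauchy--Schwarz as in Proposition~\ref{escape1} to obtain
\[
L(\gamma)\ge b_{\min}|\Delta w|,\qquad L(\gamma)\ge a_{\min}\Big|\Delta u-\!\textstyle\int v\,w'\,ds\Big|,\qquad L(\gamma)\ge c_{\min}\Big|\Delta v+\!\textstyle\int u\,w'\,ds\Big|,
\]
with uniform positive lower bounds $a_{\min},b_{\min},c_{\min}$ on $[r_0,R_0]$. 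Subdividing $\gamma$ into finitely many subsegments of length smaller than $b_{\min}$ and bootstrapping these inequalities bounds each of $u,v,w$ on $\gamma\cap\{r\ge r_0\}$.

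Finally, handle $\gamma$ in the tubular neighborhood $\{r<r_0\}$ of the bolt. The bolt $N=E(2)/SO(2)\cong\mathbb{R}^2$, endowed with the restriction of $g$, is homogeneous under $E(2)$ with compact isotropy, hence a complete Riemannian manifold. The smooth extension of $g$ provided by Proposition~\ref{prop:bolt} identifies $\{r\le r_0\}$ with a smooth disk bundle $\pi\colon\{r\le r_0\}\to N$, and $\pi$ is smooth, proper, and Lipschitz with constant depending only on $r_0$ and the first-order behaviour of $a,b,c$ at $r=0$. Therefore $\pi\circ\gamma|_{\{r<r_0\}}$ is a finite-length curve in the complete manifold $N$, which by Hopf--Rinow remains in a compact subset $K\subset N$; since the fibers of $\pi$ are compact disks of radius $r_0$, the preimage $\pi^{-1}(K)$ is compact and contains $\gamma|_{\{r<r_0\}}$. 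Combining the three steps, $\gamma$ is contained in a compact subset of $M$.

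The main obstacle is the bootstrap estimate in the regular region: the cross-terms $v\,dw$ and $u\,dw$ in $\sigma_1$ and $\sigma_3$ couple the bounds on $\Delta u$ and $\Delta v$ to the sup-norms of $v$ and $u$, giving a quadratic rather than linear feedback in the length; this is why subdividing $\gamma$ is essential. A secondary technical point, verifying the Lipschitz property of $\pi$ across the bolt, is handled by the smoothness conditions \Ref{smooth1}-\Ref{smooth2} together with the $E(2)$-equivariance of the construction.
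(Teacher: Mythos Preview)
The paper defers the proof entirely to \cite{mr}, so there is no argument here to compare against directly; the Heisenberg analog, Proposition~\ref{escape1}, is the closest template. Your plan is correct in outline and would close, but it is more elaborate than needed, and the two issues you flag as obstacles are artifacts of the set-up rather than genuine difficulties.

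First, there is no need to split off a tubular neighborhood of the bolt. Along the unstable curve of the equilibrium $(q,0,q)$ the functions $a,c$ are increasing (Case~3 of Proposition~\ref{prop:solutions}) and tend to $q>0$ as $r\to 0^+$; hence $a,c\ge q>0$ on all of $\{0\le r\le R_0\}$, not just on $\{r_0\le r\le R_0\}$. Your Step~3 and the Lipschitz verification for $\pi$ are therefore unnecessary.

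Second, the quadratic feedback you identify is an artifact of working with $u,v$ rather than with the $SO(2)$-invariant combinations. With your frame $X_1=\partial_u$, $Z=\partial_v$, $Y_1=v\partial_u-u\partial_v+\partial_w$, the functions $P=u\cos w-v\sin w$ and $Q=u\sin w+v\cos w$ descend to the bolt $E(2)/SO(2)\cong\mathbb{R}^2$, and a direct computation gives $dP^2+dQ^2=\sigma_1^2+\sigma_3^2$. Hence on $\{r\le R_0\}$ one has $|\gamma'|^2\ge a^2\sigma_1(\gamma')^2+c^2\sigma_3(\gamma')^2\ge q^2\big((P\circ\gamma)'^2+(Q\circ\gamma)'^2\big)$, so $(P,Q)$ changes by at most $L(\gamma)/q$ along $\gamma$---no bootstrap, no subdivision. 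Since the isotropy $SO(2)$ is a circle, the $w$-direction is compact and needs no bound at all; the map $(P,Q,r):M\to\mathbb{R}^2\times[0,\infty)$ is proper (fibers are points or circles), and $(P,Q,r)\circ\gamma$ lies in a bounded set, finishing the argument. This is presumably what \cite{mr} does; your route reaches the same conclusion but through an avoidable detour.
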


The proof is exactly the same as in \cite{mr}.
This completes the proof of Theorem~\ref{complete}.

\appendix
\section{The K\"ahler condition}

We review here the K\"ahler condition in terms of the decomposition~\Ref{decomp}.
For a K\"ahler form $\om$ on a manifold $M^m$, $m>1$, the vanishing of $d\om$ on orthonormal
frame vector fields $\{e_{2j-1}, e_{2j}\}\subset\mathcal{H}_j$, $j=1,\ldots m$ amounts to
\[
{2m\choose 3}=2m(m-1)+2^3{m\choose 3}
\]
independent equations, with $2m(m-1)$ involving frame fields only from to two $J$-invariant
subbundles $\mathcal{H}_i$, $\mathcal{H}_j$, $i\ne j$ while the remaining
$8{m\choose 3}$ equations involve vector fields belonging to three
subbundles $\mathcal{H}_i$, $\mathcal{H}_j$, $\mathcal{H}_k$, $i\ne j\ne k\ne i$
(and ${m\choose 3}$ is taken to be zero if $m=2$).
For each fixed {\em ordered} pair $i$, $j$ there are two equations of the first of
these two types, the first having the form
\be\lb{2-subs}
\hat{e}_{2j}([e_{2i-1},e_{2i}])+\hat{e}_{2i-1}([e_{2i-1},e_{2j-1}])+\hat{e}_{2i}([e_{2i},e_{2j-1}])=0
\end{equation}
while the second obtained by exchanging $e_{2j-1}$ with $e_{2j}$ (and their hatted counterparts)
in the first one. A typical example of the second type has the form
\be\lb{3-subs}
\hat{e}_{2k}([e_{2i-1},e_{2j-1}])+\hat{e}_{2i}([e_{2j-1},e_{2k-1}])+\hat{e}_{2j}([e_{2k-1},e_{2i-1}])=0.
\end{equation}
All these facts are easily obtained from the coboundary formula for the exterior derivative
of a $2$-form. Note that the {\em sum} ${2m\choose 3}+2m(m-1)$, where the second term now corresponds to the number of  vector equations needed for integrability of the complex structure,
yields a quantity still smaller than the $2m{2m\choose 2}$ unknowns needed to specify all Lie
brackets of pairs of frame vector fields.

For the frame with Lie brackets given by the first paragraph of subsection~\ref{construct},
it is clear that equations of type \Ref{3-subs} will hold. The same is true for equations of type
\Ref{2-subs}, which require knowledge of $[\Gamma(\mathcal{H}_i),\Gamma(\mathcal{H}_j)]$ for
$i=j$, as opposed to integrability of the complex structure, which only required such bracket
relations with $i\ne j$. For example, we have
\begin{align*}
\hat{\yy}_i([\kk,\tT])&+\hat{\kk}([\kk,\xx_i])+\hat{\tT}([\tT,\xx_i])\\
&=\hat{\yy}_i(L(\kk+\tT))+\hat{\kk}(A_i\xx_i+B_i\yy_i)+\hat{\tT}(E_i\xx_i+F_i\yy_i)\\
&=0,\\
\hat{\tT}([\xx_i,\yy_i])&+\hat{\xx}_i([\xx_i,\kk])+\hat{\yy}_i([\yy_i,\kk])\\
&=\hat{\tT}(N_i(\kk+\tT))+\hat{\xx}_i(-A_i\xx_i-B_i\yy_i)+\hat{\yy}_i(-C_i\xx_i-D_i\yy_i)\\
&=N_i-A_i-D_i=0,
\end{align*}
etc. This provides another proof for Prop.~\ref{kah}, and explains the need to require
relations~\Ref{rels2}. One can also draw some limited conclusions regarding possible Lie bracket
values for K\"ahler structures. For example, if $m=2$, we see that with an indexing
$\mathcal{H}_0=\mathrm{span}(\kk,\tT)$, $\mathcal{H}_1=\mathrm{span}(\xx_{\1},\yy_{\1})$
and shear relations as in \Ref{brack2}-\Ref{brack3} and \Ref{rels1},
$[\Gamma(\mathcal{H}_0),\Gamma(\mathcal{H}_0)]$ cannot have a component in $\mathcal{H}_1$,
whereas $[\Gamma(\mathcal{H}_1),\Gamma(\mathcal{H}_1)]$ can have a component in $\mathcal{H}_1$,
without preventing the metric from being K\"ahler.

\section*{Acknowledgements}
We thank Ramiro Lafuente, whose message alerted us to an incorrect description in an earlier version of the manifold in Theorem~\ref{sol-Heis2}.  We acknowledge Ronan Conlon for pointing us at a late stage to
the reference \cite{r} and for some very useful exchanges, and Ali Maalaoui for remarks regarding holomorphic sectional curvature.
%ACH manifolds in relation to \cite{bh}.

\end{document}